\documentclass[12pt]{amsart}
\usepackage[T1]{fontenc}
\usepackage[utf8]{inputenc}
\usepackage{lmodern}
\usepackage{amsmath,amssymb,amsthm,mathrsfs}
\usepackage{microtype}
\usepackage[left=1.06in,right=1.06in,top=1.015in,bottom=1.03in]{geometry}
\usepackage{hyperref}
\hypersetup{colorlinks=true,linkcolor=black,citecolor=black,urlcolor=black}

\newtheorem{theorem}{Theorem}[section]
\newtheorem{lemma}[theorem]{Lemma}
\newtheorem{proposition}[theorem]{Proposition}
\newtheorem{corollary}[theorem]{Corollary}
\theoremstyle{remark}
\newtheorem{remark}[theorem]{Remark}

\newcommand{\D}{\mathbb D}
\newcommand{\C}{\mathbb C}
\newcommand{\R}{\mathbb R}
\newcommand{\psh}{\mathrm{PSH}}
\newcommand{\Ree}{\operatorname{Re}}
\newcommand{\LL}{\mathcal L}

\title[Kobayashi completeness and a smooth counterexample]{Kobayashi completeness of pseudoconvex domains: finite type and a smooth counterexample}
\author{Armen Edigarian}
\address{Faculty of Mathematics and Computer Science, Jagiellonian University, Prof. St. \L ojasiewicza 6, 30-348 Krak\'ow, Poland; ORCID: \href{https://orcid.org/0000-0003-1789-1287}{0000-0003-1789-1287}}
\email{armen.edigarian@uj.edu.pl}
\subjclass[2020]{Primary 32F45; Secondary 32T20, 32U05}
\keywords{Kobayashi distance, Carath\'eodory distance, completeness, finite type, pseudoconvex domain, smooth boundary, real-analytic boundary}

\begin{document}
\begin{abstract}
We prove that every bounded pseudoconvex domain in $\C^N$ with
$C^\infty$ boundary of finite D'Angelo one-type is Kobayashi complete. As a separate consequence of our criterion of completeness and an estimate of Diederich--Forn\ae ss, we prove that every bounded
pseudoconvex domain with real-analytic boundary is Kobayashi complete. We then show that smoothness alone does not imply completeness: there exists a bounded pseudoconvex domain in $\C^3$ with $C^\infty$ boundary
and a global smooth plurisubharmonic defining function whose Kobayashi and Carath\'eodory distances are incomplete. 
\end{abstract}
\maketitle
\section{Introduction}
The Kobayashi distance of a bounded domain need not be complete, even
when the domain is pseudoconvex.  A natural question is which boundary
conditions force completeness.  For strongly pseudoconvex domains,
completeness follows from the estimates of Graham \cite{Graham}.  In
two complex dimensions, Bedford and Forn\ae ss \cite{BedfordForn}
constructed holomorphic peak functions at smooth pseudoconvex boundary
points of finite type; in particular, bounded smooth pseudoconvex
domains of finite type in $\C^2$ are complete for both the
Carath\'eodory and Kobayashi distances.

Jarnicki and Pflug \cite[Section~2.8, p.~123]{JarnickiPflugRevisited}
ask the corresponding completeness questions under a finite-type
assumption.  We answer the Kobayashi question in arbitrary dimension.
We use finite type in the sense of D'Angelo \cite{DAngelo}.  If
$p\in\partial\Omega$ and $r$ is a smooth local defining function, we
write
\[
 \Delta_1(p)=\sup_\gamma
 \frac{\operatorname{ord}_0(r\circ\gamma)}
      {\operatorname{ord}_0(\gamma-p)},
\]
where $\gamma$ ranges over nonconstant holomorphic curve germs with
$\gamma(0)=p$.  An order of vanishing may be infinite.

\begin{theorem}\label{thm:finite-type}
Every bounded pseudoconvex domain in $\C^N$ with $C^\infty$
boundary of finite D'Angelo type is Kobayashi complete.
\end{theorem}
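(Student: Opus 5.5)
Our plan rests on the abstract's ``criterion of completeness'': Kobayashi completeness should follow from a local plurisubharmonic barrier near each boundary point, together with a suitable Hölder-type boundary behaviour. Concretely, we aim to show that for each $p\in\partial\Omega$ there is a neighbourhood $U$ and a negative plurisubharmonic function $\varphi$ on $\Omega$ with
\[
 -C\,\delta(z)^{\eta}\le \varphi(z)\le -c\,\delta(z)^{\eta},
\]
together with a lower bound on the Levi form of $\varphi$, or of $-(-\varphi)^{\alpha}$. Here $\delta$ is the boundary distance and $\eta,c,C>0$ are constants. Such a bound yields an infinitesimal estimate $\kappa_\Omega(z;X)\ge c\,|X|\,\delta(z)^{-\epsilon}$ for some $\epsilon>0$.

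On its own this estimate gives only that Kobayashi lengths of curves running into the boundary diverge, provided the exponent is at least $1$. Instead of relying on that, we use the mechanism of the Sibony metric. If $-\log(-\varphi)$ is plurisubharmonic, with gradient in $\varphi$-norm bounded and with the barrier exhausting, then the Sibony pseudometric dominates
\[
 \frac{|\partial\varphi(z)\cdot X|}{|\varphi(z)|}.
\]
Integrating this along a curve $\gamma$ tending to $\partial\Omega$ gives a lower bound by $\log|\varphi|$ evaluated at the endpoints, and this diverges. Hence every Kobayashi-Cauchy sequence stays in a compact subset, which is completeness.

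The key input is Catlin's theorem. For a smooth bounded pseudoconvex domain of finite D'Angelo type, Catlin constructs, for every $\delta$-neighbourhood $S_\delta=\{-\delta<r<0\}$, plurisubharmonic functions $\lambda_\delta$ with $|\lambda_\delta|\le 1$ and $i\partial\bar\partial\lambda_\delta\ge c\,\delta^{-2\epsilon}\,i\partial\bar\partial|z|^2$ on $S_\delta$. Here $\epsilon=\epsilon(\text{type})>0$ (property $(P)$ with subelliptic gain). We would use these to build a bounded plurisubharmonic $\psi$ on $\Omega$ whose Levi form grows like $\delta^{-2\epsilon}$ near $\partial\Omega$, via a dyadic sum $\sum_k 2^{-k\epsilon'}\lambda_{2^{-k}}$. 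From $\psi$ and Forn\ae ss--Sibony localization we then obtain a lower bound $\kappa_\Omega(z;X)\gtrsim \delta(z)^{-\epsilon''}|X|$ in all directions.

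The step we expect to be the main obstacle is the normal direction. A growth $\delta^{-\epsilon}$ with $\epsilon<1$ is integrable along normal segments, so the infinitesimal bound alone does not give completeness. To handle it we would combine two ingredients. The first is the Diederich--Forn\ae ss bounded exhaustion $\rho=-(-r e^{-K\psi})^{\eta}$, which is plurisubharmonic for small $\eta$. The second is the pointwise tangential/normal splitting: in the normal direction use $\kappa_\Omega(z;\nu)\ge |\nu|/(2\delta(z))$, which holds near smooth boundary points because a smooth pseudoconvex domain is locally contained in a half-space-like region after a holomorphic peak argument. Specifically, we would take a local holomorphic support function $f$ with $\Ree f<0$ on $\Omega\cap U$ and $|f|\approx\delta$ along normals; its existence in the finite-type case requires care, and if it is unavailable we fall back on the Sibony argument using $-\log(-\rho)$. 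A curve from a fixed point to the boundary must then either travel a distance $\gtrsim\delta^{1/2}$ tangentially, which is costly by the $\delta^{-\epsilon}$ bound once iterated dyadically, or approach normally, which is costly because of the $\log$ divergence. Verifying that these two costs combine, uniformly over dyadic shells, into a divergent lower bound for the length is the crux of the argument.
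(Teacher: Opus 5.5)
\textbf{The normal-direction step is a genuine gap.}

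Your first step is a reasonable alternative to Proposition~\ref{prop:finite-type-power}, provided you add depth control. A Hessian bound at the centre of a disc does not by itself bound $\kappa_\Omega$. You need Catlin's functions summed dyadically, so that the Hessian is bounded below by a negative power of $\delta_\Omega(w)$ at every point $w$ near $p$. You also need a barrier showing that the disc spends most of a fixed subdisc at depth not much larger than $\delta$. The paper proves the power bound without Catlin, using the disc mean estimate, finite jets, a Hermitian polarization and D'Angelo's type--multiplicity inequality, with explicit exponent $\varepsilon=1/(2m(m+1))$, $m>T^N$.

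The gap is the step you yourself call the crux, and none of your three tools for it works.

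\emph{(i) The Sibony mechanism is false as stated, and it never uses finite type.} In Theorem~\ref{thm:main}, $\varphi=r=|w|^2+\rho-1$ is a smooth negative plurisubharmonic exhaustion with bounded gradient. Moreover $i\partial\bar\partial(-\log(-r))=i\partial\bar\partial r/(-r)+i\partial r\wedge\bar\partial r/r^2\ge i\partial r\wedge\bar\partial r/r^2$, so every hypothesis you list holds. Yet Remark~\ref{rem:dichotomy} gives discs with $\kappa_\Omega(z_n;\varphi_n'(0))\le1$ while $|\partial r(z_n)\varphi_n'(0)|/(-r(z_n))\asymp n^2\to\infty$. Since the Sibony metric is dominated by $\kappa_\Omega$, it cannot dominate $|\partial r\cdot X|/(-r)$ there. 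The Levi form of the unbounded function $-\log(-\varphi)$ dominates $|\partial\varphi\cdot X|^2/\varphi^2$ trivially, but Sibony-type lower bounds need bounded functions.

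\emph{(ii) A finite-type pseudoconvex domain need not be locally contained in a half-space after a holomorphic change of coordinates.} For the Kohn--Nirenberg domain in $\C^2$, every germ of complex curve through the origin enters the domain. Hence there is no holomorphic $f$ with $f(p)=0$ and $\Ree f<0$ on $\Omega\cap U$.

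\emph{(iii) Even a pure normal bound cannot be combined with $\kappa_\Omega\gtrsim\delta^{-\epsilon}\|X\|$.} Since $\kappa_\Omega(z;\cdot)$ is not a norm, the two bounds say nothing better than the latter about slanted vectors $\alpha\nu+Y$. These are exactly the vectors $\varphi_n'(0)=(0,-b_nn^2,\delta_nn^2/4)$ of the counterexample. Your dyadic count also goes the wrong way: a tangential displacement $\delta_k^{1/2}$ at depth $\delta_k=2^{-k}$ costs only about $\delta_k^{1/2-\epsilon}$, which is summable.

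\textbf{What is missing.} You need an upper bound for $|\partial r(z)f'(0)|$ over all discs $f$ with $f(0)=z$, which handles every direction at once. The paper obtains it with a logarithmic loss (Proposition~\ref{prop:self-general}).
\begin{itemize}
\item The power bound and the \emph{linear} mean estimate $\frac1{2\pi}\int_0^{2\pi}g(te^{i\theta})\,d\theta\le Cg(0)$ of Lemma~\ref{lem:disc-mean} confine $f(\D_{a/2})$ to a ball of radius $C\delta^\varepsilon$. A H\"older exhaustion, which only controls means of $g^\eta$, would not suffice for this argument.
\item Compose the Taylor polynomial of $r$ of degree $q-1$, with $q\varepsilon\ge1$, with the Taylor polynomial of $f$ of degree $\approx C_0\log(A/\delta)$. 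This gives a real polynomial $Q$ with $|Q-r\circ f|\le C\delta$ and $\partial_\zeta Q(0)=\partial r(z)f'(0)$.
\item The first Fourier coefficient $B(t)$ of $\theta\mapsto Q(te^{i\theta})$ is a polynomial of degree $O(\log(A/\delta))$. It is bounded by $C\delta$ on a fixed interval, and $B'(0)=\partial r(z)f'(0)$.
\item Bernstein's inequality then gives $|\partial r(z)f'(0)|\le C\delta\log(A/\delta)$, hence \eqref{eq:crit-16}. Proposition~\ref{prop:criterion} applied to $\log\log(A/(-r))$ gives completeness.
\end{itemize}
This yields only $\log\log$ growth of $k_\Omega(z_0,\cdot)$. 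Your $\log(1/\delta)$ target is stronger than what the paper proves, and nothing in your proposal supplies it.
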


The proof separates two issues which are often intertwined.  Section~\ref{sec:criteria}
contains the completeness criterion and the normal self-improvement argument.  If $r<0$ is a defining
function and one has a power estimate
\[
 \kappa_\Omega(z;X)\gtrsim
 \frac{\|X\|}{\delta_\Omega(z)^\varepsilon},
\]
then the analytic-disc argument of Section~\ref{sec:criteria} improves it
in the complex-normal direction to
\begin{equation}\label{eq:intro-normal}
 \kappa_\Omega(z;X)\ge
 c\,
 \frac{\left|\sum_{j=1}^N r_{z_j}(z)X_j\right|}
 {(-r(z))\log(A/(-r(z)))}
\end{equation}
near the boundary.  Consequently, for each fixed $z_0\in\Omega$,
\begin{equation}\label{eq:intro-distance}
 k_\Omega(z_0,z)\ge
 c\log\log\frac{A}{\delta_\Omega(z)}-C_{z_0}.
\end{equation}
The proof is first given when $r$ is plurisubharmonic.  In that case
$-r\circ f$ is positive superharmonic for every analytic disc
$f:\D\to\Omega$, so the required mean estimate is immediate.  For a
general smoothly bounded pseudoconvex domain, the same proof goes
through after replacing this superharmonicity by a uniform disc mean
estimate derived directly from pseudoconvexity.

The same criterion also gives a short proof of the real-analytic case.
Diederich and Forn\ae ss \cite[Theorem~4, p.~577]{DiederichFornaess79}
proved a power lower bound for the Kobayashi--Royden metric on bounded
pseudoconvex domains with real-analytic boundary.  Combining their
estimate with the self-improvement in Section~\ref{sec:criteria} gives:

\begin{corollary}\label{cor:real-analytic-intro}
Every bounded pseudoconvex domain in $\C^N$ with real-analytic boundary
is Kobayashi complete.
\end{corollary}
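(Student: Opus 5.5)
The plan is to combine the Diederich--Forn\ae ss power estimate with the normal self-improvement of Section~\ref{sec:criteria}, and then to integrate the resulting distance bound. First, recall \cite[Theorem~4, p.~577]{DiederichFornaess79}. For a bounded pseudoconvex domain $\Omega$ with real-analytic boundary there are $\varepsilon>0$ and $c>0$ such that
\[
 \kappa_\Omega(z;X)\ge c\,\frac{\|X\|}{\delta_\Omega(z)^\varepsilon}
\]
for $z$ near $\partial\Omega$. The reason is that real-analytic boundaries contain no germs of positive-dimensional complex varieties. Diederich--Forn\ae ss then use a bounded plurisubharmonic exhaustion of H\"older type, $-(-\rho)^\eta$, together with the Sibony-type localization, to obtain this estimate. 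I would state this precisely and check that their normalization (Kobayashi--Royden metric, distance to the boundary) matches the hypothesis of the criterion in Section~\ref{sec:criteria}.

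Second, a real-analytic boundary is in particular $C^\infty$, so $\Omega$ has a smooth defining function $r$ with $dr\neq0$ on $\partial\Omega$. The general (non-plurisubharmonic) version of the self-improvement therefore applies, via the uniform disc mean estimate derived from pseudoconvexity. Feeding in the power bound yields the normal estimate \eqref{eq:intro-normal}. The criterion then gives \eqref{eq:intro-distance}:
\[
 k_\Omega(z_0,z)\ge c\log\log\frac{A}{\delta_\Omega(z)}-C_{z_0}.
\]
Spelled out, one integrates \eqref{eq:intro-normal} along any piecewise $C^1$ curve from $z_0$ to $z$. Setting $t=-r$, we have $|\partial r(X)|\ge |d(-r)(X)|/2$ up to the real/complex convention. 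Hence the length is at least
\[
 c'\int \frac{dt}{t\log(A/t)},
\]
which diverges like $\log\log(1/\delta_\Omega(z))$.

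Finally, completeness follows. Since the right side of \eqref{eq:intro-distance} tends to $\infty$ as $z\to\partial\Omega$, every $k_\Omega$-ball centered at $z_0$ is relatively compact in $\Omega$. Because $k_\Omega$ is an inner distance inducing the Euclidean topology, the Hopf--Rinow-type argument gives that Cauchy sequences are bounded, hence lie in a compact set, hence converge. So $\Omega$ is Kobayashi complete.

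The main obstacle is the first step: verifying that the cited Diederich--Forn\ae ss result really gives a lower bound in all directions with a power of $\delta_\Omega$, uniformly near the whole boundary. If their statement is only local, I would cover $\partial\Omega$ by finitely many such neighborhoods. The localization property of the Kobayashi metric near a point admitting a bounded plurisubharmonic peak-type function would then pass from local to global, at the cost of shrinking $c$. Everything else is supplied by Section~\ref{sec:criteria}.
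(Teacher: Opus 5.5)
Your proposal is correct and follows the paper's proof: the Diederich--Forn\ae ss power estimate (which, as cited, already holds uniformly for $z$ near $\partial\Omega$, so your finite-covering fallback is not needed) is fed into Proposition~\ref{prop:self-general} using a $C^\infty$ defining function and an integer $q\ge3$ with $q\varepsilon\ge1$, which gives the normal estimate, and Proposition~\ref{prop:criterion} then yields completeness. The only difference is in presentation: your curve-integration step implicitly uses Royden's theorem that $k_\Omega$ is the integrated form of $\kappa_\Omega$, whereas the paper integrates $d(H\circ f)$ along chains of analytic discs.
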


It remains, for Theorem~\ref{thm:finite-type}, to obtain the initial
power estimate at finite-type points.  This is done in
Section~\ref{sec:finite-type-power}.  An estimate of this form was sought
in Cho's paper \cite{Cho}.  We do not use Cho's argument.  Instead, the
proof given here couples the uniform disc mean estimate with a
finite-jet polarization, a unitary family of holomorphic ideals, and
D'Angelo's type--multiplicity inequality \cite[Theorem~2.7]{DAngelo}.  Once the power estimate is established locally, a
finite covering of the boundary and the completeness criterion prove
Theorem~\ref{thm:finite-type}.

The finite-type hypothesis is essential for this conclusion.  The
smooth-boundary questions in the \emph{Several Complex Variables
Problem List} \cite[Problems~7.3.1 and 7.5.1]{ProblemList} ask whether
bounded smoothly bounded pseudoconvex domains must be complete.  For
the Kobayashi distance the answer is negative, even in the presence of
a global smooth plurisubharmonic defining function.

\begin{theorem}\label{thm:main}
There exists $\rho\in C^\infty(\C^2)\cap\psh(\C^2)$ such that
\[
 \Omega=\{(s,t,w)\in\C^3:|w|^2+\rho(s,t)<1\}
\]
is a bounded domain with $C^\infty$ boundary and incomplete Kobayashi
distance.  Moreover, $\Omega$ admits a Kobayashi-Cauchy sequence
converging Euclideanly to $(0,0,1)\in\partial\Omega$ along the inward
normal, and its Carath\'eodory distance is also incomplete.
\end{theorem}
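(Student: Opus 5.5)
The plan is to build $\Omega$ as a Hartogs-type domain fibred over the $(s,t)$-plane, with $\rho$ vanishing to infinite order along a sequence of larger and larger analytic discs accumulating at the origin. Concretely, we look for $\rho\in C^\infty(\C^2)\cap\psh(\C^2)$ with $\rho\ge 0$, $\rho(0,0)=0$, $\rho$ strictly positive and large (say $\rho\ge 2$) outside a compact set, and $\{\rho<1\}$ bounded. Then $r=|w|^2+\rho-1$ is a global smooth psh defining function, and $\Omega$ is bounded and pseudoconvex. Smoothness of $\partial\Omega$ needs $dr\ne0$ on $\{r=0\}$. Where $w\neq 0$ this is automatic. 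Where $w=0$ we need $d\rho\neq0$ on $\{\rho=1\}$, which we arrange by making $\rho$ behave like a strictly psh exhaustion (e.g.\ a fixed multiple of $|s|^2+|t|^2$) away from a neighbourhood of the origin.

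For incompleteness, consider the points $p_n=(0,0,1-\varepsilon_n)$ on the inward normal at $(0,0,1)$. The key mechanism: if $\rho$ is very small on a large complex disc through the origin, then $\Omega$ contains a large analytic disc passing near $p_n$ and $p_{n+1}$. Suppose that along a holomorphic disc $\phi_n:\D\to\C^2$ with $\phi_n(0)=0$ we have $\rho\circ\phi_n\le \eta_n$, where $\phi_n$ ranges over a region of size $R_n$. Then the product-type disc $\zeta\mapsto(\phi_n(\zeta),\,a_n+b_n\zeta)$ lies in $\Omega$ whenever $|a_n|+|b_n|<\sqrt{1-\eta_n}$. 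Using $p_n$, $p_{n+1}$ and a disc in the $w$-variable of radius close to $1$, the Kobayashi distance between these points is bounded by a hyperbolic distance in a disc of radius $\sqrt{1-\eta_n}$. That alone gives distance $\sim \frac12\log\frac{\varepsilon_n}{\varepsilon_{n+1}}$, which is not summable, so the discs must be genuinely two-dimensional in use. We therefore let the $w$-coordinate move \emph{with} the base. Take $f_n(\zeta)=(\phi_n(\zeta),\,g_n(\zeta))$ with $g_n$ chosen so that $|g_n|^2<1-\rho\circ\phi_n$. If $\rho\circ\phi_n$ is extremely flat (decaying like $\exp(-1/|\zeta|)$ type near the boundary of the base disc but tiny inside), $g_n$ can be an outer function whose modulus tracks $\sqrt{1-\rho\circ\phi_n}$. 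The two points $g_n(0)=1-\varepsilon_n$ and $g_n(\zeta_n)=1-\varepsilon_{n+1}$ are then reachable with $|\zeta_n|$ bounded away from $1$, giving $k_\Omega(p_n,p_{n+1})\le c_n$ with $\sum c_n<\infty$. Hence $(p_n)$ is Kobayashi--Cauchy, does not converge in $\Omega$, and converges Euclideanly to $(0,0,1)$ along the normal. Carath\'eodory incompleteness follows immediately from $c_\Omega\le k_\Omega$.

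The construction of $\rho$ proceeds as a convergent sum $\rho=\sum_n \rho_n+\psi$. Each $\rho_n$ is a smooth psh function, of the form $\chi_n(|h_n(s,t)|^2)$ with $h_n$ a polynomial and $\chi_n$ convex increasing and flat at $0$. It vanishes exactly on $\{h_n=0\}$, which contains the disc $\phi_n$. The $C^k$ norms on a fixed ball are bounded by $2^{-n}$ for $k\le n$, which gives a $C^\infty$ limit that is psh as a locally uniform limit of psh functions. The $\phi_n$ must be chosen compatibly, so that all earlier $\rho_m$, $m<n$, stay $\le\eta_n$ along $\phi_n$. The natural choice is curves like $t=s^{k_n}$ or $s=t^{k_n}$ in the spirit of Fornaess-type examples, with $h_n$ vanishing on them and tiny near the origin by flatness.

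The main obstacle is this last balancing step. One must guarantee simultaneously (a) a smooth psh limit, (b) along each $\phi_n$ the total $\rho$ is small enough for a $w$-disc to connect $p_n$ and $p_{n+1}$ at summable cost, and (c) the boundary stays smooth. The conflict is that flatness (needed for (a)) forces $\rho_n$ to be small near the zero set, but smallness must also hold for all later-added terms along earlier curves. I would first try to choose $\varepsilon_n$ only after fixing $\rho_1,\dots,\rho_n$, and to make $\rho_{n+1}$ vanish on a neighbourhood of $\phi_n(\overline{\D}_{1/2})$. This inductive ordering should decouple (b) from the tails of the series.
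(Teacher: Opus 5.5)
There is a genuine gap, and it is fatal to the approach as set up: the normalization $\rho\ge0$ makes the conclusion impossible. If $\rho\ge0$, then every $(s,t,w)\in\Omega$ satisfies $|w|^2<1-\rho(s,t)\le1$. So the coordinate $w$ maps $\Omega$ holomorphically into $\D$. Holomorphic maps contract both distances, hence
\[
 k_\Omega(p_1,p_N)\ge c_\Omega(p_1,p_N)\ge\rho_\D(1-\varepsilon_1,1-\varepsilon_N)\longrightarrow\infty .
\]
Consequently $\sum_n k_\Omega(p_n,p_{n+1})=\infty$ for every choice of $\varepsilon_n\downarrow0$. In fact no sequence tending to $(0,0,1)$, or to any boundary point with $|w|=1$, can be Kobayashi--Cauchy or Carath\'eodory--Cauchy.

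The same observation shows why your ``outer function'' repair cannot work. Any $g_n$ with $|g_n|^2<1-\rho\circ\phi_n\le1$ is a self-map of $\D$. Schwarz--Pick then gives $\rho_\D(0,\zeta_n)\ge\rho_\D(1-\varepsilon_n,1-\varepsilon_{n+1})$, so the product-disc bound you computed and rejected is already optimal among all discs. Making $\rho$ small, or even zero, along large discs only enlarges the fibres up to the unit disc; it can never make the normal direction cheaper than in $\D$. There are two secondary problems as well. Discs such as $\zeta\mapsto(\zeta,\zeta^{k_n})$ pass through the origin only once, so $f_n(\zeta_n)$ cannot equal $p_{n+1}$. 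And the balancing step you single out as the main obstacle is left unresolved.

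The missing idea is that $\rho$ must be negative at points arbitrarily close to the origin, so that the normal component of the disc can leave the unit circle. In the paper, $\rho\circ\gamma_n=\delta_n\bigl(h_{a_n}+o(1)\bigr)$, where $h_{a_n}$ is subharmonic and vanishes near the two parameters $0$ and $\tau_n=n^{-2}$. It also satisfies $h_{a_n}\le1-a_n\Ree\zeta-\frac1{16}$ on $\overline\D$, so it is very negative where $\Ree\zeta$ is near $1$.

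This negativity admits the normal component $1+\frac{\delta_n}{2}(-1+a_n\zeta)$. Its modulus exceeds $1$ for $\Ree\zeta>a_n^{-1}$, and its derivative $a_n\delta_n/2$ is much larger than the depth $\delta_n$, which Schwarz--Pick forbids for maps into $\D$. The nodal curve $\Gamma(\xi)=(\xi^2(\xi-1),\xi(\xi-1))$ gives $\gamma_n(0)=\gamma_n(\tau_n)=0$. Hence $\varphi_n(\tau_n)=\varphi_{n+1}(0)$ exactly, and each step costs only $\operatorname{arctanh}(n^{-2})$.

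Sections~4--5 of the paper then realize these \emph{signed}, two-sided prescribed restrictions with a single smooth plurisubharmonic function. This uses the Levi-form corrections $\lambda_nq_n$ built from $\log|T_n|^2$, the smooth-maximum extension, and the $2^{-n}$-weighted series. Each term of that series vanishes on a long block of neighbouring curves, which makes the restriction error $o(2^{-n})$.
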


Thus $C^\infty$ boundary regularity, in contrast with finite type, does
not force Kobayashi completeness.  The example has an additional
feature.  Its global defining function
\[
 r(s,t,w)=|w|^2+\rho(s,t)-1
\]
is plurisubharmonic, negative on $\Omega$, and tends to $0$ at the
boundary.  Hence $r|_\Omega$ is a bounded negative plurisubharmonic
exhaustion, so $\Omega$ is hyperconvex.  By B\l ocki--Pflug
\cite{BlockiPflug}, and independently Herbort \cite{HerbortBergman},
every bounded hyperconvex domain is Bergman complete.  Thus the same
smoothly bounded domain is Bergman complete while both its Kobayashi and
Carath\'eodory distances are incomplete.  The boundary point at which
incompleteness occurs has infinite D'Angelo type, in accordance with
Theorem~\ref{thm:finite-type}.

The construction modifies the analytic discs used by Forn\ae ss and
Lee \cite{FornLee}, following earlier work of Krantz \cite{Krantz}; see
also Pflug and Zwonek \cite{PflugZwonek}.  Their construction uses a
cuspidal curve $\xi\mapsto(\xi^3,\xi^2)$.  Here we use instead the nodal polynomial curve
\[
 \Gamma(\xi)=(\xi^2(\xi-1),\xi(\xi-1)).
\]
The identities $\Gamma(0)=\Gamma(1)=0$ permit an analytic disc to meet
the same complex normal line at two different parameter values.
Rescaling such discs and choosing their normal components appropriately
produces a sequence of points approaching the boundary whose successive
Kobayashi distances are summable.  The main work lies in constructing
a smooth plurisubharmonic function with prescribed restrictions to
these discs.  The distinction from the classical nonsmooth examples is
global boundary regularity.  Sibony's unpublished example, as described
by Jarnicki and Pflug \cite[Section~2.8]{JarnickiPflugRevisited}, has
$C^\infty$ boundary except at one point, and the recent examples of
Masanta \cite{Masanta} have the same regularity pattern.  Earlier
balanced counterexamples were obtained in
\cite{JarnickiPflug,Edigarian2}.

The paper is organized as follows.  Section~\ref{sec:criteria} states
the log--log completeness criterion and proves the normal self-improvement,
first in the presence of a plurisubharmonic defining function and then for
general pseudoconvex domains; it ends with the proof of the real-analytic
corollary from the Diederich--Forn\ae ss estimate.  Section~\ref{sec:finite-type-power}
proves the finite-type power lower bound and concludes the proof of
Theorem~\ref{thm:finite-type}.  Sections~4--6 construct the domain in
Theorem~\ref{thm:main}: Section~4 builds the scalar subharmonic functions
and polynomial discs, Section~5 constructs the smooth plurisubharmonic
potential, and Section~6 assembles the domain and proves Kobayashi and
Carath\'eodory incompleteness.

We write $\D=\{\zeta\in\C:|\zeta|<1\}$.  Our normalization is
\[
 \kappa_\Omega(z;X)=\inf\{a>0:\ f\in\mathcal O(\D,\Omega),\
 f(0)=z,\ f'(0)=a^{-1}X\}.
\]
In Sections~4--6 we use $(s,t)$ for coordinates on $\C^2$ and $w$ for
the third coordinate on $\C^3$.  We write $k_\Omega$ for the associated
Kobayashi distance and $c_\Omega$ for the Carath\'eodory distance.  Thus
holomorphic discs decrease the Poincar\'e distance
$\operatorname{arctanh}|(\zeta-\xi)/(1-\bar\xi\zeta)|$, and
$c_\Omega\le k_\Omega$.  Completeness always means Cauchy completeness.
All $C^m$ norms use real derivatives; a smooth function is flat at the
origin when all its derivatives vanish there.  Constants denoted by $C$
may change from line to line, but depend only on fixed quantities unless
otherwise indicated.

\section{Kobayashi completeness criterion and self-improvement}\label{sec:criteria}
\subsection{The completeness criterion}

The following log--log criterion is the form of the completeness principle that we need; compare Fu~\cite[Remark following Theorem~2.6]{Fu}.

\begin{proposition}[Log--log completeness criterion]\label{prop:criterion}
Let $\Omega\subset\C^N$ be a bounded domain and let
\[
r\in C^1(\Omega)\cap C(\overline\Omega),
\qquad
r<0\ \text{on }\Omega,
\qquad
r=0\ \text{on }\partial\Omega.
\]
Suppose that for some $c>0$, $A>0$, and $t_0>0$,
\begin{equation}
\kappa_\Omega(z;X)
\ge
c\,
\frac{\left|\displaystyle\sum_{j=1}^N r_{z_j}(z)X_j\right|}
{(-r(z))\log\!\dfrac{A}{-r(z)}}
\label{eq:crit-loglog}
\end{equation}
whenever $0<-r(z)<t_0$ and $X\in\C^N$. Then $\Omega$ is Kobayashi complete.

More precisely, after decreasing $t_0$ if necessary, there is $C>0$ such that
\begin{equation}
k_\Omega(z,w)
\ge
C\left|
\log\log\frac{A}{-r(z)}
-
\log\log\frac{A}{-r(w)}
\right|
\label{eq:crit-distance-pair}
\end{equation}
whenever $z$ and $w$ are sufficiently close to $\partial\Omega$.
For every fixed $z_0\in\Omega$,
\begin{equation}
k_\Omega(z_0,z)
\ge
C\log\log\frac{A}{-r(z)}-C_{z_0}
\label{eq:crit-distance-point}
\end{equation}
for $z$ sufficiently close to $\partial\Omega$.

If in addition
\[
A>\max_{\overline\Omega}(-r),
\]
then there is $C>0$ such that
\begin{equation}
k_\Omega(z,w)
\ge
C\left|
\log\log\frac{A}{-r(z)}
-
\log\log\frac{A}{-r(w)}
\right|
\label{eq:crit-distance-global}
\end{equation}
holds for all $z,w\in\Omega$.
\end{proposition}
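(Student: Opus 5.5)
The plan is to integrate the infinitesimal bound \eqref{eq:crit-loglog} along curves, using that $k_\Omega$ is the integrated form of $\kappa_\Omega$. Set
\[
\phi(z)=\log\log\frac{A}{-r(z)},
\]
defined where $0<-r(z)<\min(t_0,A/e)$; decreasing $t_0$, we may assume $t_0<A/e$, so that $\log(A/(-r))>1$ there. Since $r$ is real and $C^1$,
\[
d\phi(z)X=\frac{2\Ree\sum_j r_{z_j}(z)X_j}{(-r(z))\log(A/(-r(z)))},
\]
and \eqref{eq:crit-loglog} therefore gives $|d\phi(z)X|\le (2/c)\,\kappa_\Omega(z;X)$ on the collar $U=\{0<-r<t_0\}$. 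We use the standard fact that for domains the Kobayashi distance equals the infimum of $\int_0^1\kappa_\Omega(\gamma;\gamma')\,dt$ over piecewise $C^1$ curves (Royden's theorem; $\kappa_\Omega$ is upper semicontinuous).

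First we prove \eqref{eq:crit-distance-pair}. Take $z,w$ with $-r(z),-r(w)<t_1$, where $t_1<t_0$ is fixed, and let $\gamma$ be a curve from $z$ to $w$. If $\gamma$ stays in $U$, then
\[
|\phi(z)-\phi(w)|\le \int|d\phi(\gamma)\gamma'|\le \tfrac2c\int\kappa_\Omega(\gamma;\gamma').
\]
If $\gamma$ leaves $U$, it must cross the level sets $\{-r=t_1\}$ and $\{-r=t_0\}$. On each sub-arc from $z$ (respectively $w$) to its first exit from $U$, the same estimate bounds the length from below by $(c/2)(\phi(z)-\phi_{t_0})$, where $\phi_{t_0}=\log\log(A/t_0)$. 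This quantity dominates $(c/2)|\phi(z)-\phi(w)|$ up to the constant $\phi_{t_0}$. To remove that constant, we restrict to $z,w$ so close to the boundary that $\phi(z),\phi(w)\ge 2\phi_{t_0}$; then $\phi(z)+\phi(w)-2\phi_{t_0}\ge \tfrac12|\phi(z)-\phi(w)|$, since both are at least $2\phi_{t_0}$. Taking the infimum over curves yields \eqref{eq:crit-distance-pair}.

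Next, \eqref{eq:crit-distance-point} follows by the same exit argument applied to curves from $z$ to $z_0$, which must exit $U$ if $z_0\notin U$ (otherwise shrink $t_0$). This gives $k_\Omega(z_0,z)\ge (c/2)(\phi(z)-\phi_{t_0})$. Completeness then follows. A $k_\Omega$-Cauchy sequence is $k_\Omega$-bounded, so by \eqref{eq:crit-distance-point} $-r$ stays bounded below and the sequence lies in a compact subset $\{-r\ge\epsilon\}$ of $\Omega$ (using $r=0$ on $\partial\Omega$ and continuity). On compact subsets $k_\Omega$ induces the Euclidean topology and is comparable to the Euclidean distance, so the sequence converges.

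For \eqref{eq:crit-distance-global}, with $A>\max(-r)$, the function $\phi$ is defined on all of $\Omega$. Off the collar, $\phi$ is $C^1$ on the compact region $K=\{-r\ge t_1\}$, where $\kappa_\Omega(z;X)\ge c'\|X\|$ by boundedness of $\Omega$. Hence $|d\phi X|\le C\kappa_\Omega$ on $K$ too, but only if $\phi$ is $C^1$ there. The main obstacle is that $\log(A/(-r))$ may be close to $0$ where $-r$ is near $A$, though it stays positive. Since $A>\max(-r)$ strictly, $\log(A/(-r))\ge\log(A/\max(-r))>0$, so $d\phi$ is bounded on $K$. Combining the two regions gives a global bound $|d\phi X|\le C'\kappa_\Omega(z;X)$, and integration along curves then yields \eqref{eq:crit-distance-global} for all $z,w\in\Omega$.
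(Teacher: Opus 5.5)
Your proof is correct, but it is organized differently from the paper's. The paper extends $h(t)=\log\log(A/t)$ from $(0,t_1]$ to a $C^1$ function $\widetilde h$ on $(0,\max_{\overline\Omega}(-r)]$ whose derivative is bounded on $[t_1,\max_{\overline\Omega}(-r)]$. The function $H=\widetilde h(-r)$ then satisfies $|dH_z(V)|\le C\kappa_\Omega(z;V)$ on all of $\Omega$. The paper shows that $H$ is $C$-Lipschitz for $k_\Omega$ directly from the chain definition: contraction on each disc, integrated along Poincar\'e geodesics. From this single Lipschitz bound it reads off \eqref{eq:crit-distance-pair}, \eqref{eq:crit-distance-point}, \eqref{eq:crit-distance-global} and completeness. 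You instead define $\phi$ only on the collar, integrate $\kappa_\Omega$ along curves, and treat curves that leave the collar by the exit argument. Your route avoids constructing the extension $\widetilde h$. The paper's route avoids any case distinction on curves and gets all three estimates from one inequality. One point in your favour: you only need the easy half of Royden's theorem, $k_\Omega\ge\inf_\gamma\int\kappa_\Omega(\gamma;\gamma')$. It follows by pushing Poincar\'e geodesics through the discs of a chain, so your argument is no less elementary than the paper's. One simplification: the extra restriction $\phi(z),\phi(w)\ge2\phi_{t_0}$ is unnecessary. Indeed $\phi(z)+\phi(w)-2\phi_{t_0}-|\phi(z)-\phi(w)|=2\bigl(\min\{\phi(z),\phi(w)\}-\phi_{t_0}\bigr)$, which is nonnegative for all points of the collar.
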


\begin{proof}
Decrease $t_0$ so that $t_0<A$, and choose $t_1\in(0,t_0)$. Put
\[
h(t)=\log\log\frac At,
\qquad 0<t\le t_1.
\]
Let
\[
M=\max_{\overline\Omega}(-r)>0,
\]
and extend $h$ from $(0,t_1]$ to a $C^1$ function
$\widetilde h:(0,M]\to\mathbb R$ whose derivative is bounded on
$[t_1,M]$. Set
\[
H(z)=\widetilde h(-r(z)).
\]
If $0<-r(z)<t_1$, then for a real tangent vector $V$, identified with a vector of $\C^N$,
\[
|dH_z(V)|
=
\frac{|dr_z(V)|}{(-r(z))\log(A/(-r(z)))}.
\]
Since
\[
dr_z(V)=2\operatorname{Re}\left(\sum_{j=1}^N r_{z_j}(z)V_j\right),
\]
\eqref{eq:crit-loglog} gives
\begin{equation}
|dH_z(V)|\le C\kappa_\Omega(z;V)
\label{eq:crit-dH}
\end{equation}
near the boundary. On the compact set
\[
\{z\in\Omega:-r(z)\ge t_1\},
\]
$dH$ is bounded, while the Kobayashi--Royden metric dominates a positive multiple of the Euclidean norm because $\Omega$ is bounded. Thus, after increasing $C$, \eqref{eq:crit-dH} holds for all $z\in\Omega$ and $V\in\C^N$.

Let $f:\D\to\Omega$ be holomorphic. By infinitesimal contraction,
\[
\bigl|d(H\circ f)_\zeta(\xi)\bigr|
\le C\kappa_\D(\zeta;\xi).
\]
Integrating along a Poincar\'e geodesic and then over chains of analytic discs gives
\begin{equation}
|H(z)-H(w)|\le Ck_\Omega(z,w),
\qquad z,w\in\Omega.
\label{eq:crit-H-lipschitz}
\end{equation}
For $z,w$ sufficiently close to the boundary, $H=h\circ(-r)$ at both endpoints, and \eqref{eq:crit-distance-pair} follows. Fixing $z_0\in\Omega$ gives \eqref{eq:crit-distance-point}.

Since
\[
\log\log\frac{A}{-r(z)}\longrightarrow+\infty
\qquad\text{as }z\to\partial\Omega,
\]
\eqref{eq:crit-distance-point} implies that every Kobayashi-bounded subset of $\Omega$ is contained in a set
\[
\{z\in\Omega:-r(z)\ge\varepsilon\}
\]
for some $\varepsilon>0$. This set is compact in $\Omega$. Hence every Kobayashi-Cauchy sequence has a Euclidean-convergent subsequence with limit in $\Omega$. If $z_{j_\nu}\to z\in\Omega$, choose a Euclidean ball $B$ centered at $z$ with $B\Subset\Omega$. For large $\nu$,
\[
k_\Omega(z_{j_\nu},z)\le k_B(z_{j_\nu},z)\longrightarrow0.
\]
Since the whole sequence is Kobayashi-Cauchy, it converges to $z$ in the Kobayashi distance. Thus $\Omega$ is Kobayashi complete.

If $A>\max_{\overline\Omega}(-r)$, the function
\[
H(z)=\log\log\frac{A}{-r(z)}
\]
is defined on all of $\Omega$. The same estimate \eqref{eq:crit-dH} holds globally, and \eqref{eq:crit-H-lipschitz} gives \eqref{eq:crit-distance-global}.
\end{proof}

\subsection{Self-improvement from a power estimate}

\subsubsection*{Finite-regularity hypotheses}

For the normal self-improvement argument, the relevant assumptions are on the pair $(\Omega,r)$ rather than on $\Omega$ alone. Let $\varepsilon>0$ and let $q$ be an integer satisfying
\[
q\varepsilon\ge1.
\]
These are sufficient regularity assumptions for the proof below; no optimality is asserted. In the case of a plurisubharmonic defining function it is enough to assume $q\ge2$. In the general pseudoconvex case, the mean-estimate argument below uses one additional derivative, so we assume $q\ge3$.

More precisely, let $\Omega\Subset\C^N$ be bounded and let $r\in C^q(U)$, where $U$ is a neighborhood of $\overline\Omega$, be a defining function:
\[
\Omega=\{r<0\},\qquad dr\neq0\quad\text{on }\partial\Omega.
\]
Assume that, for some $c_0>0$ and all $z$ sufficiently close to $\partial\Omega$,
\begin{equation}
\kappa_\Omega(z;X)\ge c_0\frac{\|X\|}{\delta_\Omega(z)^\varepsilon},
\qquad X\in\C^N.
\label{eq:crit-13}
\end{equation}
Since $-r\asymp\delta_\Omega$ near the boundary and $\Omega$ is bounded, after decreasing the constant we may use the global estimate
\begin{equation}
\kappa_\Omega(z;X)\ge c\frac{\|X\|}{(-r(z))^\varepsilon},
\qquad z\in\Omega,
\quad X\in\C^N.
\label{eq:crit-14}
\end{equation}
Indeed, on a compact subset of $\Omega$ the Kobayashi--Royden metric dominates a positive multiple of the Euclidean norm.

For a $C^2$ function $r$ we write
\[
L_r(z;X,Y)=\sum_{j,k=1}^N r_{z_j\bar z_k}(z)X_j\overline{Y_k},
\qquad
L_r(z;X)=L_r(z;X,X).
\]

The proof rests on the following elementary polynomial estimate.

\begin{lemma}[Bernstein estimate on a fixed interval]
For each $b>0$ there is a constant $C_b>0$ such that every complex polynomial $P$ of degree at most $d\ge1$ satisfies
\begin{equation}
|P'(0)|\le C_b d\sup_{t\in[-b,b]}|P(t)|.
\label{eq:crit-15}
\end{equation}
\end{lemma}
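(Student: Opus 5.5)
We reduce the estimate to the classical Markov inequality on $[-1,1]$ by an affine change of variable, and then evaluate at the center of the interval. The classical Markov brothers' inequality states that for every complex polynomial $Q$ of degree at most $d$,
\[
\sup_{u\in[-1,1]}|Q'(u)|\le d^2\sup_{u\in[-1,1]}|Q(u)|.
\]
This gives only a factor $d^2$, so we need something sharper at the interior point $0$. For that we use Bernstein's inequality in its pointwise form: for $u\in(-1,1)$,
\[
|Q'(u)|\le \frac{d}{\sqrt{1-u^2}}\sup_{[-1,1]}|Q|.
\]
At $u=0$ this yields $|Q'(0)|\le d\sup_{[-1,1]}|Q|$.

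For complex coefficients the pointwise bound needs a short argument. We set $Q(\cos\theta)=T(\theta)$, which is a trigonometric polynomial of degree at most $d$ with complex coefficients. Bernstein's inequality for trigonometric polynomials, $\|T'\|_\infty\le d\|T\|_\infty$, holds for complex coefficients. Applying it at $\theta=\pi/2$ gives $|T'(\pi/2)|=|Q'(0)|\cdot|\sin(\pi/2)|=|Q'(0)|$. Hence $|Q'(0)|\le d\sup_{[-1,1]}|Q|$.

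Alternatively, one can avoid the complex case altogether. Write $Q'(0)=e^{i\alpha}|Q'(0)|$ and apply the real-coefficient inequality to $\operatorname{Re}(e^{-i\alpha}Q)$. This is a real polynomial of degree at most $d$, its derivative at $0$ equals $|Q'(0)|$, and its supremum is at most $\sup|Q|$.

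To pass to the interval $[-b,b]$, given $P$ we put $Q(u)=P(bu)$. Then $Q$ has degree at most $d$, $Q'(0)=bP'(0)$, and $\sup_{[-1,1]}|Q|=\sup_{[-b,b]}|P|$. Therefore
\[
|P'(0)|\le \frac{d}{b}\sup_{[-b,b]}|P|,
\]
which is the claimed estimate with $C_b=1/b$.

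The only step needing care is obtaining a linear rather than quadratic dependence on $d$. This is exactly why we evaluate at the interior point $0$ instead of using the endpoint-sensitive Markov bound, and why we go through the trigonometric substitution; the rest is routine.
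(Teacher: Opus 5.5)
Your proof is correct and is essentially the paper's argument: the paper simply cites the classical Bernstein inequality for algebraic polynomials on an interval and rescales. Your substitution $Q(\cos\theta)$, evaluated at $\theta=\pi/2$, makes the complex-coefficient case explicit and gives the explicit constant $C_b=1/b$.
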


This is the classical Bernstein inequality for algebraic polynomials on an interval, after rescaling; see, for example,~\cite[Section~5.1]{BorweinErdelyi}.

\subsubsection{The case of a plurisubharmonic defining function}

\begin{proposition}[Normal self-improvement: plurisubharmonic defining function]\label{prop:self-psh}
Let $\Omega\Subset\C^N$ be bounded, let $r\in C^q(U)$ be a defining function as above, and assume
\[
q\ge2,
\qquad
q\varepsilon\ge1.
\]
Suppose that $r$ is plurisubharmonic on $U\cap\Omega$ and that the power estimate \eqref{eq:crit-13} holds. Then there are constants $c,A>0$ such that, for $z$ sufficiently close to $\partial\Omega$ and every $X\in\C^N$,
\begin{equation}
\kappa_\Omega(z;X)
\ge
c\,
\frac{\left|\displaystyle\sum_{j=1}^N r_{z_j}(z)X_j\right|}
{(-r(z))\log\!\dfrac{A}{-r(z)}}.
\label{eq:crit-16}
\end{equation}
\end{proposition}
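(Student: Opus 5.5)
The plan is to fix $z$ near $\partial\Omega$ with $\delta:=-r(z)$, and an extremal-type disc $f\in\mathcal O(\D,\Omega)$ with $f(0)=z$ and $f'(0)=a^{-1}X$. It then suffices to prove
\[
\Bigl|\sum_j r_{z_j}(z)f_j'(0)\Bigr|\le C\,\delta\log\frac{A}{\delta},
\]
with $C$ independent of $z$ and $f$. Put $u=-r\circ f$. Since $r$ is plurisubharmonic, $u$ is a positive superharmonic function on $\D$ with $u(0)=\delta$. Hence every circle mean satisfies $\frac1{2\pi}\int_0^{2\pi}u(\rho e^{i\theta})\,d\theta\le\delta$, and Harnack gives the lower bound $u\ge\frac{1-|\zeta|}{1+|\zeta|}\delta$. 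This mean estimate replaces the pointwise control that is unavailable from above.

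\emph{Step 1 (size of the disc).} By contraction, $\kappa_\Omega(f(\zeta);f'(\zeta))\le(1-|\zeta|^2)^{-1}$. Combined with \eqref{eq:crit-14}, this gives
\[
|f'(\zeta)|\le C\,u(\zeta)^\varepsilon(1-|\zeta|^2)^{-1}.
\]
Using Cauchy estimates on circles of radius $\rho\le1/2$, together with the mean estimate and Jensen/Hölder ($\varepsilon\le1$), we obtain $\frac1{2\pi}\int|f'(\rho e^{i\theta})|\,d\theta\le C\delta^\varepsilon$. Hence $|f^{(k)}(0)|\le C_k\,\delta^\varepsilon\rho^{1-k}$, and $|f(\zeta)-z|\lesssim\delta^\varepsilon$ on most of a small circle in the $L^1$ sense.

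\emph{Step 2 (linearization).} Let $v(\zeta)=\sum_j r_{z_j}(z)(f_j(\zeta)-z_j)$, which is holomorphic with $v(0)=0$ and $v'(0)=\sum_j r_{z_j}(z)f_j'(0)$. Taylor-expand $r$ at $z$ to order $q$. The pure holomorphic terms of order $2,\dots,q-1$ can be absorbed into a holomorphic correction $\tilde v=v+\sum_{k\ge2}(\cdots)$ whose derivative at $0$ still equals $v'(0)$ up to $O(\delta^{2\varepsilon})$ terms. The mixed terms and the remainder are $O(|f-z|^2)$ resp. $O(|f-z|^q)$. We therefore get
\[
2\operatorname{Re}\tilde v=\delta-u+E,\qquad |E|\lesssim|f-z|^2+|f-z|^q .
\]
Because $q\varepsilon\ge1$, the top remainder is $O(\delta)$. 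The quadratic Levi term is handled by plurisubharmonicity: it has the right sign for the one-sided estimate $\operatorname{Re}\tilde v\le\delta/2+(\text{controlled})$ only in mean, which is enough because we will take circle means.

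\emph{Step 3 (extracting $v'(0)$ with the log loss).} This is the main obstacle: bounding $|\tilde v'(0)|$ from one-sided, mean-type information on $\operatorname{Re}\tilde v$ while losing only $\log(1/\delta)$ rather than a power. I would truncate $\tilde v$ to its Taylor polynomial $P$ of degree $d\approx\log(A/\delta)$. By Step 1 the coefficients decay like $\rho^{-k}\delta^\varepsilon$ on a fixed smaller disc, so the tail is $\le\delta$ once $d\ge C\log(1/\delta)$. I would then apply the Bernstein estimate \eqref{eq:crit-15} to $P$ restricted to a segment $[-b,b]$ in a direction where, by the circle-mean bounds and Harnack, $|\operatorname{Re}P|\lesssim\delta$. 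Controlling $\operatorname{Im}P$ there via Borel--Carathéodory on a slightly larger disc gives $|P'(0)|\le C_b\,d\,\delta\lesssim\delta\log(A/\delta)$.

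If this direct segment choice fails because $u$ can spike, I would first pass to a radius where the mean estimate is converted into a sup estimate on a large subset of the circle. The degree choice $d\sim\log(1/\delta)$ is exactly where the $\log(A/(-r(z)))$ in \eqref{eq:crit-16} is produced. Dividing gives $a\ge c|\sum r_{z_j}X_j|/(\delta\log(A/\delta))$.
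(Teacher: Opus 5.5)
\textbf{There is a genuine gap, in Step 2.}

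Your Step 1 is essentially sound. In fact, the Cauchy estimates you get from $\frac1{2\pi}\int_0^{2\pi}|f'(\rho e^{i\theta})|\,d\theta\le C\delta^\varepsilon$ give the \emph{uniform} bound $\|f-z\|+\|f'\|\le C\delta^\varepsilon$ on a fixed smaller disc. That uniform bound, not an $L^1$ statement, is what is needed later.

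After removing the pure (anti)holomorphic terms, the degree-$(q-1)$ Taylor polynomial of $r$ at $z$ still contains every mixed term $\partial^\alpha\bar\partial^\beta r(z)\,h^\alpha\bar h^\beta$ with $|\alpha|,|\beta|\ge1$, where $h=f(\zeta)-z$. Plurisubharmonicity gives a sign only to the degree-two part $L_r(z;h)$. The mixed terms of degree $3,\dots,q-1$ (such as $h_1^2\bar h_1$) have no sign and can be of size $\delta^{3\varepsilon}\gg\delta$ when $\varepsilon<1/3$. So you have no bound $E=O(\delta)$, pointwise or in mean, and no holomorphic $\tilde v$ whose real part is controlled at scale $\delta$.

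Step 3 cannot compensate for this:
\begin{itemize}
\item Circle means give no pointwise bound on $u$ along any segment. The Harnack lower bound you quote holds for positive harmonic, not superharmonic, functions, and is irrelevant here anyway.
\item If you really had $\operatorname{Re}P\le C\delta$ on a disc with $P(0)=0$, Harnack applied to $C\delta-\operatorname{Re}P$ would give $|P'(0)|\le C\delta$ with no logarithm, so Bernstein would have no role.
\item Likewise, if $E$ were $O(\delta)$ in mean, the first Fourier coefficient of $2\operatorname{Re}\tilde v=\delta-u+E$ on a single circle would already give $|v'(0)|\le C\delta$.
\end{itemize}
The logarithm in \eqref{eq:crit-16} is produced precisely by the mixed terms you are trying to discard.

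\textbf{The missing idea} is to keep all the mixed terms and let the first angular Fourier mode absorb them. Using the uniform bound from Step 1 and $q\varepsilon\ge1$, compose the full real Taylor polynomial $T_z$ of degree $q-1$ with the degree-$L$ Taylor polynomial $P_L$ of $f-z$, where $L\approx C_0\log(A/\delta)$. Then $Q=T_z\circ P_L$ is a real polynomial in $(\zeta,\bar\zeta)$ with
\begin{itemize}
\item $\deg Q\le C\log(A/\delta)$,
\item $|Q-r\circ f|\le C\delta$ on a fixed disc,
\item $\partial_\zeta Q(0)=\sum_j r_{z_j}(z)f_j'(0)$.
\end{itemize}
Writing $Q=\sum c_{jk}\zeta^j\bar\zeta^k$, set
\[
B(t)=\frac1{2\pi}\int_0^{2\pi}Q(te^{i\theta})e^{-i\theta}\,d\theta=\sum_{j-k=1}c_{jk}t^{j+k},\qquad B'(0)=c_{10}=\partial_\zeta Q(0).
\]
Since $|r\circ f|=g$ and $g$ is superharmonic, $|B(t)|\le\frac1{2\pi}\int_0^{2\pi}g(te^{i\theta})\,d\theta+C\delta\le C\delta$. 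No sign on the mixed terms is needed, because they only contribute the higher powers $t^{j+k}$ to $B$. Bernstein's inequality on a fixed interval then gives $|B'(0)|\le C\deg(Q)\,\delta\le C\delta\log(A/\delta)$, which is the estimate you want. Plurisubharmonicity enters only through the mean bound $\frac1{2\pi}\int g\le\delta$.
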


\begin{proof}
Fix a holomorphic disc $f\in\mathcal O(\D,\Omega)$ and write
\[
z=f(0),
\qquad
g=-r\circ f>0,
\qquad
\delta=g(0)=-r(z).
\]
All constants below are uniform over $f$ and over sufficiently small $\delta$.

\medskip
\noindent\emph{Step 1: control of the analytic disc.}
Infinitesimal contraction and \eqref{eq:crit-14} give
\[
\frac{\|f'(\zeta)\|}{g(\zeta)^\varepsilon}
\le
\frac{C}{1-|\zeta|^2}.
\]
Hence, on a fixed smaller disc $\D_a$,
\begin{equation}
\|f'(\zeta)\|^{1/\varepsilon}\le Cg(\zeta).
\label{eq:crit-17}
\end{equation}
Since $r\circ f$ is subharmonic, $g=-r\circ f$ is positive and superharmonic. Hence
\begin{equation}
\frac1{2\pi}\int_0^{2\pi}g(te^{i\theta})\,d\theta\le g(0)=\delta,
\qquad 0<t<1.
\label{eq:crit-18}
\end{equation}
Integrating in $t$ gives
\[
\int_{\D_a}g\,dA\le C\delta.
\]
By \eqref{eq:crit-17},
\[
\int_{\D_a}\|f'(\zeta)\|^{1/\varepsilon}\,dA(\zeta)\le C\delta.
\]
The function $\|f'\|^{1/\varepsilon}$ is subharmonic. The submean inequality on fixed discs contained in $\D_a$ therefore yields
\[
\sup_{|\zeta|\le a/2}\|f'(\zeta)\|^{1/\varepsilon}\le C\delta,
\]
and hence
\begin{equation}
\sup_{|\zeta|\le a/2}
\bigl(\|f'(\zeta)\|+\|f(\zeta)-z\|\bigr)
\le C\delta^\varepsilon.
\label{eq:crit-19}
\end{equation}

\medskip
\noindent\emph{Step 2: logarithmic-degree polynomial approximation.}
Put $m=q-1$, so that
\[
(m+1)\varepsilon=q\varepsilon\ge1.
\]
Let $T_z$ be the real Taylor polynomial of $r$ at $z$ of degree $m$, written as a polynomial in the increment $h\in\C^N$, including the constant term $r(z)$. By Taylor's theorem and \eqref{eq:crit-19},
\begin{equation}
|r(f(\zeta))-T_z(f(\zeta)-z)|
\le C\delta^{(m+1)\varepsilon}
\le C\delta
\label{eq:crit-20}
\end{equation}
for $|\zeta|\le a/2$.

Let $P_L$ be the Taylor polynomial at $0$ of the holomorphic map $f-z$, of degree $L$. Cauchy's coefficient estimates applied on $\D_{a/2}$ give, on $\D_{a/4}$,
\begin{equation}
\|(f-z)-P_L\|\le C\delta^\varepsilon 2^{-L}.
\label{eq:crit-21}
\end{equation}
Choose
\[
L=\left\lceil C_0\log\frac{A}{\delta}\right\rceil
\]
with $C_0$ sufficiently large. Then the right-hand side of \eqref{eq:crit-21} is $o(\delta)$. Since the first derivatives of $T_z$ are uniformly bounded, the real polynomial
\[
Q(\zeta,\bar\zeta)=T_z(P_L(\zeta))
\]
satisfies
\begin{equation}
\sup_{|\zeta|\le a/4}|Q(\zeta)-r(f(\zeta))|\le C\delta,
\qquad
\deg Q\le mL\le C\log\frac{A}{\delta}.
\label{eq:crit-22}
\end{equation}
Moreover, the derivative at the center is preserved exactly:
\begin{equation}
\partial_\zeta Q(0)
=
\sum_{j=1}^N r_{z_j}(z)f_j'(0).
\label{eq:crit-23}
\end{equation}

\medskip
\noindent\emph{Step 3: extract the complex-normal coefficient.}
For real $t$ with $|t|\le a/4$, define
\[
B(t)=\frac1{2\pi}\int_0^{2\pi}Q(te^{i\theta},te^{-i\theta})e^{-i\theta}\,d\theta.
\]
If
\[
Q(\zeta,\bar\zeta)=\sum_{j,k}c_{jk}\zeta^j\bar\zeta^k,
\]
then
\[
B(t)=\sum_{j-k=1}c_{jk}t^{j+k}.
\]
Thus $B$ is a complex polynomial with
\[
\deg B\le\deg Q,
\qquad
B'(0)=c_{10}=\partial_\zeta Q(0).
\]
Since $r\circ f=-g$, estimates \eqref{eq:crit-18} and \eqref{eq:crit-22} imply
\[
|B(t)|
\le
\frac1{2\pi}\int_0^{2\pi}|Q(te^{i\theta})|\,d\theta
\le
\frac1{2\pi}\int_0^{2\pi}g(te^{i\theta})\,d\theta+C\delta
\le C\delta.
\]
Applying the Bernstein estimate on the fixed interval $[-a/4,a/4]$ gives
\[
|B'(0)|
\le
C\,\deg(B)\,\delta
\le
C\delta\log\frac{A}{\delta}.
\]
Using \eqref{eq:crit-23}, we obtain
\begin{equation}
\left|
\sum_{j=1}^N r_{z_j}(z)f_j'(0)
\right|
\le
C(-r(z))\log\frac{A}{-r(z)}.
\label{eq:crit-24}
\end{equation}

\medskip
\noindent\emph{Step 4: pass to the Kobayashi metric.}
If $f$ is admissible in the definition of $\kappa_\Omega(z;X)$ and
\[
f'(0)=\lambda^{-1}X,
\]
then \eqref{eq:crit-24} gives
\[
\lambda^{-1}
\left|\sum_{j=1}^N r_{z_j}(z)X_j\right|
\le
C(-r(z))\log\frac{A}{-r(z)}.
\]
Hence
\[
\lambda
\ge
c\,
\frac{\left|\displaystyle\sum_{j=1}^N r_{z_j}(z)X_j\right|}
{(-r(z))\log\!\dfrac{A}{-r(z)}}.
\]
Taking the infimum over all admissible discs proves \eqref{eq:crit-16}.
\end{proof}

\subsubsection{The general pseudoconvex case}

The preceding proof uses plurisubharmonicity only once: to obtain the superharmonic mean estimate \eqref{eq:crit-18}. The rest of the argument depends only on a uniform mean estimate for $g=-r\circ f$. We now recover exactly such an estimate from pseudoconvexity.

\begin{lemma}[Uniform disc mean estimate]\label{lem:disc-mean}
Let $\Omega\Subset\C^N$ be pseudoconvex and let $r\in C^3(U)$ be a defining function. Then there are constants $a\in(0,1/4)$ and $C>0$ such that for every $f\in\mathcal O(\D,\Omega)$, if $g=-r\circ f$, then
\begin{equation}
\frac1{2\pi}\int_0^{2\pi}g(te^{i\theta})\,d\theta
\le Cg(0),
\qquad 0<t\le a.
\label{eq:crit-25}
\end{equation}
\end{lemma}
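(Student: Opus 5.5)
Since $\Omega$ is pseudoconvex with a $C^3$ (in particular $C^2$) boundary, we take the Diederich--Forn\ae ss exponent route: there exist $\eta\in(0,1)$ and a defining function $\tilde r=-(-r)e^{-K\phi}$ (with $\phi=\|z\|^2$, $K$ large) such that $-(-\tilde r)^\eta$ is plurisubharmonic on $\Omega\cap V$ for a neighborhood $V$ of $\partial\Omega$. This result of Diederich--Forn\ae ss needs only $C^2$ regularity of $r$; the extra derivative in the $C^3$ hypothesis is what one would use to carry out the construction directly if one prefers not to quote it. Write $\rho=-(-\tilde r)^\eta$. Then $-\rho\asymp (-r)^\eta$ near $\partial\Omega$. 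Since $\Omega$ is bounded, we extend $\rho$ to a negative plurisubharmonic function on all of $\Omega$, still comparable to $-(-r)^\eta$. The standard gluing is $\max(\rho,-M+\text{const})$ adjusted so that it agrees with $\rho$ near the boundary; the comparability in the interior then holds because both sides are bounded above and below by negative constants there.

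Given $f\in\mathcal O(\D,\Omega)$, the function $u=-\rho\circ f$ is positive and superharmonic on $\D$. Hence
\[
\frac1{2\pi}\int_0^{2\pi} u(te^{i\theta})\,d\theta\le u(0),
\]
and therefore
\[
\frac1{2\pi}\int_0^{2\pi} g(te^{i\theta})^\eta\,d\theta\le C g(0)^\eta .
\]
This bounds the mean of $g^\eta$ rather than of $g$. The conversion is the main obstacle: Jensen's inequality goes the wrong way, since $g^\eta$ is concave in $g$.

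To recover the mean of $g$ itself, use Harnack. The function $u$ is positive superharmonic, so on $|\zeta|\le a$ it satisfies only a one-sided Harnack-type bound. Instead, use the Riesz decomposition $u=P[\mu]+G[\nu]$, the Poisson integral of a positive measure plus the Green potential of a positive measure. The Poisson part satisfies two-sided Harnack on $\D_a$, $P[\mu](\zeta)\le C P[\mu](0)\le C u(0)$, which gives a pointwise bound $g\le C g(0)$ for that part after taking $1/\eta$ powers. The Green part is controlled by $u(0)$ via $\int \log(1/|w|)\,d\nu(w)\le u(0)$, but taking the $1/\eta$ power of a Green potential is not controlled on average. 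This alone does not close the argument.

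The more robust plan exploits the freedom in the Diederich--Forn\ae ss construction, which holds for all small $\eta$. Apply the mean-value argument to the plurisubharmonic function $\rho_\eta$ and the superharmonicity of $u$ on the circle of radius $t$ over a family of small discs. Then combine it with the lower curvature bound coming from $C^3$: a Taylor expansion of $r\circ f$ at $0$, together with the Levi form being nonnegative on complex tangents up to an error $O(|r|)\cdot|X|^2 + O(|\partial r\cdot X|)|X|$, gives
\[
\Delta(r\circ f)\ge -C\bigl(g\|f'\|^2+|\partial r(f)\cdot f'|\,\|f'\|\bigr).
\]
This yields $\Delta g\le C g\|f'\|^2+C\|f'\|\,|\partial_\zeta g|$. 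With $\|f'\|\le C$ on $\D_{1/2}$ (Schwarz-type bound, $\Omega$ bounded), this is a linear elliptic differential inequality $\Delta g - b\cdot\nabla g - cg\le0$ with bounded coefficients and $g>0$. For small radius $a$, a comparison with the function $M(t)=\frac1{2\pi}\int g(te^{i\theta})\,d\theta$, using
\[
(tM')'\le Ct\bigl(M+\textstyle\frac1{2\pi}\int|\nabla g|\,d\theta\bigr),
\]
and interior gradient estimates for positive supersolutions (weak Harnack), gives $M(t)\le Cg(0)$ for $t\le a$. In this route the weak Harnack inequality for positive supersolutions of uniformly elliptic equations with bounded lower-order terms is the key tool, and verifying the differential inequality with uniform constants is the step to check carefully.
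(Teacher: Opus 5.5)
Your final route works. It coincides with the paper up to the differential inequality and differs afterwards.

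\textbf{Common first step.} Both arguments use boundary pseudoconvexity and the Lipschitz dependence of the Levi form (this is where $C^3$ enters) to get $\Delta g\le c_0g+b\cdot\nabla g$ on a fixed disc, with constants independent of $f$.

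\textbf{Where the routes split.} The paper keeps the cross term in the exact form $2\Ree\bigl((\partial r(z)X)E(z;X)\bigr)$, with $E$ conjugate-linear and $C^1$ in $z$, so its drift has bounded $C^1$ norm. This lets it integrate against the explicit adjoint barrier $\Psi_t=\log(t/|\zeta|)-K(t-|\zeta|)$ and obtain $M(t)\le g(0)/(1-Kt)$ from Green's identity alone. You bound the cross term crudely by $C\|f'\|\,|\partial_\zeta g|$, so your drift is only bounded and measurable. Integration by parts is then unavailable, and you must control $\nabla g$ itself, which is what brings in weak Harnack.

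\textbf{The step to make precise.} Pointwise gradient bounds for positive supersolutions in terms of $g(0)$ are false. What you need is the integral bound $\int_{\D_\tau}|\nabla g|\,dA\le C\tau g(0)$, which you can get in two steps:
\begin{itemize}
\item Testing the inequality with $\eta^2/g$ gives the scale-invariant bound $\int_{\D_\tau}|\nabla\log g|^2\,dA\le C$.
\item Trudinger's weak Harnack inequality for divergence-form supersolutions with bounded lower-order terms holds for every finite exponent in dimension two. With exponent $2$ it gives $\|g\|_{L^2(\D_\tau)}\le C\tau\inf_{\D_{\tau/2}}g\le C\tau g(0)$. Cauchy--Schwarz against the first bound then yields the integral estimate.
\end{itemize}
With this, $tM'(t)=\frac1{2\pi}\int_{\D_t}\Delta g\,dA\le C\int_{\D_t}(g+|\nabla g|)\,dA\le Ctg(0)$, hence $M(t)\le(1+Ct)g(0)$.

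\textbf{Comparison.} Your version tolerates a drift with no regularity, at the cost of quoting De Giorgi--Nash--Moser type theory. The paper's argument is self-contained and explicit. The Diederich--Forn\ae ss and Riesz-decomposition paragraphs play no role in the argument that actually closes and should be dropped.
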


\begin{proof}
We first prove the boundary Levi estimate. Near $\partial\Omega$ put
\[
\nu(z)=
\frac{(r_{\bar z_1}(z),\ldots,r_{\bar z_N}(z))}
{\sum_{j=1}^N|r_{z_j}(z)|^2},
\qquad
\partial r(z)\nu(z)=1.
\]
For $X\in\C^N$ set
\[
E(z;X)=
L_r(z;\nu(z),X)
-
\frac12\overline{\partial r(z)X}\,L_r(z;\nu(z),\nu(z)).
\]
Then $E(z;\cdot)$ is conjugate-linear. Pseudoconvexity on the boundary, together with the $C^3$ regularity of $r$, gives
\begin{equation}
L_r(z;X)
\ge
-C(-r(z))\|X\|^2
+2\operatorname{Re}\bigl((\partial r(z)X)E(z;X)\bigr)
\label{eq:crit-26}
\end{equation}
for $z$ near the boundary. The coefficients of $E$ and their first derivatives are bounded. Extend the coefficients of $E$ to $\Omega$ with bounded first derivatives. After enlarging $C$, the same inequality holds on all of $\Omega$.

For completeness, the estimate follows by writing
\[
\eta=\partial r(z)X,
\qquad
Y=X-\eta\nu(z),
\]
so that $\partial r(z)Y=0$. If $p$ is the nearest boundary point to $z$, then $\|z-p\|\le C(-r(z))$. Projecting $Y$ to the complex tangent space of $\partial\Omega$ at $p$ changes it by $O((-r(z))\|Y\|)$. Since the Levi form is nonnegative on that tangent space and its coefficients are Lipschitz,
\[
L_r(z;Y)\ge-C(-r(z))\|Y\|^2.
\]
The exact identity
\[
L_r(z;X)=L_r(z;Y)+2\operatorname{Re}\bigl(\eta E(z;X)\bigr)
\]
then gives \eqref{eq:crit-26}.

Now let $f\in\mathcal O(\D,\Omega)$ and $g=-r\circ f>0$. Since $f$ is holomorphic,
\[
g_\zeta=-\partial r(f)f',
\qquad
\Delta g=-4L_r(f;f').
\]
Applying \eqref{eq:crit-26} with $X=f'(\zeta)$ gives
\begin{equation}
\Delta g
\le
4Cg\|f'\|^2+8\operatorname{Re}\bigl(g_\zeta E(f;f')\bigr).
\label{eq:crit-27}
\end{equation}
Write
\[
E(f(\zeta);f'(\zeta))=\alpha(\zeta)+i\beta(\zeta).
\]
Since $g$ is real-valued,
\[
8\operatorname{Re}\bigl(g_\zeta E(f;f')\bigr)
=4\alpha g_x+4\beta g_y.
\]
Because $\Omega$ is bounded, Cauchy's estimates give uniform bounds for $f'$ and $f''$ on $\D_{3/4}$. The coefficients of $E$ and their first derivatives are bounded, hence $\alpha$ and $\beta$ have uniformly bounded $C^1$ norms there. Thus, on $\D_{3/4}$,
\begin{equation}
\Delta g\le b_1g_x+b_2g_y+c_0g,
\label{eq:crit-28}
\end{equation}
where the $C^1$ norms of $b_1,b_2$ and the $L^\infty$ norm of $c_0$ are bounded independently of $f$.

It remains to use a one-variable barrier. If a nonnegative $C^2$ function $u$ on $\D$ satisfies
\[
\Delta u\le b_1u_x+b_2u_y+cu
\]
with the same uniform coefficient bounds, then there are $K>0$ and $a>0$, depending only on those bounds, such that
\begin{equation}
\frac1{2\pi}\int_0^{2\pi}u(te^{i\theta})\,d\theta
\le
\frac{u(0)}{1-Kt},
\qquad 0<t\le a.
\label{eq:crit-29}
\end{equation}
Indeed, for fixed $t$ put
\[
\Psi_t(\zeta)=\log\frac{t}{|\zeta|}-K(t-|\zeta|),
\qquad 0<|\zeta|<t.
\]
The formal adjoint of
\[
L=\Delta-b_1\partial_x-b_2\partial_y-c
\]
is
\[
L^*=\Delta+b_1\partial_x+b_2\partial_y
+\bigl((b_1)_x+(b_2)_y-c\bigr).
\]
On the punctured disc put
\[
F_t=\Delta\Psi_t
+b_1(\Psi_t)_x+b_2(\Psi_t)_y
+\bigl((b_1)_x+(b_2)_y-c\bigr)\Psi_t.
\]
If $s=|\zeta|$ and $Kt\le1/2$, then
\[
\Delta\Psi_t=\frac{K}{s},
\qquad
\bigl((\Psi_t)_x^2+(\Psi_t)_y^2\bigr)^{1/2}\le\frac1s,
\qquad
0\le\Psi_t\le\log\frac ts\le\frac ts.
\]
Thus, if the $C^1$ norms of $b_1,b_2$ and the $L^\infty$ norm of $c$ are at most $M$,
\[
F_t(\zeta)\ge\frac{K-CM-CMt}{|\zeta|}.
\]
Choose $K$ sufficiently large in terms of $M$, and then choose $a>0$ sufficiently small. We then have $F_t\ge0$ for $0<t\le a$. Since
\[
\Delta\log\frac t{|\zeta|}=-2\pi\delta_0
\]
in the sense of distributions,
\[
L^*\Psi_t=-2\pi\delta_0+F_t.
\]
Green's identity on $\{\epsilon<|\zeta|<t\}$, followed by $\epsilon\downarrow0$, gives
\[
-\int_{\partial\D_t}u\,\partial_\nu\Psi_t\,ds
\le2\pi u(0).
\]
Since
\[
-\partial_\nu\Psi_t=\frac1t-K
\qquad\text{on }\partial\D_t,
\]
we obtain \eqref{eq:crit-29}. After decreasing $a$, we may assume $Ka\le1/2$, and hence the right-hand side of \eqref{eq:crit-29} is at most $2u(0)$.

Apply this statement to a fixed rescaling of $g$ from $\D_{3/4}$ to $\D$. This gives \eqref{eq:crit-25}.
\end{proof}

\begin{proposition}[Normal self-improvement: general pseudoconvex case]\label{prop:self-general}
Let $\Omega\Subset\C^N$ be a bounded pseudoconvex domain and let $r\in C^q(U)$ be a defining function, where
\[
q\ge3,
\qquad
q\varepsilon\ge1.
\]
Assume the power estimate \eqref{eq:crit-13}. Then there are constants $c,A>0$ such that the normal estimate \eqref{eq:crit-16} holds for $z$ sufficiently close to $\partial\Omega$ and every $X\in\C^N$.
\end{proposition}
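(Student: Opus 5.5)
The plan is to rerun the proof of Proposition~\ref{prop:self-psh} word for word. The single place where plurisubharmonicity entered, the superharmonic mean inequality \eqref{eq:crit-18}, will be replaced by the uniform disc mean estimate of Lemma~\ref{lem:disc-mean}. Since $q\ge3$, the defining function $r$ is $C^3$, so that lemma applies. It gives $a\in(0,1/4)$ and $C>0$ such that, for every $f\in\mathcal O(\D,\Omega)$ with $g=-r\circ f$,
\[
\frac1{2\pi}\int_0^{2\pi}g(te^{i\theta})\,d\theta\le C\,g(0)=C\delta,\qquad 0<t\le a .
\]
This is \eqref{eq:crit-18} up to a constant factor and on a fixed smaller range of radii. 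All later steps tolerate both changes.

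Step~1 becomes the following. The infinitesimal contraction bound \eqref{eq:crit-17} uses only \eqref{eq:crit-14}, so it is unchanged on $\D_a$; shrink $a$ if needed so the factor $(1-|\zeta|^2)^{-1}$ is bounded there. Integrating the new mean estimate over $t\in(0,a]$ gives $\int_{\D_a}g\,dA\le C\delta$. Then \eqref{eq:crit-17} and the subharmonicity of $\|f'\|^{1/\varepsilon}$ give \eqref{eq:crit-19} on $|\zeta|\le a/2$, exactly as before. Step~2 uses only Taylor's theorem with $r\in C^q$, $m=q-1$, and $(m+1)\varepsilon\ge1$, together with Cauchy estimates. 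It therefore goes through verbatim and yields \eqref{eq:crit-22} and \eqref{eq:crit-23}. Here $C_0$ in $L=\lceil C_0\log(A/\delta)\rceil$ is chosen so that $C\delta^\varepsilon2^{-L}\le\delta$.

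In Step~3 the averaged polynomial $B(t)$ is defined for $|t|\le a/4$. Since $a/4<a$, the new mean estimate applies on every circle $|\zeta|=t$ used. Hence
\[
|B(t)|\le\frac1{2\pi}\int_0^{2\pi}g(te^{i\theta})\,d\theta+C\delta\le C\delta .
\]
The Bernstein estimate on $[-a/4,a/4]$ then gives \eqref{eq:crit-24}. Step~4, passing to the Kobayashi metric by taking the infimum over admissible discs, is identical and yields \eqref{eq:crit-16}.

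The step needing the most care is checking that the constants are uniform in $f$. This uniformity is supplied by Lemma~\ref{lem:disc-mean}, whose constants depend only on $\Omega$ and $r$. The one point to watch is the radius bookkeeping: the range $0<t\le a$ from the lemma must be compatible with the discs $\D_a$, $\D_{a/2}$ and $\D_{a/4}$ used in Steps~1--3. Taking the minimum of the two values of $a$ at the outset settles this.
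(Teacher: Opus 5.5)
Your proposal is correct and matches the paper's proof: the paper likewise reruns the plurisubharmonic argument verbatim, replacing only the superharmonic mean inequality \eqref{eq:crit-18} by Lemma~\ref{lem:disc-mean} on a fixed parameter disc. Your explicit radius bookkeeping and the choice of $C_0$ simply spell out details the paper leaves implicit.
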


\begin{proof}
Let $f\in\mathcal O(\D,\Omega)$, put $g=-r\circ f$, and let $\delta=g(0)$. The uniform disc mean estimate gives
\[
\frac1{2\pi}\int_0^{2\pi}g(te^{i\theta})\,d\theta\le C\delta
\]
on a fixed parameter disc. This is the only place where the proof of the plurisubharmonic case used the assumption that $r$ itself was plurisubharmonic.

Consequently, the argument proving \eqref{eq:crit-19} is unchanged: on a fixed smaller parameter disc, the power estimate shows that the image of $f$ lies in a Euclidean ball of radius $O(\delta^\varepsilon)$ around $f(0)$. Since $q\varepsilon\ge1$, the Taylor remainder of order $q-1$ is $O(\delta)$. This is sufficient: the logarithmic-degree approximation has error $O(\delta)$, and the extraction of the first Fourier coefficient together with the Bernstein estimate proceeds exactly as in the preceding proposition. They again give
\begin{equation}\label{eq:crit-general-disc}
\left|
\sum_{j=1}^N r_{z_j}(z)f_j'(0)
\right|
\le
C(-r(z))\log\frac{A}{-r(z)}.
\end{equation}
Passing to admissible discs in the definition of the Kobayashi--Royden metric proves \eqref{eq:crit-16}.
\end{proof}

\begin{corollary}
Under the hypotheses of either of the two preceding propositions, $\Omega$ is Kobayashi complete. More precisely, for every fixed $z_0\in\Omega$,
\[
k_\Omega(z_0,z)
\ge
c\log\log\frac{A}{-r(z)}-C_{z_0}
\]
for $z$ sufficiently close to $\partial\Omega$.
\end{corollary}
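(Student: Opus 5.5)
The plan is to combine the normal estimate from either self-improvement proposition with the log--log completeness criterion, Proposition~\ref{prop:criterion}. Under the hypotheses of Proposition~\ref{prop:self-psh} or Proposition~\ref{prop:self-general}, there are constants $c,A>0$ and a $t_0>0$ such that the normal estimate \eqref{eq:crit-16} holds whenever $0<-r(z)<t_0$ and $X\in\C^N$. This inequality is exactly the hypothesis \eqref{eq:crit-loglog} of the criterion. So the work consists of checking that the remaining hypotheses of Proposition~\ref{prop:criterion} hold for the restriction $r|_{\Omega}$.

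The regularity conditions are immediate. Since $r\in C^q(U)$ with $q\ge2$ and $U$ is a neighborhood of $\overline\Omega$, we have $r\in C^1(\Omega)\cap C(\overline\Omega)$. Moreover $r<0$ on $\Omega=\{r<0\}$, and $r=0$ on $\partial\Omega$ by continuity, because $dr\neq0$ there. The phrase ``$z$ sufficiently close to $\partial\Omega$'' in the self-improvement propositions must also be matched with ``$0<-r(z)<t_0$'' in the criterion. For this I would use that $-r\asymp\delta_\Omega$ near the boundary, together with compactness of $\overline\Omega$: a small sublevel band $\{0<-r<t_0\}$ lies inside the region where \eqref{eq:crit-16} was proved. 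Here $t_0$ is decreased if necessary.

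With these checks done, Proposition~\ref{prop:criterion} gives Kobayashi completeness directly. Its estimate \eqref{eq:crit-distance-point} is precisely the asserted lower bound $k_\Omega(z_0,z)\ge c\log\log\frac{A}{-r(z)}-C_{z_0}$ near $\partial\Omega$, with $c$ renamed from the constant $C$ there.

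No step is a serious obstacle. The only point needing care is that the constant $A$ produced by the self-improvement is the same $A$ fed into the criterion. One may enlarge $A$ freely, since increasing $A$ only weakens \eqref{eq:crit-16} by a bounded factor near the boundary, where $\log(A/(-r))$ changes only by a bounded ratio.
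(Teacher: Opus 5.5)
Your proposal is correct and is the same argument as the paper's, which simply observes that the normal estimate \eqref{eq:crit-16} is exactly hypothesis \eqref{eq:crit-loglog} of Proposition~\ref{prop:criterion} and then reads off \eqref{eq:crit-distance-point}. Your extra checks are all fine: the regularity of $r$, $r=0$ on $\partial\Omega$ (which in fact follows from continuity alone, without using $dr\neq0$), and matching ``near the boundary'' with a band $\{0<-r<t_0\}$ by compactness. Your remark about enlarging $A$ is harmless but unnecessary, because the criterion accepts directly the constant $A$ produced by the self-improvement.
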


\begin{proof}
The normal estimate \eqref{eq:crit-16} is exactly the hypothesis of Proposition~\ref{prop:criterion}.
\end{proof}

\begin{remark}[A sharper normal estimate]\label{rem:sharp-normal}
Assume now that $\Omega\Subset\C^N$ is a bounded pseudoconvex domain with
$C^\infty$ boundary, let $r$ be a $C^\infty$ defining function on a neighborhood of $\overline\Omega$, and suppose that for some
$c,\varepsilon>0$,
\[
 \kappa_\Omega(z;X)\ge c\frac{\|X\|}{(-r(z))^\varepsilon},
 \qquad z\in\Omega,\quad X\in\C^N.
\]
The preceding argument already proves completeness.  With the same idea one
can improve the normal estimate further: for sufficiently large $A$,
\begin{equation}\label{eq:sharp-normal}
 \kappa_\Omega(z;X)
 \ge
 c\frac{|\partial r(z)X|}{-r(z)}
 \frac{\log\log(A/(-r(z)))}{\log(A/(-r(z)))}
\end{equation}
for $z$ sufficiently close to $\partial\Omega$.  Consequently, for every
fixed $z_0\in\Omega$,
\begin{equation}\label{eq:sharp-distance}
 k_\Omega(z_0,z)
 \ge
 c\left(\log\log\frac{A}{-r(z)}\right)^2-C_{z_0}
\end{equation}
near the boundary.  This refinement is not needed for completeness.

\medskip
\noindent\emph{Proof.}
We first use the following one-variable estimate.  There is an absolute
constant $C>0$ such that every real polynomial $P(\zeta,\bar\zeta)$ of
total degree at most $d\ge1$ satisfying
\[
 P\ge0,\qquad \Delta P\le0\quad\text{on }\D
\]
satisfies
\begin{equation}\label{eq:super-poly-remark}
 \bigl(|P_x(0)|^2+|P_y(0)|^2\bigr)^{1/2}
 \le C\frac{d+1}{\log(d+2)}P(0).
\end{equation}
To prove this, put $\mu=-\Delta P\ge0$, and let $H$ be the harmonic
function in $\D$ with boundary values $P$.  Green's formula gives
\begin{equation}\label{eq:poly-green-remark}
 P(z)=H(z)+\frac1{2\pi}\int_\D
 \log\left|\frac{1-z\bar w}{z-w}\right|\mu(w)\,dA(w).
\end{equation}
At $z=0$ all terms on the right are nonnegative, and hence
\begin{equation}\label{eq:poly-mass-remark}
 H(0)\le P(0),\qquad
 \frac1{2\pi}\int_\D\log\frac1{|w|}\mu(w)\,dA(w)\le P(0).
\end{equation}
Differentiating \eqref{eq:poly-green-remark} at the origin and using the
standard first-derivative estimate for harmonic functions gives
\begin{equation}\label{eq:poly-grad-remark}
 \bigl(|P_x(0)|^2+|P_y(0)|^2\bigr)^{1/2}
 \le CP(0)+\frac1{2\pi}\int_\D
 \left(\frac1{|w|}-|w|\right)\mu(w)\,dA(w).
\end{equation}
Fix $R=1/4$ and write
\[
 M(t)=\frac1{2\pi}\int_0^{2\pi}\mu(te^{i\theta})\,d\theta,
 \qquad I=\int_0^R M(t)\,dt.
\]
The function $M$ is an even nonnegative real polynomial of degree at most
$d$.  Consider
\[
 S(\theta)=M(R\cos\theta)\sin^2\theta.
\]
This is a nonnegative trigonometric polynomial of degree at most $d+2$.
If $S(\theta)=\sum_{|j|\le D}c_je^{ij\theta}$, then nonnegativity gives
$|c_j|\le c_0$, where $c_0$ is the mean of $S$.  Hence
\[
 \|S\|_\infty\le C(d+1)c_0.
\]
For $0\le t\le R/2$, choose $\theta$ with $t=R\cos\theta$.  Since
$\sin^2\theta\ge3/4$, a change of variables gives
\begin{equation}\label{eq:M-sup-remark}
 \sup_{0\le t\le R/2}M(t)\le C(d+1)I.
\end{equation}
Choose $h=c/(d+1)<R/2$ with $c>0$ sufficiently small.  Then
\eqref{eq:M-sup-remark} implies
\[
 \int_0^hM(t)\,dt\le\frac I2,
 \qquad
 \int_h^R M(t)\,dt\ge\frac I2.
\]
Since $t\mapsto t\log(1/t)$ is increasing on $[0,R]$,
\[
 \int_0^R t\log(1/t)M(t)\,dt
 \ge c\frac{\log(d+2)}{d+1}I.
\]
On the other hand, polar coordinates in \eqref{eq:poly-mass-remark} give
\[
 \int_0^1 t\log(1/t)M(t)\,dt\le P(0).
\]
Therefore
\begin{equation}\label{eq:I-bound-remark}
 I\le C\frac{d+1}{\log(d+2)}P(0).
\end{equation}
The part of the integral in \eqref{eq:poly-grad-remark} over $|w|<R$ is
at most $I$, while on $R\le|w|<1$ one has
\[
 |w|^{-1}-|w|\le C\log\frac1{|w|}.
\]
Using \eqref{eq:poly-mass-remark} and \eqref{eq:I-bound-remark} in
\eqref{eq:poly-grad-remark} proves \eqref{eq:super-poly-remark}.

We now return to an analytic disc $f\in\mathcal O(\D,\Omega)$.  Put
\[
 z=f(0),\qquad g=-r\circ f>0,\qquad \delta=g(0)=-r(z).
\]
Decreasing $\varepsilon$ if necessary, we may assume
$0<\varepsilon\le1/2$.  The proof of
Proposition~\ref{prop:self-general} gives, on a fixed parameter disc,
\begin{equation}\label{eq:sharp-disc-shrink}
 \|f-z\|+\|f'\|+\|f''\|\le C\delta^\varepsilon.
\end{equation}
Choose an integer $m$ with $(m+1)\varepsilon>2$.  Let $T_z$ be the real
Taylor polynomial of $r$ at $z$ of degree $m$, viewed as a polynomial in the ambient variable, and let $P_L$ be the
Taylor polynomial of $f$ at $0$ of degree
\[
 L=\left\lceil C_0\log\frac A\delta\right\rceil,
\]
where $C_0$ is sufficiently large.  On a fixed smaller parameter disc,
Taylor's theorem, \eqref{eq:sharp-disc-shrink}, and Cauchy's estimates
give a real polynomial
\[
 Q_0=T_z\circ P_L
\]
such that
\begin{equation}\label{eq:sharp-Q0}
 \|Q_0-r\circ f\|_{C^0}\le C\delta^2,
 \qquad
 \deg Q_0\le C\log\frac A\delta,
 \qquad
 \partial_\zeta Q_0(0)=\partial r(z)f'(0).
\end{equation}
Write
\[
 T=\log\frac A\delta.
\]
For fixed $0\le t\le a/4$, where $a$ is the radius in
Lemma~\ref{lem:disc-mean}, choose $C_1$ sufficiently large and put
\[
 \Theta_t(\theta)=-Q_0(te^{i\theta})+C_1\delta^2.
\]
By \eqref{eq:sharp-Q0}, $\Theta_t$ is a nonnegative trigonometric
polynomial of degree at most $CT$.  Its mean is at most $C\delta$ by
Lemma~\ref{lem:disc-mean}.  For a nonnegative trigonometric polynomial,
every Fourier coefficient is bounded by its mean; hence
\[
 \sup_\theta\Theta_t(\theta)\le C\delta T.
\]
Again using \eqref{eq:sharp-Q0}, we obtain
\begin{equation}\label{eq:sharp-depth}
 \sup_{|\zeta|\le a/4}g(\zeta)\le C\delta T.
\end{equation}

Choose a fixed $b\in(0,a/8)$.  By \eqref{eq:sharp-depth}, all centers
$f(\zeta)$ with $|\zeta|\le2b$ are sufficiently close to the boundary
when $\delta$ is small.  Apply the already proved disc estimate
\eqref{eq:crit-general-disc} to $f\circ\alpha_\zeta$, where $\alpha_\zeta$ is a
disc automorphism satisfying
\[
 \alpha_\zeta(0)=\zeta,
 \qquad |\alpha_\zeta'(0)|=1-|\zeta|^2.
\]
Since
\[
 g_\zeta=-\partial r(f)f',
\]
we obtain
\[
 \bigl(|g_x(\zeta)|^2+|g_y(\zeta)|^2\bigr)^{1/2}
 \le Cg(\zeta)\log\frac A{g(\zeta)}.
\]
The function $x\mapsto x\log(A/x)$ is increasing for small $x$.
Together with \eqref{eq:sharp-depth}, this gives
\begin{equation}\label{eq:sharp-gradient}
 \bigl(|g_x(\zeta)|^2+|g_y(\zeta)|^2\bigr)^{1/2}
 \le C\delta T^2,
 \qquad |\zeta|\le2b.
\end{equation}
Using the Levi estimate \eqref{eq:crit-26}, the identity
$\Delta g=-4L_r(f;f')$, and \eqref{eq:sharp-disc-shrink},
\eqref{eq:sharp-depth}, \eqref{eq:sharp-gradient}, we get
\[
 \Delta g\le C\delta^{1+\varepsilon}T^2
 \qquad\text{on }\D_{2b}.
\]
Set $\eta=\varepsilon/2$.  Since
$\delta^{\varepsilon/2}T^2$ is bounded as $\delta\downarrow0$,
\begin{equation}\label{eq:sharp-laplacian}
 \Delta g\le C\delta^{1+\eta}
 \qquad\text{on }\D_{2b}.
\end{equation}

Increase $m$, if necessary, so that $(m+1)\varepsilon>3$.  Repeating the
polynomial approximation, now in the $C^2$ norm on $\D_b$, gives a real
polynomial
\[
 Q=-T_z\circ P_L
\]
with
\begin{equation}\label{eq:sharp-C2}
\begin{gathered}
 \|Q-g\|_{C^2(\D_b)}\le C\delta^2,
 \qquad
 Q(0)=\delta,
 \qquad
 \deg Q\le CT,\\
 Q_x(0)=g_x(0),
 \qquad
 Q_y(0)=g_y(0).
\end{gathered}
\end{equation}
Indeed, if $R_m=r-T_z$, then for $j=0,1,2$,
\[
 |D^jR_m(z+h)|\le C\|h\|^{m+1-j},
\]
and \eqref{eq:sharp-disc-shrink} shows that
$\|D^j(R_m\circ f)\|_{C^0(\D_b)}=O(\delta^{(m+1)\varepsilon})$.
The first two derivatives of the Taylor tail $f-P_L$ are bounded by
\[
 C\delta^\varepsilon(L+1)^j2^{-L},\qquad j=0,1,2,
\]
which is $O(\delta^3)$ for $C_0$ sufficiently large.  This proves
\eqref{eq:sharp-C2}.

Choose a fixed $B>0$ sufficiently large and define on $\D_b$
\[
 P(\zeta)=Q(\zeta)+B\delta^2
 +B\delta^{1+\eta}(b^2-|\zeta|^2).
\]
By \eqref{eq:sharp-C2}, $P\ge0$ on $\D_b$.  By
\eqref{eq:sharp-laplacian} and \eqref{eq:sharp-C2},
\[
 \Delta P
 \le C\delta^{1+\eta}+C\delta^2-4B\delta^{1+\eta}\le0
\]
for all sufficiently small $\delta$, after increasing $B$.  Moreover,
\[
 P(0)\le2\delta,
 \qquad
 P_x(0)=g_x(0),\qquad P_y(0)=g_y(0),
 \qquad
 \deg P\le CT.
\]
Rescale $\D_b$ to the unit disc and apply
\eqref{eq:super-poly-remark}.  Since
$\log(CT+2)\asymp\log T$ for large $T$, we obtain
\[
 \bigl(|g_x(0)|^2+|g_y(0)|^2\bigr)^{1/2}
 \le C\delta\frac{T}{\log T}.
\]
Since
\[
\bigl(|g_x(0)|^2+|g_y(0)|^2\bigr)^{1/2}
=2|g_\zeta(0)|=2|\partial r(z)f'(0)|,
\]
this gives
\begin{equation}\label{eq:sharp-disc-normal}
 |\partial r(z)f'(0)|
 \le C\delta
 \frac{\log(A/\delta)}{\log\log(A/\delta)}.
\end{equation}
Passing to admissible discs in the definition of the Kobayashi--Royden
metric proves \eqref{eq:sharp-normal}.

To obtain the distance estimate, define near the boundary
\[
 H(z)=\frac12\left(\log\log\frac{A}{-r(z)}\right)^2.
\]
For a real tangent vector $V$,
\[
 |dH_z(V)|
 \le
 C\frac{\log\log(A/(-r(z)))}{(-r(z))\log(A/(-r(z)))}
 |\partial r(z)V|.
\]
Hence \eqref{eq:sharp-normal} gives $|dH_z(V)|\le C\kappa_\Omega(z;V)$ near the boundary. Extend $H$ to a $C^1$ function on the interior as in the proof of Proposition~\ref{prop:criterion}; on the remaining compact set its differential is bounded and $\kappa_\Omega$ dominates the Euclidean norm. Integrating along analytic-disc chains therefore gives
\[
 |H(z)-H(z_0)|\le Ck_\Omega(z_0,z),
\]
which is \eqref{eq:sharp-distance}. \hfill$\square$
\end{remark}

\subsection{Real-analytic boundary}
\begin{proof}[Proof of Corollary~\ref{cor:real-analytic-intro}]
Diederich and Forn\ae ss proved that there are constants $c,s>0$ such that, for $z$ sufficiently close to $\partial\Omega$,
\[
 \kappa_\Omega(z;X)\ge c\frac{\|X\|}{\delta_\Omega(z)^s},
 \qquad X\in\C^N;
\]
see~\cite[Theorem~4, p.~577]{DiederichFornaess79}.  Since a real-analytic boundary is $C^\infty$, choose a $C^\infty$ defining function $r$, and then choose an integer $q\ge3$ so large that $qs\ge1$.  Proposition~\ref{prop:self-general} applies with $\varepsilon=s$ and gives
\[
 \kappa_\Omega(z;X)
 \ge
 c\,
 \frac{|\partial r(z)X|}
 {(-r(z))\log(A/(-r(z)))}
\]
near the boundary.  Proposition~\ref{prop:criterion} then implies that $\Omega$ is Kobayashi complete.
\end{proof}

\section{Finite type: a power lower bound for the Kobayashi metric}\label{sec:finite-type-power}

The local estimate below is the estimate that was sought in Cho's paper~\cite{Cho}.  We do not use Cho's argument.  The proof given here is self-contained once the uniform disc mean estimate above and D'Angelo's type--multiplicity inequality are available.

The algebraic ingredient is D'Angelo's type--multiplicity inequality.  Let
$\mathcal O_0$ denote the ring of holomorphic germs at $0\in\C^N$.
For a proper ideal $I\subset\mathcal O_0$, define
\[
 \Delta(I)=\sup_\gamma
 \frac{\min_{h\in I}\operatorname{ord}_0(h\circ\gamma)}
 {\operatorname{ord}_0\gamma},
\]
where $\gamma$ ranges over nonconstant holomorphic curve germs through
the origin.  If $\Delta(I)<\infty$, then
\begin{equation}\label{eq:ideal-multiplicity}
 \dim_\C\mathcal O_0/I\le\Delta(I)^N.
\end{equation}
This is D'Angelo's type--multiplicity inequality
\cite[Theorem~2.7]{DAngelo}.  We shall use it only in the following
form: once we construct an ideal whose type is at most the boundary type
$T$, the quotient algebra has uniformly bounded finite dimension.

\begin{proposition}\label{prop:finite-type-power}
Suppose that $\Omega\Subset\C^N$ has $C^\infty$ pseudoconvex
boundary and that $p\in\partial\Omega$ has finite D'Angelo
one-type. Then there exist a neighborhood $V$ of $p$ and
constants $c,\varepsilon>0$ such that
\begin{equation}\label{eq:weight-power}
 \kappa_\Omega(z;X)\ge
 c\frac{\|X\|}{\delta_\Omega(z)^\varepsilon},
 \qquad z\in\Omega\cap V,\quad X\in\C^N.
\end{equation}
\end{proposition}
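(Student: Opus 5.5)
We argue by contradiction, using a normal-families/rescaling argument driven by Lemma~\ref{lem:disc-mean} and closed off by D'Angelo's inequality \eqref{eq:ideal-multiplicity}. Fix a $C^\infty$ defining function $r$ near $p$ and let $T$ be an even integer with $T\ge\Delta_1(p)$. By upper semicontinuity of D'Angelo type (D'Angelo's theorem), after shrinking $V$ every $q\in\partial\Omega\cap V$ has $\Delta_1(q)\le T$. Let $f\in\mathcal O(\D,\Omega)$ with $f(0)=z\in V$, and set $g=-r\circ f$, $\delta=g(0)$. By the Kobayashi normalization it suffices to show
\[
\|f'(0)\|\le C\delta^{\varepsilon}
\]
uniformly in $f$, for some $\varepsilon>0$ (we expect $\varepsilon\approx 1/T$ up to a dimensional factor).

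\emph{Step 1: the disc stays shallow on average.} By Lemma~\ref{lem:disc-mean},
\[
\frac1{2\pi}\int_0^{2\pi}g(te^{i\theta})\,d\theta\le C\delta,\qquad 0<t\le a.
\]
Also $f$ and its derivatives are uniformly bounded on $\D_{3/4}$ by Cauchy's estimates. Hence on $\D_a$ the function $r\circ f$ is bounded in $L^1$ by $C\delta$ and is bounded in every $C^k$ norm. Interpolation (Landau--Kolmogorov type) then gives $|r\circ f|\le C_k\delta^{1-\eta_k}$ on $\D_{a/2}$ for all $k$, with small loss $\eta_k$.

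\emph{Step 2: finite-jet polarization and a family of ideals.} Write the Taylor expansion of $r$ at the nearest boundary point $q$ in holomorphic coordinates. Modulo $\operatorname{Re}$ of a holomorphic polynomial, this is
\[
r(q+h)=\operatorname{Re}(\text{hol})+\sum_{|\alpha|,|\beta|\ge1,\ |\alpha|+|\beta|\le M}c_{\alpha\beta}h^\alpha\bar h^\beta+O(|h|^{M+1}),
\]
with $M$ chosen large in terms of $T$ and $N$. Polarize the mixed part as $\sum_j|F_j(h)|^2-\sum_j|G_j(h)|^2$ for holomorphic polynomials $F_j,G_j$ of bounded degree. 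Following D'Angelo's construction, for each unitary $U$ in a compact unitary group we form the ideal
\[
I_U(q)=\bigl(h_{\text{normal}},\,F-UG\bigr).
\]
D'Angelo's theorem shows $\Delta(I_U(q))\le C(T)$. By \eqref{eq:ideal-multiplicity}, $\dim\mathcal O_0/I_U(q)\le K$ uniformly in $q$ and $U$. Consequently some monomial power $\mathfrak m^{K}$ lies in $I_U(q)$, with an effective Nullstellensatz representation whose coefficients are bounded uniformly by compactness in $(q,U)$.

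\emph{Step 3: from smallness of $r\circ f$ to smallness of $f-z$.} Along the disc, put $\phi=f-q$. Step~1 forces the complex-normal component $\partial r(q)\phi$ to be small on average, by extracting Fourier coefficients as in Proposition~\ref{prop:self-psh}. Positivity from pseudoconvexity then bounds $\sum|F_j\circ\phi|^2-\sum|G_j\circ\phi|^2$, and from this we extract a unitary $U$ (depending on the disc) with $|(F-UG)\circ\phi|\lesssim\delta^{c}$ on $\D_{a/2}$. The membership $\mathfrak m^K\subset I_U$ then gives $\|\phi\|^K\lesssim\delta^{c}$, so $\|f-q\|\lesssim\delta^{c/K}$ on a smaller disc. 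Cauchy's estimate turns this into $\|f'(0)\|\lesssim\delta^{\varepsilon}$ with $\varepsilon=c/K$.

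The main obstacle is Step~3, specifically producing the unitary $U$ from a merely averaged, one-sided bound. The polarization only controls a difference of squared norms, and pseudoconvexity gives positivity only up to error terms of the size appearing in \eqref{eq:crit-26}. To get around this, I would first use Step~1 and Bernstein-type polynomial estimates (as in Step~3 of Proposition~\ref{prop:self-psh}) to pass to a polynomial approximant of $\phi$ of bounded degree. I would then choose $U$ by a compactness/minimization argument over the unitary group, so that $\|F\circ\phi-UG\circ\phi\|$ is controlled by $\|F\circ\phi\|^2-\|G\circ\phi\|^2$ plus errors.
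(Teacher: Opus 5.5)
Your skeleton uses the same ingredients as the paper: the disc mean estimate, a polarization $R=2\Ree h+\|F\|^2-\|G\|^2$, the unitary ideals $I_U$, and D'Angelo's inequality. Your Step~1 is fine: nonnegativity of $g$, the $L^1$ bound and the $C^k$ bounds do give $\sup_{\D_{a/2}}g\le C_k\delta^{1-\eta_k}$.

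\textbf{The gap: producing $U$.} The gap is in Step~3, exactly where you flag it. Minimizing over the compact unitary group gives a minimizer but no bound on the minimum. Neither pointwise smallness of $\|F\circ\phi\|^2-\|G\circ\phi\|^2$ nor ``positivity from pseudoconvexity'' produces a \emph{single} $U$ with $(F-UG)\circ\phi$ small.

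The missing idea is to work with Taylor coefficients. Center at $z$, so $\phi=f-z$ and $\phi(0)=0$, and write $F_z\circ\phi=\sum_j a_j\zeta^j$ and $G_z\circ\phi=\sum_j b_j\zeta^j$. Then $[\zeta^j\bar\zeta^k]\bigl(\|F_z\circ\phi\|^2-\|G_z\circ\phi\|^2\bigr)=\langle a_j,a_k\rangle-\langle b_j,b_k\rangle$. For $j+k\le 2m$ these are exactly mixed Taylor coefficients of $r\circ f$, hence $O(\delta^{1/2})$ by Lemma~\ref{lem:finite-jet-control} with $q=4m$. Nearly equal Gram matrices then give, by the quantitative polar decomposition of Lemma~\ref{lem:gram-matching}, a unitary $U_f$ with $\|a_j-U_fb_j\|\le C\delta^{1/4}$ for $j\le m$.

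The smallness of $h_z\circ\phi$ comes from the pure coefficients of the same jet. It cannot come from the Fourier extraction in Proposition~\ref{prop:self-psh}: Step~1 there starts from the power estimate \eqref{eq:crit-14}, which is what you are trying to prove, so that appeal is circular.

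\textbf{A scale problem.} On the fixed disc $\D_{a/2}$ you do not know that $\phi$ is small; that is the conclusion. So the polarization error $O(\|\phi\|^{M+1})$ is not small there. Likewise, a representation of $\mathfrak m^K$ in $I_U$ with bounded coefficients holds only on a small ball, which $\phi(\D_{a/2})$ need not enter.

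If you instead work on $\D_\rho$ with $\rho\approx\delta^{1/(M+1)}$, Cauchy's estimate costs a factor $\rho^{-1}$. The exponent you get from $\mathfrak m^K\subset I_U$ must then beat $1/(M+1)$. So the jet order must be chosen large compared with the colength bound, a coupling your proposal never makes.

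The paper builds this in. It takes $m>T^N$ and jets of order $2m$. In place of a pointwise Nullstellensatz it uses a jet-level contradiction: the normalized jet maps $B_{f_j}=A_{f_j}^{-1}J_{f_j}$ of Lemma~\ref{lem:jet-composition}, with $\|A_{f_j}^{-1}\|\le Cs_j^{-D}$ and $\varepsilon D=1/4$, converge along a subsequence to a rank-$(m+1)$ map. That limit annihilates the image of $I_U$ in $\mathcal O_0/\mathfrak m^{m+1}$, forcing $m+1\le\dim_\C\mathcal O_0/I_U\le T^N$, a contradiction.

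\textbf{A smaller error.} D'Angelo type is not upper semicontinuous. His standard example in $\C^3$ has type $4$ at a point and type $8$ at nearby boundary points. His theorem gives only a local bound depending on $\Delta_1(p)$ and $N$.

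You can either use that larger bound, or do as the paper does: use finite type only at $p$, and pass to the limit $z_j\to p$ using compactness of the unitary group. The same openness of surjectivity of the relevant finite-dimensional linear maps is what you would need to justify the uniform coefficient bounds in your ``effective Nullstellensatz''.
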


Before proving the proposition, we state separately the elementary finite-jet
facts used in the argument.  The first converts the mean estimate from
the preceding subsection into quantitative control of Taylor
coefficients.

\begin{lemma}\label{lem:finite-jet-control}
Fix $a>0$ and integers $q\ge1$ and $0\le \ell\le q$.  For every
$M,C_0>0$ there are constants $C,\delta_0>0$ with the following
property.  Suppose that $0<\delta\le\delta_0$ and
$u\in C^{q+1}(\D_a)$ satisfies
\[
 u\ge0,\qquad \|u\|_{C^{q+1}(\D_a)}\le M,
\]
and
\begin{equation}\label{eq:finite-jet-mean}
 \int_{\D_\rho}u\,dA\le C_0\delta\rho^2,
 \qquad 0<\rho\le a.
\end{equation}
If
\[
 Q(\zeta,\bar\zeta)=\sum_{j+k\le q}c_{jk}\zeta^j\bar\zeta^k
\]
is the Taylor polynomial of $u$ at $0$ of total degree $q$, then
\begin{equation}\label{eq:finite-jet-coeff}
 |c_{jk}|\le C\delta^{1-\frac{j+k}{q+1}},
 \qquad j+k\le q.
\end{equation}
In particular,
\begin{equation}\label{eq:finite-jet-low}
 |c_{jk}|\le C\delta^{1-\frac{\ell}{q+1}},
 \qquad j+k\le\ell.
\end{equation}
\end{lemma}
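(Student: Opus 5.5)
The plan is to pass from the integral bound \eqref{eq:finite-jet-mean} to a pointwise bound, and then from the pointwise bound to the Taylor coefficients by a scaling argument. The natural scale is
\[
\rho_*=\delta^{1/(q+1)},
\]
because on $\D_{\rho_*}$ the Taylor remainder of $u$ has size $M\rho_*^{q+1}=M\delta$. This matches the size of the mean of $u$ there.

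\emph{Step 1: an averaged bound at scale $\rho_*$.} Fix $\rho=\rho_*$, which lies below $a$ once $\delta_0$ is small. By \eqref{eq:finite-jet-mean} the average of $u$ over $\D_{\rho}$ is at most $C_0\delta/\pi$. By Taylor's theorem,
\[
|u(\zeta)-Q(\zeta,\bar\zeta)|\le CM|\zeta|^{q+1}\le CM\delta
\qquad\text{on }\D_\rho.
\]
Hence
\[
\frac{1}{|\D_\rho|}\int_{\D_\rho}|Q|\,dA
\le \frac{1}{|\D_\rho|}\int_{\D_\rho}u\,dA+CM\delta\le C\delta.
\]
Here we used $u\ge0$, so that $|Q|\le u+CM\delta$ and the average of $|Q|$ is controlled. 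Nonnegativity is used at exactly this point: it converts a bound on the integral of $u$ into a bound on the $L^1$ average of $|Q|$, and not merely of $Q$.

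\emph{Step 2: rescaling and equivalence of norms.} Write $Q_\rho(\xi)=Q(\rho\xi,\rho\bar\xi)$ for $\xi\in\D$. Its coefficients are
\[
c_{jk}\rho^{j+k}.
\]
The space of real-coefficient (or complex) polynomials in $\xi,\bar\xi$ of total degree at most $q$ is finite dimensional, with dimension depending only on $q$. On this space the $L^1(\D)$ norm and the maximum of the coefficient moduli are equivalent, with constants depending only on $q$. Step 1 gives $\|Q_\rho\|_{L^1(\D)}\le C\delta$, and therefore
\[
|c_{jk}|\rho^{j+k}\le C\delta .
\]
Substituting $\rho=\delta^{1/(q+1)}$ yields
\[
|c_{jk}|\le C\delta^{1-\frac{j+k}{q+1}},
\]
which is \eqref{eq:finite-jet-coeff}.

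\emph{Step 3: the low-order bound.} Since $\delta\le\delta_0\le1$, the exponent $1-\frac{j+k}{q+1}$ decreases as $j+k$ grows. For $j+k\le\ell$ this gives $\delta^{1-(j+k)/(q+1)}\le\delta^{1-\ell/(q+1)}$, which is \eqref{eq:finite-jet-low}.

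There is no real obstacle in this argument. The one point needing care is in Step 1: the estimate must be applied at a single radius $\rho\le a$ chosen as a function of $\delta$, and the constant in the finite-dimensional norm equivalence must be uniform because it refers to the fixed unit disc. This is why we rescale to $\D$ before comparing norms.
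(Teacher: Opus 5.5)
Your proposal is correct and follows the paper's proof essentially step for step. Both arguments take the scale $\rho=\delta^{1/(q+1)}$, bound the Taylor remainder by $O(\delta)$ on $\D_\rho$, use $u\ge0$ to get $\|Q(\rho\,\cdot)\|_{L^1(\D)}\le C\delta$, and then apply the finite-dimensional equivalence of the coefficient and $L^1(\D)$ norms to obtain $|c_{jk}|\rho^{j+k}\le C\delta$. The low-order bound then follows from $\delta\le1$.
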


\begin{proof}
Put
\[
 \rho=\delta^{1/(q+1)}.
\]
After decreasing $\delta_0$, we may assume $\rho\le a$.  Taylor's
theorem and the $C^{q+1}$ bound give, for $|\zeta|\le1$,
\[
 |Q(\rho\zeta)-u(\rho\zeta)|\le C\rho^{q+1}=C\delta.
\]
By \eqref{eq:finite-jet-mean},
\[
 \int_\D u(\rho\zeta)\,dA(\zeta)
 =\rho^{-2}\int_{\D_\rho}u(w)\,dA(w)
 \le C_0\delta.
\]
Since $u\ge0$,
\[
 \int_\D |Q(\rho\zeta)|\,dA(\zeta)\le C\delta.
\]
On the finite-dimensional space of polynomials in $\zeta,\bar\zeta$
of total degree at most $q$, the coefficient norm is bounded by a
constant times the $L^1(\D)$ norm.  Applied to
\[
 Q(\rho\zeta)=\sum_{j+k\le q}c_{jk}\rho^{j+k}
 \zeta^j\bar\zeta^k,
\]
this gives
\[
 |c_{jk}|\rho^{j+k}\le C\delta.
\]
Substituting the choice of $\rho$ proves
\eqref{eq:finite-jet-coeff}; \eqref{eq:finite-jet-low} follows because
$0<\delta\le1$.
\end{proof}

The next lemma is the finite-dimensional polarization that will allow us
to pass from a real Taylor polynomial to holomorphic data.

\begin{lemma}\label{lem:hermitian-decomposition}
Let $K$ be compact and let $R_z(v,\bar v)$, $z\in K$, be a continuous
family of real-valued polynomials on $\C^N$ of degree at most $d\ge2$,
with $R_z(0)=0$.  Then there exist a finite-dimensional Hermitian space
$H$ and holomorphic polynomials
\[
 h_z:\C^N\to\C,
 \qquad
 F_z,G_z:\C^N\to H,
\]
all vanishing at $0$, such that
\begin{equation}\label{eq:hermitian-decomposition}
 R_z(v,\bar v)=2\operatorname{Re} h_z(v)+\|F_z(v)\|^2-\|G_z(v)\|^2.
\end{equation}
The coefficients of $h_z,F_z,G_z$ may be chosen to depend continuously
on $z$.  If, in addition, $K$ is contained in a smooth parameter
manifold and the coefficients of $R_z$ extend smoothly to a
neighborhood of $K$, then the same is true of $h_z,F_z,G_z$.  Moreover,
$H$ may be chosen independently of $z$, and $G_z$ may be chosen
independent of $z$.
\end{lemma}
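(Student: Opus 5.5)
The plan is to polarize monomial by monomial and then treat the mixed terms with the classical polarization identity. Write the real polynomial in multi-index notation as
\[
R_z(v,\bar v)=\sum_{0<|\alpha|+|\beta|\le d} a_{\alpha\beta}(z)\,v^\alpha\bar v^\beta,
\qquad a_{\beta\alpha}=\overline{a_{\alpha\beta}},
\]
where reality of $R_z$ gives the Hermitian symmetry of the coefficients and the coefficients depend continuously (or smoothly) on $z$. The terms are sorted into two groups. Pure terms have $\beta=0$ or $\alpha=0$. Mixed terms have $\alpha\ne0$ and $\beta\ne0$. For the pure terms, we set
\[
h_z(v)=\sum_{0<|\alpha|\le d}a_{\alpha0}(z)v^\alpha .
\]
Then the pure part of $R_z$ equals $h_z+\overline{h_z}=2\operatorname{Re}h_z$, and $h_z(0)=0$.

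For the mixed terms, each unordered pair $\{\alpha,\beta\}$ with $\alpha\neq\beta$ contributes $a_{\alpha\beta}v^\alpha\bar v^\beta+\overline{a_{\alpha\beta}v^\alpha\bar v^\beta}=2\operatorname{Re}(a\,v^\alpha\overline{v^\beta})$. By polarization,
\[
2\operatorname{Re}(a\,p\bar q)=\tfrac12\bigl(|a p+q|^2-|a p-q|^2\bigr).
\]
This is not yet of the required form, because the ``negative'' square $|ap-q|^2$ depends on $z$.

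To achieve $G$ independent of $z$, we use the identity $-|x|^2=|x|^2\cdot(\text{large const})-\ldots$ differently. Fix a constant $\Lambda$ with $\Lambda\ge\sup_K|a_{\alpha\beta}(z)|+1$ for all coefficients. We write
\[
2\operatorname{Re}(a\,p\bar q)=|\,\bar a^{1/2}\ldots|
\]
More concretely, we use
\[
2\operatorname{Re}(a\,p\bar q)+\Lambda(|p|^2+|q|^2)
=\bigl\|\,\text{(Gram factor)}\,\bigr\|^2 .
\]
This holds because the Hermitian form $\begin{pmatrix}\Lambda & a\\ \bar a&\Lambda\end{pmatrix}$ on $(p,q)$ is positive definite when $\Lambda>|a|$. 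Its positive square root $S(z)$ depends continuously on $z$, and also smoothly wherever $a$ is smooth, since the square root is real-analytic on positive definite matrices. This gives the term $\|S(z)(p,q)\|^2$. Hence each pair contributes
\[
\|F^{(\alpha\beta)}_z(v)\|^2-\Lambda\bigl(|v^\alpha|^2+|v^\beta|^2\bigr),
\]
with $F^{(\alpha\beta)}_z=S(z)(v^\alpha,v^\beta)$. The diagonal mixed terms $a_{\alpha\alpha}|v^\alpha|^2$, with $a_{\alpha\alpha}$ real and $\alpha\ne0$, are handled as $(a_{\alpha\alpha}+\Lambda)|v^\alpha|^2-\Lambda|v^\alpha|^2$, using $\sqrt{a_{\alpha\alpha}+\Lambda}$, which is smooth since $a_{\alpha\alpha}+\Lambda\ge1$.

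Collecting terms, we let $H$ be $\C^{n}$ with one coordinate slot for each monomial and each pair. Then $F_z$ is the direct sum of the $F^{(\alpha\beta)}_z$ and the $\sqrt{a_{\alpha\alpha}+\Lambda}\,v^\alpha$. The map $G$ is the direct sum of $\sqrt{\Lambda}\,v^\gamma$, where $\gamma$ is repeated as often as it occurs, padded by zeros so that $G$ and $F$ land in the same $H$. Then $G$ is independent of $z$, all three maps are holomorphic polynomials vanishing at $0$ because every $\alpha,\beta,\gamma$ is nonzero, and $H$ depends only on $N$ and $d$.

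The only delicate step is the regularity claim: the choice of $\Lambda$ and the positive square root. Compactness of $K$ gives a uniform $\Lambda$. When the coefficients extend smoothly to a neighborhood of $K$, we shrink that neighborhood so that the strict inequality $\Lambda>|a|$ persists. Smoothness of the matrix square root on positive definite matrices then transfers the regularity to $F_z$.
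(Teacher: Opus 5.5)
Your argument is correct and is essentially the paper's. Both proofs put the pure terms into $h_z$, make the mixed Hermitian form positive definite by adding a large constant times the identity, and take the positive square root to get $F_z$, with $G$ a fixed multiple of the monomials. The only difference is that the paper does this once for the full Hermitian coefficient matrix $A(z)$ on the vector of monomials $v^\alpha$, $1\le|\alpha|\le d-1$, with $L>\sup_K\|A(z)\|$, whereas you do it blockwise on $2\times 2$ pieces; this costs a larger $H$ with repeated monomials in $G$ but changes nothing essential.

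Two minor points. With the convention $x^*Mx$, your block should be $\begin{pmatrix}\Lambda&\bar a\\ a&\Lambda\end{pmatrix}$ in order to produce $2\operatorname{Re}(a\,p\bar q)$. The abandoned fragments (the first polarization and the line ending in ``$\bar a^{1/2}\ldots$'') should simply be deleted.
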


\begin{proof}
Write
\[
 R_z(v,\bar v)
 =\sum_{1\le |\alpha|+|\beta|\le d}
 c_{\alpha\beta}(z)v^\alpha\bar v^\beta.
\]
Since $R_z$ is real-valued,
\[
 c_{\alpha\beta}(z)=\overline{c_{\beta\alpha}(z)}.
\]
Collect the pure holomorphic terms into
\[
 h_z(v)=\sum_{1\le|\alpha|\le d}c_{\alpha0}(z)v^\alpha.
\]
The pure antiholomorphic terms are then $\overline{h_z(v)}$, so the
remaining part of $R_z$ consists only of mixed monomials.

Let $V(v)$ be the column vector of all monomials $v^\alpha$ with
$1\le|\alpha|\le d-1$.  The mixed part can be written
\[
 V(v)^*A(z)V(v),
\]
where $A(z)$ is Hermitian and depends continuously on $z$; entries
which would produce total degree greater than $d$ are set equal to
zero.  Choose
\[
 L>\sup_{z\in K}\|A(z)\|.
\]
Then $A(z)+LI$ is positive definite for every $z\in K$.  Set
\[
 F_z(v)=(A(z)+LI)^{1/2}V(v),
 \qquad
 G_z(v)=\sqrt L\,V(v).
\]
Then
\[
 \|F_z(v)\|^2-\|G_z(v)\|^2=V(v)^*A(z)V(v),
\]
which proves \eqref{eq:hermitian-decomposition}.  The positive
square-root map is continuous, and smooth on the cone of positive
definite Hermitian matrices.  This gives the asserted continuous
dependence, and also the asserted smooth dependence when a smooth
parameter structure is present.
The individual squared norms may contain terms of degree greater than
$d$, but those terms arise from the added $LI$ and cancel exactly in
their difference.
\end{proof}

We also state the precise consequence of finite D'Angelo type for the
unitary family of ideals associated with such a decomposition.

\begin{lemma}\label{lem:unitary-colength}
Let $r$ be a smooth real-valued germ at $0\in\C^N$, with $r(0)=0$, and
suppose that for every nonconstant holomorphic curve germ $\gamma$
through $0$,
\begin{equation}\label{eq:type-bound-germ}
 \operatorname{ord}_0(r\circ\gamma)
 \le T\operatorname{ord}_0\gamma
\end{equation}
for some finite $T$.  Let $R$ be the Taylor polynomial of $r$ of degree
$d>T$, and suppose that
\[
 R=2\operatorname{Re} h+\|F\|^2-\|G\|^2
\]
with $h,F,G$ holomorphic polynomials vanishing at $0$, the last two
with values in the same finite-dimensional Hermitian space.  For every
unitary $U$ on that space, put
\[
 I_U=(h,F-UG)\subset\mathcal O_0.
\]
Then
\begin{equation}\label{eq:unitary-type}
 \Delta(I_U)\le T,
\end{equation}
and consequently
\begin{equation}\label{eq:unitary-colength}
 \dim_\C\mathcal O_0/I_U\le T^N.
\end{equation}
\end{lemma}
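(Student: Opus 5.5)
To prove \eqref{eq:unitary-type} we fix a nonconstant curve germ $\gamma$ with $\operatorname{ord}_0\gamma=m$. We show that some element of $I_U$ vanishes along $\gamma$ to order at most $Tm$. The colength bound \eqref{eq:unitary-colength} then follows from D'Angelo's inequality \eqref{eq:ideal-multiplicity}. Note that $\Delta(I_U)<\infty$ is part of what we prove, and $I_U$ is proper because all generators vanish at $0$.

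The key identity is the following. Since $U$ is unitary,
\[
\|F\|^2-\|G\|^2=\|F\|^2-\|UG\|^2=2\operatorname{Re}\langle F-UG,\,UG\rangle+\|F-UG\|^2 .
\]
Hence, writing $\Phi=F-UG$, we get
\[
R=2\operatorname{Re}h+2\operatorname{Re}\langle \Phi,UG\rangle+\|\Phi\|^2 .
\]
We argue by contradiction. Suppose every generator of $I_U$ satisfies $\operatorname{ord}_0(h\circ\gamma)>Tm$ and $\operatorname{ord}_0(\Phi_j\circ\gamma)>Tm$ for all components $\Phi_j$ of $\Phi$ in an orthonormal basis. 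Then every element of $I_U$ has order $>Tm$ along $\gamma$, so $\min_{I_U}\operatorname{ord}(\cdot\circ\gamma)>Tm$ would follow. Taking the contrapositive over all $\gamma$ gives $\Delta(I_U)\le T$.

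Under this assumption, each term of the identity is controlled along $\gamma$:
\begin{itemize}
\item $h\circ\gamma$ vanishes to order $>Tm$.
\item $\langle\Phi,UG\rangle\circ\gamma$ vanishes to order $>Tm$, since $G\circ\gamma$ vanishes at $0$ and the order only increases.
\item $\|\Phi\circ\gamma\|^2$ vanishes to order $>2Tm\ge Tm$.
\end{itemize}
Therefore $\operatorname{ord}_0(R\circ\gamma)>Tm$. Orders of real functions of $\zeta$ are understood as the lowest total degree in $\zeta,\bar\zeta$ of a nonzero Taylor term.

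Next we pass from $R$ to $r$. Since $r-R$ vanishes at $0$ to order $\ge d+1$, the composition $(r-R)\circ\gamma$ vanishes to order $\ge (d+1)m>Tm$. Hence $\operatorname{ord}_0(r\circ\gamma)>Tm$, contradicting \eqref{eq:type-bound-germ}. This uses the hypothesis $d>T$. If $T$ is not an integer, the strict inequality $(d+1)m>Tm$ still holds.

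The only delicate point, and the main obstacle, is the bookkeeping of orders for the real-valued compositions. We use that $\operatorname{ord}(\operatorname{Re}\phi)\ge\operatorname{ord}\phi$ for holomorphic $\phi$, that orders are superadditive under products, and that the order of a sum is at least the minimum of the orders. It is also important that $T$ bounds the order of $r\circ\gamma$ itself, not only of its Taylor polynomial; this is exactly what the truncation degree $d>T$ handles. Once \eqref{eq:unitary-type} is established, \eqref{eq:ideal-multiplicity} gives $\dim_\C\mathcal O_0/I_U\le\Delta(I_U)^N\le T^N$.
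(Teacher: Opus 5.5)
Your proof is correct and follows the paper's argument: you use the same unitary identity $\|F\|^2-\|G\|^2=2\operatorname{Re}\langle F-UG,UG\rangle+\|F-UG\|^2$ (equivalent to the paper's $(F-UG)\cdot\overline F+(UG)\cdot\overline{(F-UG)}$), the same order bookkeeping along $\gamma$, the same Taylor-remainder step using $d>T$, and then D'Angelo's inequality. The only difference is presentational: you argue by contradiction, whereas the paper shows directly that $s_\gamma\le\operatorname{ord}_0(R\circ\gamma)=\operatorname{ord}_0(r\circ\gamma)\le Tq$.
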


\begin{proof}
Fix a nonconstant curve germ $\gamma$ and put
$q=\operatorname{ord}_0\gamma$.  The Taylor remainder $r-R$ has order
at least $d+1$ in the ambient variables, hence
\[
 \operatorname{ord}_0((r-R)\circ\gamma)\ge(d+1)q>Tq.
\]
Together with \eqref{eq:type-bound-germ}, this gives
\begin{equation}\label{eq:R-same-order}
 \operatorname{ord}_0(R\circ\gamma)
 =\operatorname{ord}_0(r\circ\gamma)\le Tq.
\end{equation}
Since $U$ is unitary,
\[
 \|F\|^2-\|G\|^2
 =(F-UG)\cdot\overline F
 +(UG)\cdot\overline{(F-UG)}.
\]
Let
\[
 s_\gamma=
 \min\Bigl\{
 \operatorname{ord}_0(h\circ\gamma),
 \operatorname{ord}_0((F-UG)_\nu\circ\gamma):\nu
 \Bigr\}.
\]
This is exactly
\[
 s_\gamma=\min_{a\in I_U}\operatorname{ord}_0(a\circ\gamma),
\]
because the displayed functions generate $I_U$.  Every summand of
$R\circ\gamma$ has order at least $s_\gamma$; cancellation can only
increase the order.  Hence, by \eqref{eq:R-same-order},
\[
 s_\gamma\le\operatorname{ord}_0(R\circ\gamma)\le Tq.
\]
Taking the supremum over $\gamma$ proves \eqref{eq:unitary-type}.
D'Angelo's inequality \eqref{eq:ideal-multiplicity} then gives
\eqref{eq:unitary-colength}.
\end{proof}

The fourth lemma says that two finite lists with almost identical Gram
matrices differ, after one unitary change of coordinates, by the square
root of the Gram-matrix error.

\begin{lemma}\label{lem:gram-matching}
Let $A_1,\ldots,A_m$ and $B_1,\ldots,B_m$ lie in a finite-dimensional
Hermitian space $H$ with $\dim H\ge m$.  If
\[
 |\langle A_j,A_k\rangle-\langle B_j,B_k\rangle|\le\eta,
 \qquad 1\le j,k\le m,
\]
then there is a unitary operator $U$ on $H$ such that
\begin{equation}\label{eq:gram-matching}
 \|A_j-UB_j\|\le\sqrt{m\eta},
 \qquad 1\le j\le m.
\end{equation}
\end{lemma}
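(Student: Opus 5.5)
We reduce the statement to the case of orthonormal-type data by working with Gram matrices directly. Let $\Gamma_A=(\langle A_j,A_k\rangle)_{j,k}$ and $\Gamma_B=(\langle B_j,B_k\rangle)_{j,k}$. These are positive semidefinite $m\times m$ matrices with $\|\Gamma_A-\Gamma_B\|_{\max}\le\eta$, and hence $\|\Gamma_A-\Gamma_B\|_{\mathrm{op}}\le\|\Gamma_A-\Gamma_B\|_{HS}\le m\eta$. Define linear maps $S_A,S_B:\C^m\to H$ by $S_Ae_j=A_j$ and $S_Be_j=B_j$. Then $S_A^*S_A=\Gamma_A^{T}$, or $\Gamma_A$ up to a fixed conjugation convention; we fix the convention so that $S_A^*S_A=\Gamma_A$ and $S_B^*S_B=\Gamma_B$.

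By polar decomposition, $S_A=V_A\Gamma_A^{1/2}$ and $S_B=V_B\Gamma_B^{1/2}$, where $V_A,V_B:\C^m\to H$ are partial isometries. Since $\dim H\ge m$, each of them extends to an isometry $W_A,W_B:\C^m\to H$ with the same action on the ranges of $\Gamma_A^{1/2}$ and $\Gamma_B^{1/2}$, so that $S_A=W_A\Gamma_A^{1/2}$ and $S_B=W_B\Gamma_B^{1/2}$. Next choose a unitary $U$ on $H$ with $UW_B=W_A$. This is possible because $W_A$ and $W_B$ are isometric embeddings of $\C^m$, so $U$ can be defined on the image of $W_B$ and extended unitarily between the orthogonal complements, which have equal dimension. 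Then
\[
A_j-UB_j=W_A\bigl(\Gamma_A^{1/2}-\Gamma_B^{1/2}\bigr)e_j,
\]
and consequently $\|A_j-UB_j\|\le\|\Gamma_A^{1/2}-\Gamma_B^{1/2}\|_{\mathrm{op}}$.

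The key step, and the only non-formal one, is the operator square-root estimate $\|P^{1/2}-Q^{1/2}\|_{\mathrm{op}}\le\|P-Q\|_{\mathrm{op}}^{1/2}$ for positive semidefinite $P,Q$. This is the operator monotonicity, or Hölder continuity of exponent $1/2$, of the square root. To prove it, set $X=P^{1/2}-Q^{1/2}$, which is Hermitian, and let $\lambda$ be an eigenvalue of $X$ of maximal modulus with unit eigenvector $x$; by symmetry we may assume $\lambda\ge0$. Then
\[
\langle(P-Q)x,x\rangle=\langle(P^{1/2}X+XQ^{1/2})x,x\rangle=\lambda\bigl(\langle P^{1/2}x,x\rangle+\langle Q^{1/2}x,x\rangle\bigr).
\]
The right-hand side is at least $\lambda\langle(P^{1/2}-Q^{1/2})x,x\rangle+2\lambda\langle Q^{1/2}x,x\rangle\ge\lambda^2$. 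Hence $\lambda^2\le\|P-Q\|_{\mathrm{op}}$.

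Combining the three steps gives
\[
\|A_j-UB_j\|\le(m\eta)^{1/2},
\]
which is \eqref{eq:gram-matching}. The main point requiring care is the conjugation convention in the Gram matrix, which must be chosen so that $S^*S$ equals the given Gram matrix; it affects nothing else in the argument. A second point is the extension of the partial isometries when the vectors are linearly dependent, and this is exactly where the hypothesis $\dim H\ge m$ is used.
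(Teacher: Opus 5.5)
Your proposal is correct and follows essentially the same route as the paper: bound the Gram-operator difference by $m\eta$, prove $\|P^{1/2}-Q^{1/2}\|\le\|P-Q\|^{1/2}$ using an extremal eigenvector of $P^{1/2}-Q^{1/2}$ and the identity $P-Q=P^{1/2}X+XQ^{1/2}$, then extend the polar-decomposition partial isometries to isometries (using $\dim H\ge m$) and intertwine them by a unitary $U$ with $UW_B=W_A$. One remark on terminology only: the estimate you prove is the $1/2$-H\"older continuity of the square root, not its operator monotonicity, which is a different property that your argument does not use.
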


\begin{proof}
Let $\mathbf A,\mathbf B:\C^m\to H$ be the operators with columns
$A_j,B_j$.  The entrywise hypothesis gives
\[
 \|\mathbf A^*\mathbf A-\mathbf B^*\mathbf B\|\le m\eta.
\]
For positive semidefinite operators $X,Y$,
\begin{equation}\label{eq:square-root-holder}
 \|X^{1/2}-Y^{1/2}\|\le\|X-Y\|^{1/2}.
\end{equation}
Indeed, put $Z=X^{1/2}-Y^{1/2}$ and choose a unit vector $u$ with
$Zu=\lambda u$, where, after interchanging $X$ and $Y$ if necessary,
$\lambda=\|Z\|\ge0$.  Since
\[
 X-Y=X^{1/2}Z+ZY^{1/2},
\]
we have
\[
 \langle(X-Y)u,u\rangle
 =\lambda\langle(X^{1/2}+Y^{1/2})u,u\rangle
 \ge\lambda^2,
\]
which proves \eqref{eq:square-root-holder}.

Take polar decompositions
\[
 \mathbf A=W_A|\mathbf A|,
 \qquad
 \mathbf B=W_B|\mathbf B|.
\]
Because $\dim H\ge m$, the partial isometries may be extended to
isometries $W_A,W_B:\C^m\to H$.  Choose a unitary $U$ on $H$ with
$UW_B=W_A$.  Then
\[
 \mathbf A-U\mathbf B
 =W_A(|\mathbf A|-|\mathbf B|),
\]
and \eqref{eq:square-root-holder} gives
\[
 \|\mathbf A-U\mathbf B\|
 \le\|\mathbf A^*\mathbf A-\mathbf B^*\mathbf B\|^{1/2}
 \le\sqrt{m\eta}.
\]
Applying this operator to the standard basis vectors proves
\eqref{eq:gram-matching}.
\end{proof}

Finally we prove separately the quantitative nondegeneracy of composition with a
nonconstant disc at the level of finite jets.

\begin{lemma}\label{lem:jet-composition}
Fix $m,N\ge1$ and $M>0$.  Let
$\phi:(\C,0)\to(\C^N,0)$ be a holomorphic germ whose Taylor
coefficients through order $m$ have modulus at most $M$, and put
$s=\|\phi'(0)\|>0$.  Let $\mathcal P_m$ be the space of holomorphic
polynomials on $\C^N$ of degree at most $m$, and define
\[
 J_\phi:\mathcal P_m\longrightarrow\C^{m+1},
 \qquad
 J_\phi(P)=\bigl([\zeta^k](P\circ\phi)\bigr)_{k=0}^m.
\]
Put
\[
 D=\frac{m(m+1)}2.
\]
Then $J_\phi$ is surjective.  If $A_\phi$ is an
$(m+1)\times(m+1)$ minor of maximal determinant modulus, then
\begin{equation}\label{eq:jet-minor}
 |\det A_\phi|\ge c s^D,
 \qquad
 \|A_\phi^{-1}\|\le C s^{-D},
\end{equation}
where $c,C>0$ depend only on $m,N,M$.  Moreover, for
\[
 B_\phi=A_\phi^{-1}J_\phi,
\]
every matrix entry of $B_\phi$ has modulus at most $1$, and the columns
corresponding to $A_\phi$ form the identity matrix.
\end{lemma}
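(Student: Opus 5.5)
The plan is to produce explicitly an $(m+1)$-dimensional subspace of $\mathcal P_m$ on which $J_\phi$ is triangular with diagonal $1,s,\dots,s^m$. Cauchy--Binet then transfers this to a lower bound for some minor taken in the monomial basis, and Cramer's rule handles the remaining assertions. Throughout, $J_\phi$ is written as an $(m+1)\times\dim\mathcal P_m$ matrix with respect to the monomial basis $\{v^\alpha:|\alpha|\le m\}$ of $\mathcal P_m$. Since the Taylor coefficients of $\phi$ through order $m$ are bounded by $M$, every entry $[\zeta^k](v^\alpha\circ\phi)$ with $k\le m$ is a universal polynomial in those coefficients. Hence all entries of $J_\phi$ are bounded by a constant $C_1=C_1(m,N,M)$. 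In particular $s\le C_2(N,M)$.

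\emph{Step 1 (triangular subspace).} Put $u=\phi'(0)/s$, a unit vector, and let $\ell(v)=\sum_j\overline{u_j}v_j$. Then $\ell\circ\phi=s\zeta+O(\zeta^2)$, so $(\ell\circ\phi)^k=s^k\zeta^k+O(\zeta^{k+1})$. The vectors $J_\phi(\ell^k)$, $k=0,\dots,m$, therefore form an upper triangular $(m+1)\times(m+1)$ matrix $T$ with diagonal entries $s^k$, and $\det T=s^{0+1+\cdots+m}=s^D$. This already gives surjectivity of $J_\phi$. Write $\ell^k=\sum_\alpha e_{\alpha k}v^\alpha$, where the coefficients are multinomial expressions in the $\overline{u_j}$. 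Since $|u_j|\le1$, we have $|e_{\alpha k}|\le C_3(m,N)$. Thus $T=J_\phi E$, with $E$ the $\dim\mathcal P_m\times(m+1)$ coefficient matrix.

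\emph{Step 2 (lower bound for the maximal minor).} By Cauchy--Binet,
\[
 s^D=\det(J_\phi E)=\sum_{S}\det\bigl(J_\phi\bigr)_{S}\,\det E_{S},
\]
where $S$ runs over $(m+1)$-element sets of monomials and the subscript denotes the corresponding columns or rows. The number of terms is bounded in terms of $m,N$, and each $|\det E_S|\le C_4(m,N)$. Hence the maximal minor satisfies $|\det A_\phi|\ge c\,s^D$. This is the only step where something must be checked, namely that the bound does not depend on the direction of $\phi'(0)$. It is the main point, and normalizing $u$ to unit length is what makes the constants uniform.

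\emph{Step 3 (inverse and $B_\phi$).} The adjugate of $A_\phi$ has entries bounded by $C_5(m,N,M)$, because the entries of $A_\phi$ are bounded by $C_1$. Therefore $\|A_\phi^{-1}\|\le C_5'/|\det A_\phi|\le C s^{-D}$. For the matrix $B_\phi=A_\phi^{-1}J_\phi$, Cramer's rule gives the following: the $i$-th entry of the column of $B_\phi$ indexed by a monomial $\beta$ equals $\det A^{(i,\beta)}/\det A_\phi$. Here $A^{(i,\beta)}$ denotes $A_\phi$ with its $i$-th column replaced by the column of $J_\phi$ indexed by $\beta$. This replaced matrix is again an $(m+1)\times(m+1)$ minor of $J_\phi$, or it has a repeated column and determinant zero. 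By maximality of $|\det A_\phi|$, every entry of $B_\phi$ has modulus at most $1$. Finally, the columns of $B_\phi$ indexed by the columns of $A_\phi$ form $A_\phi^{-1}A_\phi=I$.
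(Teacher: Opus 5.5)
Your proof is correct and follows the paper's argument: powers of a linear form give a triangular block with determinant of order $s^D$, and Cramer's rule then yields the bound on $\|A_\phi^{-1}\|$ and the entry bound for $B_\phi$. The only difference is that the paper takes the linear form to be a coordinate $v_\ell$ with $|\phi_\ell'(0)|\ge s/\sqrt N$; since $1,v_\ell,\dots,v_\ell^m$ are already monomials, the triangular block is itself a minor with determinant modulus at least $(s/\sqrt N)^D$, so your (valid) Cauchy--Binet transfer is not needed.
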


\begin{proof}
Choose a coordinate $\ell$ such that
\[
 |\phi_\ell'(0)|\ge\frac{s}{\sqrt N}.
\]
On the subspace spanned by
\[
 1,v_\ell,v_\ell^2,\ldots,v_\ell^m,
\]
the matrix of $J_\phi$ is triangular: if
$\phi_\ell(\zeta)=a\zeta+O(\zeta^2)$, then
\[
 \phi_\ell(\zeta)^k=a^k\zeta^k+O(\zeta^{k+1}).
\]
Thus the corresponding determinant has modulus
\[
 |a|^{1+2+\cdots+m}
 \ge c s^{m(m+1)/2}=c s^D.
\]
This proves surjectivity and the first inequality in
\eqref{eq:jet-minor} for a maximal minor $A_\phi$.

The entries of the matrix of $J_\phi$ are bounded in terms of
$m,N,M$, so the cofactors of $A_\phi$ are uniformly bounded.  Cramer's
rule gives the second estimate in \eqref{eq:jet-minor}.  It also shows
that every entry of $B_\phi=A_\phi^{-1}J_\phi$ has modulus at most $1$:
replacing one selected column of $A_\phi$ by any other column of
$J_\phi$ produces the numerator determinant, whose modulus is at most
$|\det A_\phi|$ by maximality.  The selected columns themselves form
the identity matrix.
\end{proof}

\begin{proof}[Proof of Proposition~\ref{prop:finite-type-power}]
We may assume $p=0$ and put
\[
 T=\Delta_1(p)<\infty.
\]
Choose once and for all an integer
\begin{equation}\label{eq:choose-m}
 m>T^N.
\end{equation}
Since $T\ge1$, this also gives $2m>T$.  All constants below may depend
on this fixed $m$.

Let $f:\D\to\Omega$ be holomorphic, with center $z=f(0)$ sufficiently
near $p$, and write
\[
 \phi=f-z,
 \qquad
 g=-r\circ f>0,
 \qquad
 \delta=g(0)=-r(z).
\]
We shall prove
\begin{equation}\label{eq:disc-power-bound}
 \|f'(0)\|\le C\delta^\varepsilon
\end{equation}
for a fixed $\varepsilon>0$.

First we convert the mean estimate into finite-jet control.  Boundedness
of $\Omega$, Cauchy's inequalities, and smoothness of $r$ give a
uniform $C^{4m+1}$ bound for $g$ on the fixed disc $\D_a$, after
decreasing $a$ if necessary.  Lemma~\ref{lem:disc-mean} gives
\[
 \int_{\D_\rho}g\,dA\le C\delta\rho^2,
 \qquad 0<\rho\le a.
\]
Apply Lemma~\ref{lem:finite-jet-control} with $q=4m$ and $\ell=2m$.
Since
\[
 1-\frac{2m}{4m+1}=\frac{2m+1}{4m+1}>\frac12,
\]
we obtain
\begin{equation}\label{eq:jet-coeff}
 \bigl|[\zeta^j\bar\zeta^k]g\bigr|
 \le C\delta^{1/2},
 \qquad j+k\le2m.
\end{equation}

For $z$ in the closure of a sufficiently small fixed neighborhood of
$p$, let $R_z(v)$ be the Taylor polynomial of
\[
 r(z+v)-r(z)
\]
of total degree $2m$.  Lemma~\ref{lem:hermitian-decomposition} gives a
fixed Hermitian space $H$ and holomorphic polynomials, depending
continuously on $z$,
\begin{equation}\label{eq:jet-hermitian}
 R_z(v)=2\operatorname{Re} h_z(v)+\|F_z(v)\|^2-\|G_z(v)\|^2,
\end{equation}
all vanishing at $v=0$.  We use the construction in that lemma, so $H$
contains the monomials of degrees $1$ through $2m-1$; in particular
$\dim H\ge m$.

At the boundary point $p$, finite type gives
\[
 \operatorname{ord}_0(r\circ\gamma)
 \le T\operatorname{ord}_0\gamma
\]
for every nonconstant holomorphic curve germ through $p$.  Since
$2m>T$, Lemma~\ref{lem:unitary-colength}, applied to $R_p$, shows that
for every unitary $U$ on $H$ the ideal
\[
 I_U=(h_p,F_p-UG_p)\subset\mathcal O_0
\]
satisfies
\begin{equation}\label{eq:jet-length}
 \dim_\C\mathcal O_0/I_U\le T^N<m.
\end{equation}
This is the only use of finite type in the proof.

We now return to the disc.  Expand
\[
 F_z(\phi(\zeta))=\sum_{j\ge1}a_j\zeta^j,
 \qquad
 G_z(\phi(\zeta))=\sum_{j\ge1}b_j\zeta^j.
\]
Because $R_z$ is the $2m$-jet of $r(z+v)-r(z)$ and $\phi(0)=0$, the
coefficients through total degree $2m$ of $R_z\circ\phi$ agree with
those of $r\circ f-r(z)$.  Hence \eqref{eq:jet-coeff} gives
\begin{equation}\label{eq:jet-grams}
 |\langle a_j,a_k\rangle-\langle b_j,b_k\rangle|
 \le C\delta^{1/2},
 \qquad 1\le j,k\le m.
\end{equation}
The pure holomorphic coefficient of degree $j$ in
\eqref{eq:jet-hermitian} comes only from $h_z\circ\phi$, so also
\begin{equation}\label{eq:h-small}
 |[\zeta^j](h_z\circ\phi)|\le C\delta^{1/2},
 \qquad 1\le j\le m.
\end{equation}

Apply Lemma~\ref{lem:gram-matching} to the two lists
$a_1,\ldots,a_m$ and $b_1,\ldots,b_m$.  There is a unitary $U_f$ on
$H$ such that the Taylor coefficients through degree $m$ of
\begin{equation}\label{eq:jet-small-generators}
 h_z\circ\phi,
 \qquad
 (F_z-U_fG_z)\circ\phi
\end{equation}
are $O(\delta^{1/4})$.  Here the coefficients of $h_z\circ\phi$ are in
fact $O(\delta^{1/2})$, which is stronger.  Let $q_z$ denote $h_z$ or a
component of $F_z-U_fG_z$.  Since the coefficients of $\phi$ through
every fixed order are uniformly bounded, the identity
\[
 (v^\beta q_z)\circ\phi=\phi^\beta(q_z\circ\phi)
\]
shows that the Taylor coefficients through order $m$ of
$(v^\beta q_z)\circ\phi$ are also $O(\delta^{1/4})$ for every
$|\beta|\le m$.

Let $\mathcal P_m$ be the holomorphic polynomials in $v\in\C^N$ of
degree at most $m$, and define
\[
 J_f:\mathcal P_m\longrightarrow\C^{m+1},
 \qquad
 J_f(P)=\bigl([\zeta^k](P\circ\phi)\bigr)_{k=0}^m.
\]
Put
\[
 s=\|f'(0)\|,
 \qquad
 D=\frac{m(m+1)}2.
\]
If $s=0$, \eqref{eq:disc-power-bound} is trivial, so assume $s>0$.
The Taylor coefficients of $\phi$ through order $m$ are uniformly
bounded, and Lemma~\ref{lem:jet-composition} gives a maximal minor
$A_f$ and a normalized map
\[
 B_f=A_f^{-1}J_f
\]
such that
\begin{equation}\label{eq:jet-inverse}
 \|A_f^{-1}\|\le Cs^{-D},
\end{equation}
every entry of $B_f$ has modulus at most $1$, and the selected columns
of $B_f$ form the identity matrix.

Set
\begin{equation}\label{eq:epsilon-choice}
 \varepsilon=\frac1{2m(m+1)},
\end{equation}
so that $\varepsilon D=1/4$.  Suppose, toward a contradiction, that
\eqref{eq:disc-power-bound} does not hold uniformly for discs centered
near $p$.  Then there are discs $f_j$ with centers $z_j\to p$ such that,
with
\[
 \delta_j=-r(z_j),
 \qquad
 s_j=\|f_j'(0)\|>0,
\]
we have
\begin{equation}\label{eq:jet-counter}
 \frac{s_j}{\delta_j^\varepsilon}\longrightarrow\infty.
\end{equation}
For each $j$, let $U_j=U_{f_j}$.  After passing to a subsequence,
compactness of the unitary group gives
\[
 U_j\longrightarrow U.
\]
There are only finitely many choices of the selected columns defining
the maximal minor $A_{f_j}$, so we may also assume that the same columns
are selected for every $j$.  Since all entries of $B_{f_j}$ have
modulus at most $1$, after a further subsequence
\[
 B_{f_j}\longrightarrow B.
\]
The selected columns of $B$ still form the identity; hence
\begin{equation}\label{eq:B-rank}
 \operatorname{rank}B=m+1.
\end{equation}

Let $q_j$ be one of the generators
\[
 h_{z_j}
 \quad\text{or a component of}\quad
 F_{z_j}-U_jG_{z_j}.
\]
Put $\phi_j=f_j-z_j$.  For $|\beta|\le m$, let $[v^\beta q_j]_{\le m}$ denote its Taylor
truncation in the $v$ variables at degree $m$.  Since
$\phi_j(0)=0$, terms of $v$-degree at least $m+1$ contribute only terms
of $\zeta$-order at least $m+1$ after composition with $\phi_j$.
Therefore truncation does not change the vector represented by
$J_{f_j}$.  The estimate following \eqref{eq:jet-small-generators}
gives
\[
 \|J_{f_j}([v^\beta q_j]_{\le m})\|
 \le C\delta_j^{1/4}.
\]
Using \eqref{eq:jet-inverse},
\[
 \|B_{f_j}([v^\beta q_j]_{\le m})\|
 \le Cs_j^{-D}\delta_j^{1/4}
 =C\left(\frac{\delta_j^\varepsilon}{s_j}\right)^D
 \longrightarrow0
\]
by \eqref{eq:jet-counter}.

The coefficients of $h_{z_j}$ converge to those of $h_p$, while
$F_{z_j}-U_jG_{z_j}$ converges coefficientwise to $F_p-UG_p$.
Thus $q_j$ converges to the corresponding generator $q$ of $I_U$, and
$B_{f_j}\to B$ gives
\begin{equation}\label{eq:B-kills-products}
 B([v^\beta q]_{\le m})=0
\end{equation}
for every generator $q$ of $I_U$ and every $|\beta|\le m$.

Let $\mathfrak m$ be the maximal ideal of $\mathcal O_0$ and identify
\[
 \mathcal P_m\simeq\mathcal O_0/\mathfrak m^{m+1}.
\]
The classes of the products $v^\beta q$ appearing in
\eqref{eq:B-kills-products} span the image of $I_U$ in this quotient.
Indeed, if
\[
 h=\sum_\nu a_\nu q_\nu\in I_U,
\]
replace each $a_\nu$ by its Taylor polynomial of degree $m$.  The
discarded part lies in $\mathfrak m^{m+1}$, and multiplication by
$q_\nu\in\mathfrak m$ leaves it in $\mathfrak m^{m+1}$; the retained
part is a linear combination of monomials $v^\beta q_\nu$ with
$|\beta|\le m$.  Hence \eqref{eq:B-kills-products} says that $B$
vanishes on the entire image of $I_U$ in
$\mathcal O_0/\mathfrak m^{m+1}$.

By \eqref{eq:B-rank}, $B$ is onto $\C^{m+1}$, so it induces a
surjective map
\[
 \frac{\mathcal O_0}{I_U+\mathfrak m^{m+1}}
 \longrightarrow\C^{m+1}.
\]
Consequently
\[
 m+1
 \le\dim_\C\frac{\mathcal O_0}{I_U+\mathfrak m^{m+1}}
 \le\dim_\C\frac{\mathcal O_0}{I_U}
 \le T^N<m,
\]
where the last inequality is \eqref{eq:jet-length}.  This contradiction
proves \eqref{eq:disc-power-bound} with the exponent
\eqref{eq:epsilon-choice}.

Finally, $-r(z)\asymp\delta_\Omega(z)$ near the boundary.  If an
admissible disc in the definition of the Kobayashi--Royden metric
satisfies $f(0)=z$ and $f'(0)=a^{-1}X$, then
\[
 a^{-1}\|X\|\le C\delta_\Omega(z)^\varepsilon,
\]
and therefore
\[
 a\ge c\frac{\|X\|}{\delta_\Omega(z)^\varepsilon}.
\]
Taking the infimum over all admissible discs proves
\eqref{eq:weight-power}.
\end{proof}

\begin{proof}[Proof of Theorem~\ref{thm:finite-type}]
For every $p\in\partial\Omega$, Proposition~\ref{prop:finite-type-power} gives a neighborhood $V_p$ and constants $c_p,\varepsilon_p>0$ such that
\[
 \kappa_\Omega(z;X)\ge c_p\frac{\|X\|}{\delta_\Omega(z)^{\varepsilon_p}},
 \qquad z\in V_p\cap\Omega.
\]
Choose a finite subcover of the compact boundary.  Restricting to $\delta_\Omega(z)<1$, let $\varepsilon$ be the minimum of the finitely many exponents and $c$ the minimum of the corresponding constants.  Since $0<\delta_\Omega(z)<1$ and $\varepsilon\le\varepsilon_p$, each local estimate implies
\[
 \kappa_\Omega(z;X)\ge c\frac{\|X\|}{\delta_\Omega(z)^\varepsilon}.
\]
This gives the desired global power estimate near the boundary.  On the remaining compact subset of the bounded domain, the Kobayashi--Royden metric dominates a positive multiple of the Euclidean norm, so decreasing $c$ extends the estimate to all of $\Omega$.

Since the boundary is $C^\infty$, choose an integer $q\ge3$ with $q\varepsilon\ge1$.  Proposition~\ref{prop:self-general} then yields the normal estimate
\[
 \kappa_\Omega(z;X)
 \ge
 c\,
 \frac{|\partial r(z)X|}
 {(-r(z))\log(A/(-r(z)))}
\]
near $\partial\Omega$.  Proposition~\ref{prop:criterion} proves Kobayashi completeness.
\end{proof}

\section{Subharmonic functions and polynomial discs}
Let $\omega$ be an even, nonnegative smooth function supported in
$(-1,1)$ with $\int_\R\omega(t)\,dt=1$.  Put
\[
 \chi(x)=\int_\R\max\{x-t,0\}\omega(t)\,dt.
\]
Then $\chi\in C^\infty(\R)$ is convex and nondecreasing,
$0\le\chi'\le1$, $\chi(x)=0$ for $x\le-1$, and
$\chi(x)=x$ for $x\ge1$.  Since $\omega$ is even,
$0\le\chi(x)-\max\{x,0\}\le\chi(0)<1/2$.
The only features of $\chi$ that will be used are its convexity,
monotonicity, and the fact that it agrees exactly with the two affine
pieces $0$ and $x$ away from the transition interval $[-1,1]$.

\subsection{A two-pole subharmonic function}
For $a\ge1$ put
\[
\tau_a=\frac1{2a},\qquad B_a=\frac9{16a},\qquad
L_a=\log\frac1{B_a(B_a-\tau_a)}=\log\frac{256a^2}{9}.
\]
The two distinguished parameter values are $0$ and $\tau_a$.  The
quantity $B_a$ is chosen slightly to the right of $\tau_a$:
\[
 B_a-\tau_a=\frac1{16a}.
\]
This makes the vertical line $\{\Ree\zeta=B_a\}$ a convenient place at
which the two definitions below will agree exactly.

Away from $0$ and $\tau_a$, define
\[
g_a(\zeta)=7-8a\Ree\zeta+
 \frac{\log|\zeta(\zeta-\tau_a)|}{L_a},
\]
and let
\begin{equation}\label{eq:hAdef}
h_a(\zeta)=\frac18
\begin{cases}
\chi(g_a(\zeta)),&\Ree\zeta\le B_a,\\
g_a(\zeta),&\Ree\zeta\ge B_a,
\end{cases}
\end{equation}
with value $0$ at both logarithmic poles.

The affine term $7-8a\Ree\zeta$ is the part which will later produce
$1-a\Ree\zeta$ after division by $8$.  The logarithmic term has two
poles, at precisely the two parameter values which are mapped to the
node.  Composing with $\chi$ removes these poles smoothly because
$\chi$ is identically zero when its argument is at most $-1$.

\begin{lemma}\label{lem:scalar}
For all sufficiently large $a$, the function $h_a$ is smooth and
subharmonic on $\C$. With $r_a=(2a)^{-20}$,
\begin{equation}\label{eq:scalarzero}
h_a=0\quad\text{on }\{|\zeta|\le r_a\}\cup
\{|\zeta-\tau_a|\le r_a\},
\end{equation}
and
\begin{equation}\label{eq:scalarineq}
h_a(\zeta)\le1-a\Ree\zeta-\frac1{16}
\qquad(|\zeta|\le1).
\end{equation}
There are constants $C,C_m$ independent of $a$ such that
\begin{equation}\label{eq:scalarbounds}
|h_a(\zeta)|\le Ca(1+|\zeta|),\qquad
\sup_\C|D^mh_a|\le C_ma^{20m+1}\quad(m\ge1).
\end{equation}
\end{lemma}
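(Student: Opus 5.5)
The plan is to exploit that, away from its two poles, $g_a$ is harmonic: it is an affine function plus $\log|\zeta|/L_a+\log|\zeta-\tau_a|/L_a$. Since $\chi$ is convex and nondecreasing, $\chi\circ g_a$ is then subharmonic wherever $g_a$ is smooth. All assertions of the lemma should then follow from elementary evaluations of $g_a$ in three regions: near the poles, near the gluing line $\{\Ree\zeta=B_a\}$, and on the rest of the unit disc. Throughout I use that $L_a=\log(256a^2/9)\sim2\log a$.

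\emph{Step 1: gluing.} On $\Ree\zeta=B_a$ the affine part equals $7-8aB_a=5/2$. Moreover $|\zeta|\ge B_a$ and $|\zeta-\tau_a|\ge B_a-\tau_a$, so $\log|\zeta(\zeta-\tau_a)|\ge-L_a$ and hence $g_a\ge3/2$. By continuity $g_a>1$ on an open strip around the line. There $\chi(g_a)=g_a$, so the two branches of \eqref{eq:hAdef} coincide on a neighborhood of the line, and $h_a$ is smooth and subharmonic across it. On $\Ree\zeta\ge B_a$ there are no poles (both lie in $\Ree\zeta\le\tau_a<B_a$), so $h_a=g_a/8$ is harmonic there.

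\emph{Step 2: the poles and \eqref{eq:scalarzero}.} Take $|\zeta|\le r_a$. Then $|\zeta-\tau_a|\le1$, so the logarithmic term is at most $-20\log(2a)/L_a\approx-10$, while the affine part is at most $7+8ar_a$. Hence $g_a\le-1$ for large $a$. Near $\tau_a$ we have $8a\tau_a=4$, so the affine part is about $3$ and the same computation applies. Therefore $\chi(g_a)=0$ on both $r_a$-discs. This proves smoothness at the poles (value $0$) and \eqref{eq:scalarzero}. Together with Step 1 it also proves smoothness and subharmonicity on all of $\C$.

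\emph{Step 3: the inequality \eqref{eq:scalarineq}.} Fix $|\zeta|\le1$; then $|\zeta(\zeta-\tau_a)|\le2$.
\begin{itemize}
\item If $\Ree\zeta\ge B_a$, the bound $h_a=g_a/8\le 15/16-a\Ree\zeta$ reduces to $\log|\zeta(\zeta-\tau_a)|\le L_a/2$. This holds for large $a$.
\item If $\Ree\zeta\le B_a$ and $g_a\ge1$, then $h_a=g_a/8$ and the same estimate applies.
\item If $\Ree\zeta\le B_a$ and $g_a\le0$, then $h_a\le\chi(0)/8<1/16$. The right side is at least $15/16-9/16$ because $a\Ree\zeta\le 9/16$, so the bound holds.
\item If $\Ree\zeta\le B_a$ and $0<g_a<1$, use $\chi(x)\le x+\chi(0)<x+1/2$. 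This gives $h_a\le 15/16-a\Ree\zeta+\log|\zeta(\zeta-\tau_a)|/(8L_a)$. It suffices that the logarithm is nonpositive. If it were positive, then $g_a<1$ would force $8a\Ree\zeta>6$, i.e.\ $\Ree\zeta>3/(4a)>B_a$, a contradiction.
\end{itemize}

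\emph{Step 4: the bounds \eqref{eq:scalarbounds}.} The growth bound is immediate: $0\le\chi(x)\le\max(x,0)+1/2$, the affine part is $O(a(1+|\zeta|))$, and $\log|\zeta(\zeta-\tau_a)|\lesssim\log(2+|\zeta|)$ where $\chi'(g_a)\neq0$.

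For the derivatives, note that $\chi^{(k)}(g_a)$, $k\ge1$, is supported where $g_a>-1$. By Step 2 this set lies where $|\zeta|,|\zeta-\tau_a|\ge r_a$, and there $|D^jg_a|\le 8a\,[j=1]+C_j r_a^{-j}/L_a\le C_j a^{20j}$ (using $8a\le(2a)^{20}$). Faà di Bruno then bounds $D^m(\chi\circ g_a)$ by sums of products $\prod_i|D^{j_i}g_a|$ with $\sum_i j_i=m$, hence by $C_ma^{20m}$, which is $\le C_ma^{20m+1}$. On $\Ree\zeta\ge B_a$ the poles are at distance $\ge 1/(16a)$, which is even easier.

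The main obstacle is the bookkeeping in Step 3, specifically the case $0<g_a<1$. There the crude bound $\chi(x)<x+1/2$ exactly consumes the slack $1/16$, and one must use the geometric fact that $g_a<1$ with $\Ree\zeta\le B_a$ forces $|\zeta(\zeta-\tau_a)|\le1$.
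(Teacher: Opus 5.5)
Your proof is correct and follows essentially the same route as the paper. Both arguments glue the two branches via $g_a\ge 3/2$ on $\{\Ree\zeta=B_a\}$, remove the poles because $g_a<-1$ on the two $r_a$-discs, and bound the derivatives by a chain-rule count.

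The only divergence is Step~3. The paper avoids your four-case split by using monotonicity of $\chi$: for $|\zeta|\le1$ one has $g_a\le \frac{15}{2}-8a\Ree\zeta$, and on $\{\Ree\zeta\le B_a\}$ this upper bound is at least $3>1$. Hence $\chi(g_a)\le\chi\bigl(\frac{15}{2}-8a\Ree\zeta\bigr)=\frac{15}{2}-8a\Ree\zeta$, and the ``main obstacle'' you identify never arises.
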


\begin{proof}
There are four points to check: the two formulas in
\eqref{eq:hAdef} must glue smoothly, the logarithmic poles must be
removed, the affine upper bound must hold, and the derivatives must be
controlled uniformly in $a$.

\smallskip
\noindent\emph{Step 1: the two formulas agree near the joining line.}
On the line $\Ree\zeta=B_a$ we have
\[
 |\zeta(\zeta-\tau_a)|\ge B_a(B_a-\tau_a).
\]
By the definition of $L_a$,
\[
 \frac{\log|\zeta(\zeta-\tau_a)|}{L_a}\ge -1.
\]
Moreover,
\[
 8aB_a=\frac92.
\]
Therefore
\[
 g_a(\zeta)\ge 7-\frac92-1=\frac32>1.
\]
Since $\chi(x)=x$ for $x\ge1$, we have
$\chi(g_a)=g_a$ in a neighborhood of the joining line.  Hence the two
pieces in \eqref{eq:hAdef} agree there to all orders.  Away from the two
poles, $g_a$ is harmonic; because $\chi$ is convex and nondecreasing,
$\chi\circ g_a$ is subharmonic.  Thus only the poles remain to be
checked.

\smallskip
\noindent\emph{Step 2: the logarithmic poles are killed by the cutoff.}
Assume
\[
 \min\{|\zeta|,|\zeta-\tau_a|\}\le(2a)^{-20}.
\]
The other distance is at most $\tau_a+(2a)^{-20}=(2a)^{-1}+(2a)^{-20}$,
so, for large $a$,
\[
 |\zeta(\zeta-\tau_a)|\le 2(2a)^{-21}.
\]
Also $\Ree\zeta\ge-(2a)^{-20}$.  Hence
\[
 g_a(\zeta)
 \le 7+4(2a)^{-19}
 +\frac{\log2-21\log(2a)}{L_a}.
\]
Since
\[
 L_a=2\log a+O(1),
\]
the last expression tends to
\[
 7-\frac{21}{2}=-\frac72<-1.
\]
Thus $g_a<-1$ throughout both sufficiently small discs for large $a$.
They also lie in the half-plane $\Ree\zeta<B_a$.  Consequently
$\chi(g_a)=0$ there and therefore $h_a=0$.  This proves
\eqref{eq:scalarzero}; in particular the apparent logarithmic
singularities are removable and $h_a$ is globally smooth and
subharmonic.

The exponent $20$ has no intrinsic meaning.  What matters is that the
small zero-neighborhoods shrink by a fixed sufficiently large power of
$a$.  We keep the value $20$ because it leaves a large numerical margin
in the later separation estimates.

\smallskip
\noindent\emph{Step 3: the affine upper bound.}
For $|\zeta|\le1$,
\[
 |\zeta(\zeta-\tau_a)|\le 1+\tau_a,
\]
and therefore, for all sufficiently large $a$,
\[
 \frac{\log|\zeta(\zeta-\tau_a)|}{L_a}
 \le\frac{\log(1+\tau_a)}{L_a}\le\frac12.
\]
Hence
\[
 g_a(\zeta)\le \frac{15}{2}-8a\Ree\zeta.
\]
If $\Ree\zeta\le B_a$, then
\[
 \frac{15}{2}-8a\Ree\zeta
 \ge\frac{15}{2}-8aB_a
 =\frac{15}{2}-\frac92=3>1.
\]
By monotonicity of $\chi$ and the identity $\chi(x)=x$ for $x\ge1$,
\[
 \chi(g_a(\zeta))
 \le \chi\!\left(\frac{15}{2}-8a\Ree\zeta\right)
 =\frac{15}{2}-8a\Ree\zeta.
\]
After division by $8$,
\[
 h_a(\zeta)\le \frac{15}{16}-a\Ree\zeta
 =1-a\Ree\zeta-\frac1{16}.
\]
If $\Ree\zeta\ge B_a$, then $h_a=g_a/8$, so the same estimate follows
directly.  This proves \eqref{eq:scalarineq}.

\smallskip
\noindent\emph{Step 4: growth and derivative estimates.}
Outside the zero-neighborhoods in \eqref{eq:scalarzero}, both
$|\zeta|$ and $|\zeta-\tau_a|$ are at least $(2a)^{-20}$.  Thus
\[
 \bigl|\log|\zeta(\zeta-\tau_a)|\bigr|
 \le C\log a+2\log(1+|\zeta|).
\]
Since $L_a\asymp\log a$, the logarithmic contribution to $g_a$ is
$O(1+\log(1+|\zeta|))$, while the affine contribution is
$O(a(1+|\zeta|))$.  This gives the first estimate in
\eqref{eq:scalarbounds}.

For a derivative of positive order $m$, differentiating
$\log|\zeta|$ or $\log|\zeta-\tau_a|$ produces a factor bounded by a
constant times the inverse $m$th power of the corresponding distance.
Hence, outside the zero-neighborhoods,
\[
 |D^m\log|\zeta(\zeta-\tau_a)||\le C_m a^{20m}.
\]
The derivatives of $\chi$ are bounded independently of $a$.  Repeated
application of the chain rule therefore gives
\[
 |D^mh_a|\le C_ma^{20m+1}.
\]
On the two zero-neighborhoods all derivatives vanish.  This proves
\eqref{eq:scalarbounds} and completes the lemma.
\end{proof}

\subsection{Analytic discs}
We now place the scalar function $h_a$ on a family of polynomial discs.
The polynomial curve
\[
 \Gamma(\xi)=\bigl(\xi^2(\xi-1),\xi(\xi-1)\bigr)
 =f(\xi)(\xi,1),\qquad f(\xi)=\xi(\xi-1),
\]
has a node at the origin:
\[
 \Gamma(0)=\Gamma(1)=0.
\]
After rescaling the parameter, these two preimages will be separated by
$n^{-2}$.  The series $\sum n^{-2}$ converges, which is exactly what is
needed later for a finite Kobayashi length.  A second scale $b_n$ shrinks
the tangential image rapidly enough that different discs can be
separated by a cutoff while the eventual infinite series still converges
smoothly.

For all sufficiently large integers $n$, put
\begin{equation}\label{eq:parameters}
\begin{aligned}
 a_n&=\frac{n^2}{2},& b_n&=e^{-n^{3/4}},& \tau_n&=\frac1{2a_n},\\
 \rho_n&=h_{a_n},& f(\xi)&=\xi(\xi-1).
\end{aligned}
\end{equation}
Thus
\[
 2a_n=n^2,\qquad \tau_n=n^{-2}.
\]
Consider the polynomial discs
\begin{equation}\label{eq:gamma}
 \gamma_n(\zeta)=b_n\Gamma(2a_n\zeta)
 =b_nf(2a_n\zeta)(2a_n\zeta,1).
\end{equation}
At $\zeta=0$ and $\zeta=\tau_n$ the argument of $\Gamma$ is respectively
$0$ and $1$, so
\[
 \gamma_n(0)=\gamma_n(\tau_n)=0.
\]
Set $K_n=\gamma_n(\overline\D)$ and $R_n=3b_n n^6$.  Since
$|2a_n\zeta|\le n^2$ on $\overline\D$, the two components of
$\Gamma(2a_n\zeta)$ have size $O(n^6)$, and therefore
\[
 K_n\subset B(0,R_n).
\]
Because $b_n=e^{-n^{3/4}}$ decays faster than any negative power of
$n$, $R_n\to0$ and is eventually decreasing.

The next rational function is designed so that it is constant on each
curve $K_\ell$, with a different constant for each $\ell$.  Put
\begin{equation}\label{eq:pi}
 Q(s,t)=s(s-t),\qquad
 \pi_n(s,t)=\frac{b_nQ(s,t)}{t^3}\quad(t\ne0).
\end{equation}
To see why this is useful, write on the $\ell$th disc
\[
 \xi=2a_\ell\zeta,
 \qquad
 (s,t)=b_\ell f(\xi)(\xi,1).
\]
Then
\[
 s=b_\ell\xi f(\xi),\qquad t=b_\ell f(\xi),
\]
so
\[
 Q(s,t)=s(s-t)
 =b_\ell^2\xi(\xi-1)f(\xi)^2
 =b_\ell^2 f(\xi)^3.
\]
Since $t^3=b_\ell^3f(\xi)^3$, we obtain
\begin{equation}\label{eq:pi-discs}
 \pi_n\circ\gamma_\ell=\frac{b_n}{b_\ell},\qquad
 \frac{s}{2a_n t}\circ\gamma_n(\zeta)=\zeta.
\end{equation}
The first identity separates the different curves; the second recovers
the original parameter on the $n$th curve.

Fix a smooth radial function $\vartheta:\C\to[0,1]$, equal to $1$
on $|\zeta|\le1/2$ and to $0$ on $|\zeta|\ge1$.  Define
\begin{equation}\label{eq:vk}
 v_n(s,t)=\rho_n\!\left(\frac{s}{2a_n t}\right)
 \vartheta\bigl(10n^{1/4}(\pi_n(s,t)-1)\bigr)\quad(t\ne0),
\end{equation}
and set $v_n=0$ on $t=0$.  Finally put
\begin{equation}\label{eq:epsrho}
 \varepsilon_n=\frac12b_n (2a_n)^{-38}.
\end{equation}
The exponent $38$ is twice $19$.  This is exactly what is needed below:
if the product $|\xi||\xi-1|$ is at most $(2a_n)^{-38}$, then one of the
two factors is at most $(2a_n)^{-19}$, which places
$\xi/(2a_n)$ inside one of the zero-neighborhoods of $\rho_n$ from
Lemma~\ref{lem:scalar}.

\begin{lemma}\label{lem:local}
For all sufficiently large $n$, the function $v_n$ is smooth on $\C^2$ and
\begin{equation}\label{eq:vzero}
 v_n=0\quad\text{on }\{|t|\le\varepsilon_n\},\qquad
 v_n\circ\gamma_\ell=
 \begin{cases}
 \rho_n,&\ell=n,\\
 0,&\ell\ne n,
 \end{cases}
\end{equation}
for every sufficiently large $\ell$.
\end{lemma}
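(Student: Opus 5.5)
The plan has three parts: (i) smoothness of $v_n$ away from $t=0$, (ii) vanishing on the slab $\{|t|\le\varepsilon_n\}$, which also gives smoothness across $t=0$, and (iii) the restriction identities on the discs $\gamma_\ell$.

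\emph{Smoothness off $\{t=0\}$.} On $\{t\ne0\}$ both $s/(2a_nt)$ and $\pi_n$ are real-analytic, and $\rho_n=h_{a_n}$ is smooth by Lemma~\ref{lem:scalar}. So $v_n$ is smooth there, and it suffices to prove the vanishing on $\{0<|t|\le\varepsilon_n\}$.

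\emph{Vanishing on the slab.} Let $0<|t|\le\varepsilon_n$ and suppose the cutoff factor is nonzero. Then $|\pi_n-1|<\frac1{10}n^{-1/4}$, so $|Q(s,t)|\asymp b_n^{-1}|t|^3$. Set $\xi=s/t$, so that $Q=t^2\xi(\xi-1)$. This gives
\[
|\xi||\xi-1|\asymp|t|/b_n\le\tfrac12(2a_n)^{-38}\cdot(1+o(1)).
\]
Hence one of $|\xi|$, $|\xi-1|$ is at most $(2a_n)^{-19}$. The argument of $\rho_n$ is $\zeta=\xi/(2a_n)$, so either $|\zeta|\le(2a_n)^{-20}=r_{a_n}$ or $|\zeta-\tau_n|\le(2a_n)^{-20}$. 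In both cases $\rho_n(\zeta)=0$ by \eqref{eq:scalarzero}. The cutoff and the factor $\rho_n$ therefore cannot both be nonzero, so $v_n=0$ on the slab.

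This is the step I expect to be the main obstacle, because the constants must be tracked carefully. The factor $1\pm\frac1{10}n^{-1/4}$ has to be absorbed by the factor $\frac12$ in $\varepsilon_n$, and the strict inequality is needed to land inside the radius $r_{a_n}$.

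Since $v_n$ vanishes on the open set $\{|t|<\varepsilon_n\}$, which contains $\{t=0\}$, it is smooth on all of $\C^2$.

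\emph{Restriction to $\gamma_\ell$.} By \eqref{eq:pi-discs}, $\pi_n\circ\gamma_\ell=b_n/b_\ell$ wherever $t\circ\gamma_\ell\ne0$.

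For $\ell=n$, this gives $\pi_n\equiv1$, so $\vartheta(0)=1$. Also $s/(2a_nt)=\zeta$ on $\gamma_n$, so $v_n\circ\gamma_n=\rho_n$ off the zeros of $f(2a_n\zeta)$. Those zeros are $\zeta=0$ and $\zeta=\tau_n$, where both sides vanish: $v_n=0$ at $t=0$ by definition, and $\rho_n=0$ there by \eqref{eq:scalarzero}.

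For $\ell\ne n$, take $\ell,n$ large. If $\ell>n$, then $b_n/b_\ell=e^{\ell^{3/4}-n^{3/4}}$. If $\ell<n$, it is $e^{-(n^{3/4}-\ell^{3/4})}$. In either case
\[
|\ell^{3/4}-n^{3/4}|\ge\tfrac34\max(\ell,n)^{-1/4}\ge c\,n^{-1/4}\cdot\min(1,(n/\ell)^{1/4}).
\]
I will check that this forces $|b_n/b_\ell-1|\ge\frac1{10}n^{-1/4}$, so that the cutoff vanishes. For $\ell<n$, the ratio is at most $e^{-\frac34 n^{-1/4}}\le 1-\frac12n^{-1/4}$. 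For $\ell>n$, the ratio is at least $1+|\ell^{3/4}-n^{3/4}|\ge 1+\frac34\ell^{-1/4}$; when $\ell\gg n$ the ratio is huge anyway.

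The case $\ell>n$ with $\ell$ near $n$ needs $\frac34\ell^{-1/4}\ge\frac1{10}n^{-1/4}$, i.e.\ $\ell\le(7.5)^4n$. Otherwise $\ell^{3/4}-n^{3/4}\ge1$ and the ratio exceeds $e$.

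Thus $\vartheta(10n^{1/4}(\pi_n-1))=0$ on $\gamma_\ell$ off $t=0$. At points with $t=0$, $v_n=0$ by definition. Hence $v_n\circ\gamma_\ell=0$.
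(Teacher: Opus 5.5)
Your proposal is correct and follows the paper's proof essentially step for step. The slab vanishing comes from $|\xi||\xi-1|=|t||\pi_n|/b_n<(2a_n)^{-38}$, which forces $\xi/(2a_n)$ into a zero disc of $\rho_n$, and the restriction identities come from $\pi_n\circ\gamma_\ell=b_n/b_\ell$. The only difference is that the paper separates the ratios using the neighbor bounds $|b_n/b_\ell-1|\ge\frac13 n^{-1/4}$ together with monotonicity of $b_j$. This avoids your case split for $\ell>n$. Also, the zero discs in Lemma~\ref{lem:scalar} are closed, so the strict inequality you mention is not actually needed.
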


\begin{proof}
There are two independent assertions: vanishing near the hyperplane
$t=0$, which gives smoothness there, and separation of the different
curves.

\smallskip
\noindent\emph{Vanishing near $t=0$.}
Suppose first that the cutoff in \eqref{eq:vk} is nonzero.  Then
\[
 |10n^{1/4}(\pi_n-1)|<1,
\]
so in particular $|\pi_n|\le2$ for large $n$.  If
$|t|\le\varepsilon_n$ and we write $\xi=s/t$, then
\[
 |f(\xi)|=|\xi(\xi-1)|
 =\frac{|Q(s,t)|}{|t|^2}
 =\frac{|t\pi_n(s,t)|}{b_n}
 \le \frac{2\varepsilon_n}{b_n}
 =(2a_n)^{-38}.
\]
Therefore
\[
 |\xi|\,|\xi-1|\le(2a_n)^{-38}.
\]
At least one of the two factors is at most $(2a_n)^{-19}$.  If
$|\xi|\le(2a_n)^{-19}$, then
\[
 \left|\frac{\xi}{2a_n}\right|\le(2a_n)^{-20};
\]
if $|\xi-1|\le(2a_n)^{-19}$, then
\[
 \left|\frac{\xi}{2a_n}-\frac1{2a_n}\right|
 \le(2a_n)^{-20}.
\]
In either case Lemma~\ref{lem:scalar} gives
\[
 \rho_n\!\left(\frac{\xi}{2a_n}\right)=0.
\]
Hence $v_n=0$.  If the cutoff itself is zero, then of course $v_n=0$ as
well.  Thus $v_n$ vanishes on the whole slab
$\{|t|\le\varepsilon_n\}$, proving both the first assertion in
\eqref{eq:vzero} and smoothness across $t=0$.

\smallskip
\noindent\emph{Separation of the curves.}
Because $b_j=e^{-j^{3/4}}$, neighboring ratios satisfy
\[
 \frac{b_n}{b_{n-1}}
 =\exp\bigl(-(n^{3/4}-(n-1)^{3/4})\bigr),
\]
and
\[
 \frac{b_n}{b_{n+1}}
 =\exp\bigl((n+1)^{3/4}-n^{3/4}\bigr).
\]
By the mean value theorem,
\[
 n^{3/4}-(n-1)^{3/4}
 =\frac34\xi_n^{-1/4}
\]
for some $\xi_n\in(n-1,n)$, and similarly for the second difference.
Thus both differences are comparable to $n^{-1/4}$.  For all sufficiently large $n$, both increments are bounded below by
$\frac23n^{-1/4}$.  Using $1-e^{-x}\ge x/2$ and $e^x-1\ge x$
for positive $x$ sufficiently small, we therefore obtain
\begin{equation}\label{eq:neighbor-ratios}
 1-\frac{b_n}{b_{n-1}}\ge\frac1{3n^{1/4}},\qquad
 \frac{b_n}{b_{n+1}}-1\ge\frac1{3n^{1/4}},\qquad
 1-\frac{b_{n+1}}{b_n}\ge\frac1{3n^{1/4}}.
\end{equation}
Monotonicity of $b_j$ then gives
\[
 \left|\frac{b_n}{b_\ell}-1\right|
 \ge\frac1{3n^{1/4}}
 \qquad(\ell\ne n).
\]
On $K_\ell\setminus\{0\}$, identity \eqref{eq:pi-discs} shows that the
argument of the cutoff is
\[
 10n^{1/4}\left(\frac{b_n}{b_\ell}-1\right).
\]
Its modulus is at least $10/3>1$, so the cutoff vanishes.  Hence
$v_n|_{K_\ell}=0$ for $\ell\ne n$.

On $K_n\setminus\{0\}$ we have $\pi_n=1$, so the cutoff is identically
one, while the second identity in \eqref{eq:pi-discs} gives
$s/(2a_nt)=\zeta$.  Consequently
\[
 v_n\circ\gamma_n=\rho_n
\]
away from the two parameters mapping to the origin.  At those two
parameters both sides vanish by Lemma~\ref{lem:scalar}.  This proves
\eqref{eq:vzero}.
\end{proof}

\section{Construction of the plurisubharmonic function}
The functions $v_n$ already have exactly the restrictions we want on the
polynomial curves.  Their defect is transverse: the cutoff involving
$\pi_n$ need not preserve plurisubharmonicity.  The correction will be
constructed in three stages.

First, we build a polynomial $T_n$ which vanishes on a whole block of
curves around $K_n$ but is uniformly large wherever the cutoff in $v_n$
can create a negative Levi direction.  Second, $\log|T_n|$ is combined
with a strictly plurisubharmonic radial term and smoothed by $\chi$; this
produces a nonnegative plurisubharmonic function $q_n$ which is zero near
the relevant curves and has a uniformly positive Levi form precisely
where it is needed.  Finally, the corrected local function is extended
to all of $\C^2$ by a smooth maximum with a logarithm.

The numerical parameters below are deliberately separated.  Their role
is not to optimize constants, but to make three different requirements
hold simultaneously: the correction must vanish on enough neighboring
curves, it must vanish on a ball containing all sufficiently later
curves, and all derivatives must grow subexponentially relative to the
final coefficient $2^{-n}$.

\subsection{Polynomials defining the curves}
Choose
\begin{equation}\label{eq:index-sets}
\begin{aligned}
 M_n&=\lceil400n^{1/4}\log n\rceil,\qquad N_n=2M_n,\\
 I_n&=\{n-M_n,\ldots,n+N_n\},\qquad m_n=|I_n|.
\end{aligned}
\end{equation}
Thus $m_n=3M_n+1=O(n^{1/4}\log n)$.  Set
\[
 S_n=R_{n-M_n},\qquad U_n=B(0,4S_n),
\]
and
\begin{equation}\label{eq:rk}
 r_n=b_{n+M_n}.
\end{equation}
The radius $S_n$ is large enough to contain every curve with index in
$I_n$, while $r_n$ will be the radius of a ball on which the correction
vanishes.  We now give the scale comparisons which justify these two
claims and which will also control derivatives after rescaling.

Define
\[
\begin{aligned}
 \Delta_n^-&=n^{3/4}-(n-M_n)^{3/4},\\
 \Delta_n^+&=(n+M_n)^{3/4}-n^{3/4},\\
 \Delta_n^*&=(n+N_n+1)^{3/4}-n^{3/4}.
\end{aligned}
\]
Since $M_n=o(n)$, Taylor expansion of $x^{3/4}$ at $n$ gives
\[
 (n+h)^{3/4}
 =n^{3/4}+\frac34n^{-1/4}h
 +O(n^{-5/4}h^2)
\]
uniformly for $|h|\le2M_n+1$.  Because
$M_n=400n^{1/4}\log n+O(1)$,
\[
 n^{-5/4}M_n^2=O(n^{-3/4}(\log n)^2)=o(1).
\]
Therefore
\begin{align*}
 \Delta_n^-&=300\log n+o(1),\\
 \Delta_n^+&=300\log n+o(1),\\
 \Delta_n^*&=600\log n+o(1).
\end{align*}
These three numbers measure, on the logarithmic scale, how much the
factors $b_j=e^{-j^{3/4}}$ change across the index block $I_n$.

Now
\[
 S_n=3b_{n-M_n}(n-M_n)^6,
\]
so
\[
 \frac{S_n}{b_n}
 =3(n-M_n)^6
 \exp\bigl(n^{3/4}-(n-M_n)^{3/4}\bigr)
 =3n^{306+o(1)}.
\]
Similarly,
\[
 \frac{r_n}{b_n}
 =\frac{b_{n+M_n}}{b_n}
 =e^{-\Delta_n^+}
 =n^{-300+o(1)}.
\]
Since $2a_n=n^2$, equation \eqref{eq:epsrho} reads
\[
 \frac{\varepsilon_n}{b_n}=\frac12n^{-76}.
\]
Thus
\[
 \frac{r_n}{\varepsilon_n}
 =2n^{-224+o(1)}\longrightarrow0,
\]
and in particular $r_n<\varepsilon_n$ for large $n$.
Finally,
\[
 \frac{R_{n+N_n+1}}{r_n}
 =3(n+N_n+1)^6
 \exp\bigl(-(n+N_n+1)^{3/4}+(n+M_n)^{3/4}\bigr).
\]
The exponent is
\[
 -(\Delta_n^*-\Delta_n^+)
 =-300\log n+o(1),
\]
so
\[
 \frac{R_{n+N_n+1}}{r_n}=3n^{-294+o(1)}\longrightarrow0.
\]
We have therefore proved the scale comparisons
\begin{equation}\label{eq:scale-comparisons}
 \frac{S_n}{b_n}\le Cn^6e^{\Delta_n^-},\qquad
 r_n<\varepsilon_n,\qquad
 R_{n+N_n+1}<r_n
\end{equation}
for all large $n$.

The meaning of these inequalities is worth keeping in mind.  Since the
radii $R_j$ are eventually decreasing, every $K_\ell$ with
$\ell\in I_n$ lies in $B(0,S_n)$.  On the other hand every sufficiently
later curve, namely every $K_\ell$ with $\ell>n+N_n$, lies inside
$B(0,r_n)$.  Finally $S_n/b_n$ is only polynomially large, which means
that the rescaled domain $b_n^{-1}U_n$ has polynomial radius.  This last
fact is what keeps the cutoff derivatives under control.

For $\ell\in I_n$ let
\[
 F_\ell(s,t)=t^3-b_\ell Q(s,t).
\]
On $K_\ell$, identity \eqref{eq:pi-discs} with $n=\ell$ says
$b_\ell Q/t^3=1$, hence $F_\ell=0$.  Define
\begin{equation}\label{eq:Tk}
 T_n(s,t)=e^4\left(\frac{20n^{1/4}}{\varepsilon_n^3}\right)^{m_n}
 \prod_{\ell\in I_n}F_\ell(s,t).
\end{equation}
Thus $T_n$ vanishes on every $K_\ell$ with $\ell\in I_n$.
The large prefactor is chosen so that the opposite happens on the
transition region of the cutoff in $v_n$: there $T_n$ will have modulus
at least $e^4$.

\begin{lemma}\label{lem:Tlower}
At every point where $v_n$ has a negative Levi direction,
$|T_n|\ge e^4$.
\end{lemma}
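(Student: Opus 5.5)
The plan is to locate the points where $v_n$ can fail to be plurisubharmonic, and then bound each factor $F_\ell$ of $T_n$ from below there, using the separation estimates \eqref{eq:neighbor-ratios}.

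\emph{Step 1: where negative Levi directions can occur.} On the open set $\{t\neq0\}$, the map $(s,t)\mapsto s/(2a_nt)$ is holomorphic. Since $\rho_n=h_{a_n}$ is smooth and subharmonic by Lemma~\ref{lem:scalar}, the function $\rho_n(s/(2a_nt))$ is plurisubharmonic there. Consider the open set where the argument $w=10n^{1/4}(\pi_n-1)$ of the cutoff satisfies $|w|<1/2$. There the cutoff is identically $1$, so $v_n$ is plurisubharmonic. On the open set where $|w|>1$, we have $v_n\equiv0$. On the slab $\{|t|<\varepsilon_n\}$ we also have $v_n\equiv0$, by Lemma~\ref{lem:local}. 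The Levi form of $v_n$ is continuous, so it is positive semidefinite on the closures of these sets as well. In particular it is positive semidefinite on $\{|t|\le\varepsilon_n\}$ and on $\{|w|\le1/2\}\cup\{|w|\ge1\}$. Hence a point carrying a negative Levi direction satisfies
\[
|t|>\varepsilon_n,\qquad \frac1{20n^{1/4}}<|\pi_n(s,t)-1|<\frac1{10n^{1/4}}.
\]

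\emph{Step 2: lower bound for each factor.} At such a point, write $F_\ell=t^3\bigl(1-\tfrac{b_\ell}{b_n}\pi_n\bigr)$ and put $x=n^{-1/4}$. We split into three cases.

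For $\ell=n$ the bracket equals $1-\pi_n$, and its modulus exceeds $x/20$.

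For $\ell<n$ we have $b_\ell/b_n>1$. By \eqref{eq:neighbor-ratios} and monotonicity, $|b_n/b_\ell-1|\ge x/3$. Therefore
\[
\Bigl|1-\tfrac{b_\ell}{b_n}\pi_n\Bigr|=\tfrac{b_\ell}{b_n}\Bigl|\tfrac{b_n}{b_\ell}-\pi_n\Bigr|
\ge \tfrac{b_n}{b_\ell}-1\ \text{(in modulus)}-|\pi_n-1|\ \ge\ \tfrac{x}{3}-\tfrac{x}{10}>\tfrac{x}{20}.
\]

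For $\ell>n$ we use $b_\ell/b_n\le b_{n+1}/b_n\le1-x/3$, again by \eqref{eq:neighbor-ratios}. Together with $|\pi_n|\le1+x/10$ this gives
\[
\Bigl|1-\tfrac{b_\ell}{b_n}\pi_n\Bigr|\ge1-\Bigl(1-\tfrac x3\Bigr)\Bigl(1+\tfrac x{10}\Bigr)\ge\tfrac{x}{5}>\tfrac{x}{20}
\]
for large $n$.

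In all three cases we obtain $|F_\ell|>\varepsilon_n^3/(20n^{1/4})$.

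\emph{Step 3: conclusion.} Multiply the $m_n$ factor bounds over $\ell\in I_n$ and insert the prefactor in \eqref{eq:Tk}. This gives
\[
|T_n|\ge e^4\Bigl(\tfrac{20n^{1/4}}{\varepsilon_n^3}\Bigr)^{m_n}\Bigl(\tfrac{\varepsilon_n^3}{20n^{1/4}}\Bigr)^{m_n}=e^4 .
\]

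The main obstacle is Step 1. One must make sure that negative Levi directions genuinely occur only on the transition annulus of the cutoff, and only off the slab. Boundary points of the slab and of the annulus are handled by the continuity argument above.
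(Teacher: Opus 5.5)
Your proposal is correct and follows the paper's proof essentially step for step: you confine negative Levi directions to the transition annulus of the cutoff off the slab $\{|t|<\varepsilon_n\}$, bound each factor $\bigl|1-\tfrac{b_\ell}{b_n}\pi_n\bigr|$ below by $1/(20n^{1/4})$ in the three cases $\ell=n$, $\ell<n$, $\ell>n$ via \eqref{eq:neighbor-ratios}, and multiply. The continuity argument you use to get strict inequalities is harmless but unnecessary, since the closed conditions $|t|\ge\varepsilon_n$ and $\frac1{20n^{1/4}}\le|\pi_n-1|\le\frac1{10n^{1/4}}$, which is what the paper uses, already yield $|T_n|\ge e^4\,(|t|/\varepsilon_n)^{3m_n}\ge e^4$.
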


\begin{proof}
The only source of possible non-plurisubharmonicity in $v_n$ is the
cutoff
\[
 \vartheta\bigl(10n^{1/4}(\pi_n-1)\bigr).
\]
If this cutoff is locally constant, then $v_n$ is either zero or a
nonnegative constant multiple of
$\rho_n(s/(2a_nt))$.  The latter is plurisubharmonic because $\rho_n$ is
subharmonic and $s/(2a_nt)$ is holomorphic where $t\ne0$.  Hence a
negative Levi direction can occur only in the transition annulus of the
cutoff.  By Lemma~\ref{lem:local} it can also occur only where
$|t|\ge\varepsilon_n$.  Thus at such a point
\begin{equation}\label{eq:annulus}
 \frac1{20n^{1/4}}
 \le|\pi_n-1|
 \le\frac1{10n^{1/4}}.
\end{equation}

We claim that for every $\ell\in I_n$,
\[
 \left|1-\frac{b_\ell}{b_n}\pi_n\right|
 \ge\frac1{20n^{1/4}}.
\]
For $\ell=n$ this is exactly the first inequality in
\eqref{eq:annulus}.  Suppose $\ell<n$.  Then $b_\ell/b_n\ge1$, and
\eqref{eq:neighbor-ratios}, together with monotonicity, gives
\[
 1-\frac{b_n}{b_\ell}\ge\frac1{3n^{1/4}}.
\]
Hence
\begin{align*}
 \left|1-\frac{b_\ell}{b_n}\pi_n\right|
 &=\frac{b_\ell}{b_n}
   \left|\frac{b_n}{b_\ell}-\pi_n\right|\\
 &\ge\frac{b_\ell}{b_n}
 \left(1-\frac{b_n}{b_\ell}-|\pi_n-1|\right)\\
 &\ge \frac1{3n^{1/4}}-\frac1{10n^{1/4}}
 \ge\frac1{20n^{1/4}}.
\end{align*}
If $\ell>n$, then $b_\ell/b_n\le b_{n+1}/b_n$.  The third estimate in
\eqref{eq:neighbor-ratios} therefore gives the separate bound
\[
 1-\frac{b_\ell}{b_n}
 \ge1-\frac{b_{n+1}}{b_n}
 \ge\frac1{3n^{1/4}}.
\]
Consequently
\[
 \left|1-\frac{b_\ell}{b_n}\pi_n\right|
 \ge 1-\frac{b_\ell}{b_n}
       -\frac{b_\ell}{b_n}|\pi_n-1|
 \ge\frac1{20n^{1/4}}.
\]

Since
\[
 F_\ell=t^3\left(1-\frac{b_\ell}{b_n}\pi_n\right),
\]
every factor satisfies
\[
 |F_\ell|\ge\frac{|t|^3}{20n^{1/4}}.
\]
Substituting this into \eqref{eq:Tk},
\[
 |T_n|
 \ge e^4
 \left(\frac{20n^{1/4}}{\varepsilon_n^3}\right)^{m_n}
 \left(\frac{|t|^3}{20n^{1/4}}\right)^{m_n}
 =e^4\left(\frac{|t|}{\varepsilon_n}\right)^{3m_n}.
\]
Because $|t|\ge\varepsilon_n$, the last expression is at least $e^4$.
\end{proof}

\subsection{Construction of a plurisubharmonic function near the curves}
We next need two opposite estimates for the same polynomial $T_n$:
it must be very small on $B(0,r_n)$, but its derivatives on the much
larger ball $U_n$ must not grow too quickly.  Both are easiest after
rescaling by the natural size $b_n$ of the $n$th curve.

Write
\[
 (s,t)=b_n(x_1,x_2).
\]
Since $Q(b_nx_1,b_nx_2)=b_n^2x_1(x_1-x_2)$, formula
\eqref{eq:Tk} becomes
\begin{equation}\label{eq:Tscaled}
 T_n(b_nx)=e^4\left(20n^{1/4}
 \left(\frac{b_n}{\varepsilon_n}\right)^3\right)^{m_n}
 \prod_{\ell\in I_n}
 \left(x_2^3-\frac{b_\ell}{b_n}x_1(x_1-x_2)\right).
\end{equation}
Each factor has terms of degree $2$ and $3$.  Consequently every
monomial of the product has degree between $2m_n$ and $3m_n$.

For a polynomial $P(x)=\sum_\beta c_\beta x^\beta$ write
\[
 \|P\|_{\mathrm{coef}}=\sum_\beta|c_\beta|.
\]
This norm is submultiplicative.  For $\ell\in I_n$ we have
$\ell\ge n-M_n$, and hence
\[
 \frac{b_\ell}{b_n}
 =\exp(n^{3/4}-\ell^{3/4})
 \le e^{\Delta_n^-}.
\]
The coefficient norm of one factor in \eqref{eq:Tscaled} is therefore
at most $1+2e^{\Delta_n^-}\le Ce^{\Delta_n^-}$.  Also
\[
 \frac{b_n}{\varepsilon_n}=2(2a_n)^{38}=2n^{76},
\]
so its logarithm is $O(\log n)$.  Taking logarithms in
\eqref{eq:Tscaled} gives
\begin{equation}\label{eq:Tcoef}
 \log\|T_n(b_n\cdot)\|_{\mathrm{coef}}
 \le C m_n(\log n+\Delta_n^-)
 =O\bigl(n^{1/4}(\log n)^2\bigr).
\end{equation}

We now evaluate $T_n$ on the smaller ball $B(0,r_n)$.  In rescaled
coordinates this is
\[
 \|x\|\le\frac{r_n}{b_n}=e^{-\Delta_n^+}.
\]
On this ball
\[
 |x_2|^3\le e^{-3\Delta_n^+},
\]
while
\[
 \frac{b_\ell}{b_n}|x_1(x_1-x_2)|
 \le C e^{\Delta_n^-}e^{-2\Delta_n^+}.
\]
The latter bound dominates for large $n$, so every factor in
\eqref{eq:Tscaled} has modulus at most
\[
 C e^{\Delta_n^- -2\Delta_n^+}.
\]
Consequently
\[
 |T_n(b_nx)|\le e^4\left[
 C n^{1/4}\left(\frac{b_n}{\varepsilon_n}\right)^3
 e^{\Delta_n^- -2\Delta_n^+}\right]^{m_n}.
\]
Here
\[
 n^{1/4}\left(\frac{b_n}{\varepsilon_n}\right)^3
 =8n^{1/4+228}=8n^{228\frac14},
\]
and
\[
 e^{\Delta_n^- -2\Delta_n^+}
 =n^{-300+o(1)}.
\]
Thus the expression in square brackets is
\[
 n^{-71\frac34+o(1)},
\]
which tends to zero rapidly.  Since $m_n\to\infty$, this gives
\begin{equation}\label{eq:Tsmall}
 |T_n|\le e^{-2}\qquad\text{on }B(0,r_n)
\end{equation}
for all sufficiently large $n$.

For derivatives we use a standard elementary estimate.  If $P$ is a
polynomial of degree at most $d$ on a ball of radius $R\ge1$, then for
every fixed $m$
\[
 \|P\|_{C^m(B(0,R))}
 \le C_m(1+d)^m\|P\|_{\mathrm{coef}}(1+R)^d.
\]
In our case
\[
 b_n^{-1}U_n=B\!\left(0,\frac{4S_n}{b_n}\right),
 \qquad \frac{4S_n}{b_n}\le Cn^6e^{\Delta_n^-}=n^{306+o(1)},
\]
and
\[
 \deg T_n\le3m_n=O(n^{1/4}\log n).
\]
Combining these facts with \eqref{eq:Tcoef}, and taking logarithms of
the right-hand side, gives only
$O_m(n^{1/4}(\log n)^2)$.  Hence
\begin{equation}\label{eq:Tderivs}
 \|T_n(b_n\cdot)\|_{C^m(b_n^{-1}U_n)}
 \le \exp\bigl(C_mn^{1/4}(\log n)^2\bigr)
\end{equation}
for every fixed $m$.

We also need derivative bounds for $v_n$.  On the support of its cutoff
inside $U_n$, write $\xi=s/t$.  Since $|\pi_n|\le2$ there,
\[
 |f(\xi)|=\frac{|t\pi_n|}{b_n}
 \le\frac{8S_n}{b_n}\le n^{307}
\]
for large $n$.  From $|\xi(\xi-1)|\le n^{307}$ we obtain
$|\xi|\le Cn^{154}$.  Lemma~\ref{lem:scalar} therefore gives
\begin{equation}\label{eq:vsmall}
 \|v_n\|_{C^0(U_n)}\le n^C.
\end{equation}

For positive derivatives it is important to work in the rescaled
variables.  There $v_n(b_nx)$ vanishes whenever
\[
 |x_2|\le\frac{\varepsilon_n}{b_n}
 =\frac12(2a_n)^{-38}.
\]
Thus, on the region where derivatives can be nonzero,
\[
 |x_2|^{-1}\le2(2a_n)^{38}=2n^{76}.
\]
Moreover $\|x\|\le4S_n/b_n\le n^C$.  The two rational functions which
occur in \eqref{eq:vk} become
\[
 \frac{x_1}{2a_nx_2},\qquad
 \frac{x_1(x_1-x_2)}{x_2^3}
 =\frac{x_1^2}{x_2^3}-\frac{x_1}{x_2^2}.
\]
After differentiating a fixed number $m$ of times, every term is a
constant multiple of
\[
 (2a_n)^{-1}\frac{x_1^j}{x_2^p}
 \quad\text{or}\quad
 \frac{x_1^j}{x_2^p},
\]
with $0\le j\le2$ and $p\le m+3$.  The preceding polynomial bounds on
$\|x\|$ and $|x_2|^{-1}$ show that all these derivatives are bounded by
$n^{C_m}$.  The derivatives of $\rho_n=h_{a_n}$ are polynomially bounded
by \eqref{eq:scalarbounds}.  The product and chain rules therefore give
\begin{equation}\label{eq:vderivs}
 \|v_n(b_n\cdot)\|_{C^m(b_n^{-1}U_n)}
 \le n^{C_m}
\end{equation}
for every fixed $m\ge0$.

We can now correct the Levi form.  Define, away from the zero set of
$T_n$,
\[
 \psi_n(z)=\frac{\|z\|^2}{S_n^2}+\log|T_n(z)|^2,
\]
and set
\[
 q_n(z)=
 \begin{cases}
    \chi(\psi_n(z)),&T_n(z)\ne0,\\
    0,&T_n(z)=0.
 \end{cases}
\]
The function $\psi_n$ is plurisubharmonic off $\{T_n=0\}$: the first
term is strictly plurisubharmonic and $\log|T_n|^2$ is
plurisubharmonic.  Since $\chi$ is convex and nondecreasing,
$\chi\circ\psi_n$ is plurisubharmonic there.

Near every zero of $T_n$, the logarithm tends to $-\infty$, so
$\psi_n<-1$ on a punctured neighborhood.  Because $\chi=0$ on
$(-\infty,-1]$, $q_n$ is identically zero on a full neighborhood of
each zero.  This proves smoothness across the zero set and also shows
that $q_n$ vanishes near every $K_\ell$ with $\ell\in I_n$.
Furthermore, on $B(0,r_n)$, estimate \eqref{eq:Tsmall} gives
$\log|T_n|^2\le-4$, while $r_n<S_n$ gives
$\|z\|^2/S_n^2<1$.  Hence $\psi_n<-3<-1$ and
\begin{equation}\label{eq:qsmall}
 q_n=0\quad\text{on }B(0,r_n).
\end{equation}

At a point where $v_n$ has a negative Levi direction,
Lemma~\ref{lem:Tlower} gives $|T_n|\ge e^4$.  Thus
\[
 \psi_n\ge\log|T_n|^2\ge8>1,
\]
so $\chi(\psi_n)=\psi_n$ in a neighborhood of that point.  Since
$\log|T_n|^2$ is pluriharmonic where $T_n\ne0$, the Levi form there is
exactly
\begin{equation}\label{eq:qlevi}
 \LL_{q_n}(z;X)=S_n^{-2}\|X\|^2.
\end{equation}
On the other hand, \eqref{eq:vderivs} with $m=2$, after returning from
$x$ to $z=b_nx$, shows that the negative part of the Levi form of $v_n$
on $U_n$ is bounded by
\[
 Cb_n^{-2}n^C\|X\|^2.
\]
To dominate this by $\lambda_n\LL_{q_n}$ it is enough to require
\[
 \lambda_nS_n^{-2}\ge Cb_n^{-2}n^C,
\]
or
\[
 \lambda_n\ge Cn^C\left(\frac{S_n}{b_n}\right)^2.
\]
Since $S_n/b_n\le n^{307}$, the right-hand side is still a fixed power
of $n$.  We may therefore choose
\begin{equation}\label{eq:lambdan}
 1\le\lambda_n\le n^C
\end{equation}
so that
\begin{equation}\label{eq:Hn}
 H_n=v_n+\lambda_nq_n
\end{equation}
is smooth and plurisubharmonic on $U_n$.  Because $q_n$ vanishes near
all $K_\ell$, $\ell\in I_n$, the correction does not change the
prescribed restrictions there.  It also vanishes on $B(0,r_n)$.

We now give the value and derivative estimates needed later.  On
$U_n=B(0,4S_n)$ we have $\|z\|^2/S_n^2\le16$.  Moreover
\eqref{eq:Tderivs} with $m=0$ bounds $\log^+|T_n|$ by a fixed power of
$n$.  Since
\[
 0\le\chi(x)\le\max\{x,0\}+\frac12,
\]
it follows that
\begin{equation}\label{eq:q-bound}
 0\le q_n\le n^C,
 \qquad
 \|H_n\|_{C^0(U_n)}\le n^C.
\end{equation}

For higher derivatives, consider a point where $q_n$ or one of its
derivatives is nonzero.  Then $\psi_n>-1$; at $\psi_n=-1$ all
derivatives of $\chi(\psi_n)$ vanish because $\chi$ is identically zero
to the left of $-1$.  Since $\|z\|<4S_n$,
\[
 \log|T_n(z)|^2
 >-1-\frac{\|z\|^2}{S_n^2}>-17,
\]
and hence
\[
 |T_n(z)|>e^{-17/2}.
\]
On this set every derivative of $\log|T_n|^2$ of a fixed order is a
finite sum of products of derivatives of $T_n$ and $\overline{T_n}$,
divided by powers of $T_n$ and $\overline{T_n}$.  The denominators are
therefore uniformly bounded.  In rescaled coordinates,
\eqref{eq:Tderivs} and the chain rule give
\[
 \|q_n(b_n\cdot)\|_{C^m(b_n^{-1}U_n)}
 \le \exp\bigl(C_m n^{1/4}(\log n)^2\bigr).
\]
Returning from $x$ to $z=b_nx$ multiplies an $m$th derivative by at
most
\[
 b_n^{-m}=e^{mn^{3/4}}.
\]
This term dominates the smaller
$\exp(C_mn^{1/4}(\log n)^2)$ contribution.  Together with
\eqref{eq:vderivs} and $\lambda_n\le n^C$, we conclude that, for every
fixed $m\ge0$,
\begin{equation}\label{eq:Hderivs}
 \|H_n\|_{C^m(U_n)}\le e^{C_m n^{3/4}}.
\end{equation}

\subsection{Extension to the whole space}
The function $H_n$ is defined only on $U_n=B(0,4S_n)$.  We now extend it
to $\C^2$ without changing it near the curves.  The extension uses a
smooth maximum with a logarithmic plurisubharmonic function.

Define
\[
 \mathcal M(x,y)=x+\chi(y-x).
\]
Because $\chi(u)=0$ for $u\le-1$ and $\chi(u)=u$ for $u\ge1$,
\[
 \mathcal M(x,y)=x\quad\text{if }y-x\le-1,
 \qquad
 \mathcal M(x,y)=y\quad\text{if }y-x\ge1.
\]
If $f$ and $g$ are smooth plurisubharmonic functions, then
\begin{align*}
 \LL_{\mathcal M(f,g)}(X)
 &=(1-\chi'(g-f))\LL_f(X)+\chi'(g-f)\LL_g(X)\\
 &\quad+\chi''(g-f)|\partial(g-f)(X)|^2\ge0.
\end{align*}
Thus $\mathcal M(f,g)$ is again plurisubharmonic.

Put
\[
 A_n=4\bigl(1+\|H_n\|_{C^0(U_n)}\bigr)\le n^C
\]
and, for $z\ne0$,
\[
 \ell_n(z)=A_n\log\frac{\|z\|}{2S_n}.
\]
Define
\begin{equation}\label{eq:uk}
 u_n(z)=
 \begin{cases}
  \mathcal M(H_n(z),\ell_n(z)),&0<\|z\|<4S_n,\\
  \ell_n(z),&\|z\|\ge4S_n,\\
  0,&z=0.
 \end{cases}
\end{equation}

\begin{proposition}\label{prop:uk}
For all sufficiently large $n$, $u_n\in C^\infty(\C^2)\cap
\psh(\C^2)$ and $u_n\ge -n^C$. Moreover,
\begin{equation}\label{eq:uk-restriction}
 u_n\circ\gamma_n=\rho_n,\qquad
 u_n|_{K_\ell}=0\quad(\ell>n),\qquad
 u_n=0\quad\text{on }B(0,r_n).
\end{equation}
For every fixed $R>0$, $m\ge0$,
\begin{equation}\label{eq:uk-Cm}
 \|u_n\|_{C^m(B(0,R))}\le e^{C_{m,R}n^{3/4}}.
\end{equation}
Furthermore, for $j>n$ we have
\begin{equation}\label{eq:later-on-earlier}
 0\le u_j\le j^C\qquad\text{on }K_n.
\end{equation}
\end{proposition}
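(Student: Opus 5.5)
The proof rests on a single geometric fact. On the three regions $\{\|z\|<S_n\}$, $\{S_n\le\|z\|\le 3S_n\}$ and $\{3S_n<\|z\|<4S_n\}$, the smooth maximum $\mathcal M(H_n,\ell_n)$ reduces respectively to $H_n$, to a genuine smooth maximum of two smooth plurisubharmonic functions, and to $\ell_n$.

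\emph{The gluing regions.} Since $|H_n|\le A_n/4$ on $U_n$, we compare with $\ell_n$ on each region.
\begin{itemize}
\item On $\|z\|\le S_n$ we have $\ell_n\le A_n\log\frac12<-A_n/2$. Hence $\ell_n-H_n\le -A_n/4<-1$ and $u_n=H_n$.
\item On $3S_n\le\|z\|<4S_n$ we have $\ell_n\ge A_n\log\frac32>A_n/3$. Hence $\ell_n-H_n\ge A_n/12\ge 1$ once $A_n\ge12$, so $u_n=\ell_n$. (If needed, I enlarge the constant in $A_n$, which keeps $A_n\le n^C$.)
\end{itemize}

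\emph{Smoothness and plurisubharmonicity.} The two formulas in \eqref{eq:uk} therefore agree on an open shell, and $u_n$ is smooth away from $0$. Near $0$ it equals $H_n$, which is smooth on $U_n$. Plurisubharmonicity follows from three local facts:
\begin{itemize}
\item $H_n$ is plurisubharmonic on $U_n$, by the choice of $\lambda_n$ in \eqref{eq:Hn};
\item $\ell_n$ is plurisubharmonic and smooth off $0$;
\item the Levi form identity for $\mathcal M$ applies on the middle shell, where both functions are smooth.
\end{itemize}

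\emph{Lower bound.} On $\|z\|<4S_n$ we have $u_n\ge H_n\ge -n^C$ by \eqref{eq:q-bound}, using $\mathcal M(x,y)\ge x$ since $\chi\ge0$. For $\|z\|\ge 4S_n$ we have $\ell_n\ge A_n\log2>0$.

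\emph{Restrictions \eqref{eq:uk-restriction}.} All three hold because each set lies in $B(0,S_n)$, where $u_n=H_n=v_n+\lambda_nq_n$ and $q_n$ vanishes on the relevant curves.
\begin{itemize}
\item \emph{The curve $K_n$.} It lies in $B(0,R_n)\subset B(0,S_n)$. There $q_n=0$, so $u_n\circ\gamma_n=v_n\circ\gamma_n=\rho_n$ by Lemma~\ref{lem:local}.
\item \emph{Curves with $n<\ell\le n+N_n$.} These lie in $I_n$, so $q_n=0$ near $K_\ell$ and $v_n|_{K_\ell}=0$.
\item \emph{Curves with $\ell>n+N_n$.} These lie in $B(0,R_{n+N_n+1})\subset B(0,r_n)$ by \eqref{eq:scale-comparisons}, using that $R_j$ is eventually decreasing.
\item \emph{The ball $B(0,r_n)$.} Since $r_n<\varepsilon_n$, it lies in the slab $|t|\le\varepsilon_n$, where $v_n=0$ by \eqref{eq:vzero}. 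Also $q_n=0$ there by \eqref{eq:qsmall}.
\end{itemize}

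\emph{Derivative bound \eqref{eq:uk-Cm}.} On $B(0,4S_n)$, $u_n$ is a composition of $\chi$ with $H_n$ and $\ell_n$. We use \eqref{eq:Hderivs} for $H_n$. For $\ell_n$ on $\|z\|\ge S_n$, the $m$th derivatives are bounded by $A_nS_n^{-m}\le n^Ce^{mn^{3/4}}$, because $S_n\ge b_n$. Outside $B(0,4S_n)$ the same logarithm estimate applies, and polynomially many factors are absorbed into $e^{C n^{3/4}}$. The dependence on $R$ enters only through $\log(R/2S_n)\le Cn^{3/4}$ in the $C^0$ bound.

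\emph{Later-on-earlier bound \eqref{eq:later-on-earlier}.} This is the step I expect to need the most care. Fix $j>n$ and a point of $K_n\subset B(0,R_n)$.
\begin{itemize}
\item \emph{Case $n\ge j-M_j$.} Then $n\in I_j$ and $K_n\subset B(0,S_j)$, so $u_j=H_j=v_j+\lambda_jq_j$. Here $v_j|_{K_n}=0$ by Lemma~\ref{lem:local} and $q_j=0$ near $K_n$, so $u_j=0$.
\item \emph{Case $n<j-M_j$.} Then $u_j$ may be either $\ell_j$ or the smooth maximum. I use $\mathcal M(x,y)\le\max(x,y)+\tfrac12$ and $\mathcal M\ge x$.
  \begin{itemize}
  \item On $B(0,S_j)$, $u_j=H_j$. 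On $K_n\cap B(0,S_j)$ we have $v_j=0$ by separation, which holds for all $\ell\ne j$. Also $0\le q_j\le j^C$, so $0\le u_j\le j^C$.
  \item Where $\|z\|\ge 3S_j$, $u_j=\ell_j>0$. And $\ell_j\le A_j\log(R_n/2S_j)$, which is at most $j^C\cdot j^{3/4}$ because $R_n$ is bounded and $S_j\ge b_j=e^{-j^{3/4}}$.
  \item In the middle shell, both lower bounds $H_j\ge0$ and $\ell_j\ge A_j\log\tfrac12$ must be checked. Nonnegativity follows from $u_j\ge H_j\ge v_j+\lambda_jq_j\ge0$, since $v_j=0$ on $K_n$ and $q_j\ge0$.
  \end{itemize}
\end{itemize}
Hence $0\le u_j\le j^C$ on $K_n$.
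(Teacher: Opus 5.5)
Your proposal is correct and follows essentially the same route as the paper: the same three-shell analysis of $\mathcal M(H_n,\ell_n)$, the restriction identities read off from $u_n=H_n$ on $B(0,S_n)$, and the bounds $\max\{H_j,\ell_j\}\le u_j\le\max\{H_j,\ell_j\}+\frac12$ combined with $H_j=\lambda_jq_j\ge0$ on $K_n$. Two cosmetic remarks. First, no enlargement of $A_n$ is needed: with $A_n=4(1+\|H_n\|_{C^0(U_n)})$ one has directly $A_n\log\frac32-\|H_n\|_{C^0(U_n)}\ge4\log\frac32>1$. Second, your case split $n\ge j-M_j$ versus $n<j-M_j$ in \eqref{eq:later-on-earlier} is unnecessary, because $v_j|_{K_n}=0$ and $q_j\ge0$ already give $H_j\ge0$ on all of $K_n$.
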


\begin{proof}
We first check that the smooth maximum agrees with the correct branch
near both ends of the annulus where the two formulas in
\eqref{eq:uk} meet.

If $0<\|z\|\le S_n$, then
\[
 \ell_n(z)\le-A_n\log2.
\]
Therefore
\[
 \ell_n(z)-H_n(z)
 \le-A_n\log2+\|H_n\|_{C^0(U_n)}<-1
\]
by the definition $A_n=4(1+\|H_n\|_\infty)$.  Hence
\[
 u_n=H_n\qquad(0<\|z\|\le S_n).
\]
In particular $u_n$ vanishes on $B(0,r_n)$ because $H_n$ does, so the
definition extends smoothly across $z=0$.

If $3S_n\le\|z\|<4S_n$, then
\[
 \ell_n(z)\ge A_n\log\frac32.
\]
Thus
\[
 \ell_n(z)-H_n(z)
 \ge A_n\log\frac32-\|H_n\|_{C^0(U_n)}>1,
\]
and consequently $u_n=\ell_n$ on this annulus.  This proves that the
inner and outer formulas in \eqref{eq:uk} agree smoothly.  Since both
$H_n$ and $\ell_n$ are plurisubharmonic on their common domain,
$u_n$ is globally smooth and plurisubharmonic.  Also
$H_n=v_n+\lambda_nq_n\ge -\|v_n\|\ge-n^C$ on $U_n$, whereas
$\ell_n>0$ outside $B(0,3S_n)$.  Hence $u_n\ge-n^C$ everywhere.

We next prove the restriction identities.  If $\ell\in I_n$, then
$K_\ell\subset B(0,S_n)$, so
\[
 u_n|_{K_\ell}=H_n|_{K_\ell}=v_n|_{K_\ell}.
\]
Lemma~\ref{lem:local} therefore gives
$u_n\circ\gamma_n=\rho_n$ and $u_n|_{K_\ell}=0$ for
$\ell\in I_n$, $\ell\ne n$.  If $\ell>n+N_n$, then the scale comparison
$R_{n+N_n+1}<r_n$ and monotonicity of $R_j$ give
\[
 K_\ell\subset B(0,R_\ell)\subset B(0,r_n),
\]
where $u_n=0$.  Hence $u_n|_{K_\ell}=0$ for every $\ell>n$.
This proves \eqref{eq:uk-restriction}.

For the $C^m$ estimate, we already know from \eqref{eq:Hderivs} that
\[
 \|H_n\|_{C^m(U_n)}\le e^{C_mn^{3/4}}.
\]
On the annulus $S_n\le\|z\|\le4S_n$, differentiation of the logarithm
gives, for $m\ge1$,
\[
 |D^m\ell_n(z)|\le C_m A_nS_n^{-m}.
\]
Now
\[
 \log S_n^{-1}=O(n^{3/4}),
\]
so, for fixed $m$, $S_n^{-m}\le e^{C_mn^{3/4}}$.  The derivatives of
$\chi$ are bounded, hence the chain rule applied to the smooth maximum
produces the same exponential bound.  Outside $B(0,3S_n)$ we have
$u_n=\ell_n$.  On the part of a fixed ball $B(0,R)$ where this formula
is used, namely $3S_n\le\|z\|\le R$,
\[
 |\ell_n(z)|\le C_R A_n(1+\log S_n^{-1})\le n^C,
\]
while its positive-order derivatives satisfy the preceding bound.
This proves \eqref{eq:uk-Cm}.

Finally fix $j>n$ and $z\in K_n$.  Since $j\ne n$, Lemma~\ref{lem:local}
gives $v_j(z)=0$, and therefore
\[
 H_j(z)=\lambda_jq_j(z)\ge0.
\]
If $\|z\|<4S_j$, then the smooth maximum satisfies
\[
 \max\{H_j(z),\ell_j(z)\}
 \le u_j(z)
 \le\max\{H_j(z),\ell_j(z)\}+\frac12.
\]
The first term is nonnegative.  The value estimate for $H_j$ on $U_j$
and $\ell_j(z)\le A_j\log2$ give $u_j(z)\le j^C$.  If instead
$\|z\|\ge4S_j$, then $u_j(z)=\ell_j(z)\ge0$.  Since
$K_n\subset B(0,1/2)$ for all sufficiently large $n$,
\[
 \ell_j(z)
 \le A_j\log\frac{1}{4S_j}
 \le j^C.
\]
This proves \eqref{eq:later-on-earlier}.
\end{proof}

\subsection{The infinite series}
The functions $u_n$ now have the two properties needed for summation.
For every fixed derivative order, their $C^m$ norms grow like
$e^{C_mn^{3/4}}$, which is subexponential in $n$ and hence dominated by
$2^n$.  At the same time, on each curve $K_n$, a long consecutive block
of the other functions vanishes exactly.  The latter property is what
makes the restriction error smaller than the principal term.

Fix $K$ sufficiently large for all preceding constructions and scale
comparisons, and also so that
\[
 K_n\subset B(0,1/2)\qquad(n\ge K).
\]
The index $K$ is chosen once and for all; it does not depend on the
derivative order.  For fixed $m$ and $R$, the threshold after which
\eqref{eq:uk-Cm} holds may depend on $m,R$, but discarding finitely many
terms never affects convergence.  Define
\begin{equation}\label{eq:u0}
 W_n=2^{-n}u_n,\qquad
 u_0=\sum_{n\ge K}W_n.
\end{equation}

\begin{proposition}\label{prop:interpolation}
The series \eqref{eq:u0} converges in $C^\infty$ on compact
subsets of $\C^2$. Its sum is smooth, plurisubharmonic and
flat at $0$. Furthermore,
\begin{equation}\label{eq:interpolation}
 \sup_{\zeta\in\overline\D}
 |u_0(\gamma_n(\zeta))-2^{-n}\rho_n(\zeta)|=o(2^{-n}).
\end{equation}
\end{proposition}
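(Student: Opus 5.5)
Convergence comes first. Fix a compact set, say $B(0,R)$, and a derivative order $m$. By \eqref{eq:uk-Cm}, for $n$ beyond a threshold depending on $m,R$ we have
\[
\|W_n\|_{C^m(B(0,R))}\le 2^{-n}e^{C_{m,R}n^{3/4}}.
\]
This is summable because $n^{3/4}=o(n)$. The finitely many earlier terms are smooth and do not affect convergence. Hence the series converges in every $C^m(B(0,R))$, and $u_0\in C^\infty(\C^2)$.

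Plurisubharmonicity follows since each $W_n$ is smooth and plurisubharmonic by Proposition~\ref{prop:uk}. The Levi forms converge locally uniformly, so $\LL_{u_0}=\sum_n\LL_{W_n}\ge0$.

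For flatness at $0$, each $u_n$ vanishes on the ball $B(0,r_n)$, so all derivatives of every $u_n$ vanish at the origin. The $C^m$-convergent sum then has vanishing derivatives of all orders at $0$.

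For \eqref{eq:interpolation}, fix $n\ge K$ and split
\[
u_0\circ\gamma_n=\sum_{K\le j<n}W_j\circ\gamma_n+W_n\circ\gamma_n+\sum_{j>n}W_j\circ\gamma_n .
\]
The middle term is exactly $2^{-n}\rho_n$ by \eqref{eq:uk-restriction}. The terms with $j>n$ are handled by \eqref{eq:later-on-earlier}:
\[
0\le \sum_{j>n}W_j\circ\gamma_n\le\sum_{j>n}2^{-j}j^C .
\]
This is not $o(2^{-n})$ as it stands, because the sum is of order $2^{-n}n^C$. A better estimate is needed, and I would reexamine the proof of Proposition~\ref{prop:uk}. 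For $j>n$ with $n\in I_j$, meaning $n\ge j-M_j$, we get $u_j|_{K_n}=0$: in that case $K_n\subset B(0,S_j)$, so $u_j=H_j$ there, and both $v_j$ and $q_j$ vanish on $K_n$. Hence only indices $j>n$ with $n<j-M_j$ contribute. Since $M_j\asymp j^{1/4}\log j\ge M_n$, these satisfy $j\ge n+M_n$, and
\[
\sum_{j\ge n+M_n}2^{-j}j^C\le 2^{-n}\,2^{-M_n}n^{C}=o(2^{-n}),
\]
because $2^{-M_n}=n^{-400\log 2\cdot n^{1/4}(1+o(1))}$ beats any fixed power of $n$.

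The terms with $j<n$ are the main obstacle. If $n\le j+N_j$, the restriction identities in Proposition~\ref{prop:uk} give $u_j|_{K_n}=0$; indeed they give $u_j|_{K_\ell}=0$ for every $\ell>j$. So all earlier terms vanish identically on $K_n$.

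The error is therefore only the tail over $j\ge n+M_n$, and \eqref{eq:interpolation} follows. The delicate point is justifying the vanishing on the block $n\in I_j$ with $n<j$. I would argue it exactly as in Proposition~\ref{prop:uk}: $K_n\subset B(0,R_n)\subset B(0,S_j)$ by monotonicity of $R$, and then Lemma~\ref{lem:local} together with the vanishing of $q_j$ near $K_\ell$ for $\ell\in I_j$ gives the claim.
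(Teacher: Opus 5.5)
Your proposal is correct and follows the paper's proof essentially step for step. You use summability from the subexponential bound \eqref{eq:uk-Cm} and flatness from the vanishing of each $u_n$ on $B(0,r_n)$; for the interpolation estimate you use the same index splitting, in which $j<n$ vanishes by \eqref{eq:uk-restriction}, the block $n<j\le n+M_n$ vanishes because $n\in I_j$ (a fact established inside the proof of Proposition~\ref{prop:uk}, as you note), and the tail $j>n+M_n$ is bounded via \eqref{eq:later-on-earlier} by $2^{-n}\,2^{-M_n}n^C=o(2^{-n})$. The only cosmetic difference is that you obtain plurisubharmonicity from $C^2$ convergence of the Levi forms, whereas the paper uses locally uniform convergence of plurisubharmonic functions.
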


\begin{proof}
Fix a ball $B(0,R)$ and an integer $m\ge0$.  By
\eqref{eq:uk-Cm},
\[
 \|W_n\|_{C^m(B(0,R))}
 \le\exp\bigl(-n\log2+C_{m,R}n^{3/4}\bigr).
\]
Since $n^{3/4}=o(n)$, the exponent is at most
$-\frac12n\log2$ for all sufficiently large $n$.  Thus the series of
$C^m$ norms converges.  As this is true for every fixed $m,R$, the sum
converges in $C^\infty$ on compact subsets.  Every $W_n$ is
plurisubharmonic, so the locally uniform limit is plurisubharmonic.
Moreover each $u_n$ vanishes on a neighborhood of the origin.  Termwise
differentiation of the convergent series therefore shows that every
derivative of $u_0$ at the origin is zero: $u_0$ is flat there.

It remains to estimate the restriction to $K_n$.  There are four ranges
of indices.  If $j<n$, then $u_j|_{K_n}=0$ by
\eqref{eq:uk-restriction}.  For $j=n$,
\[
 W_n\circ\gamma_n=2^{-n}\rho_n.
\]
Now suppose
\[
 n<j\le n+M_n.
\]
The function $M_j=\lceil400j^{1/4}\log j\rceil$ is eventually
nondecreasing, so $M_j\ge M_n$.  Therefore
\[
 j-M_j\le j-M_n\le n,
\]
and also $n<j<j+N_j$.  Hence $n\in I_j$.  Proposition~\ref{prop:uk}
then gives $u_j|_{K_n}=0$.  Thus only the tail $j>n+M_n$ contributes to
the error.

Using \eqref{eq:later-on-earlier},
\begin{align*}
 \frac{\sup_{K_n}|u_0-W_n|}{2^{-n}}
 &\le\sum_{j>n+M_n}2^{-(j-n)}j^C.
\end{align*}
For a polynomially growing sequence, a geometric tail is controlled by
its first term up to another polynomial factor, so
\[
 \sum_{j>n+M_n}2^{-(j-n)}j^C
 \le C(n+M_n)^C2^{-M_n}\longrightarrow0.
\]
This is exactly \eqref{eq:interpolation}.
\end{proof}

\section{The domain and its Kobayashi incompleteness}
We now assemble the preceding functions into a single smooth
plurisubharmonic potential and use the prescribed discs to produce a
finite-length chain approaching the boundary.  There are two points to
keep separate.  First, the global geometry of the domain must be
controlled: boundedness, smooth boundary, and pseudoconvexity.  Second,
the local interpolation error on the $n$th disc must be small compared
with the depth parameter $2^{-n}$ so that the corresponding analytic
disc really lies inside the domain.

Let $u_0$ be the function obtained in Proposition~\ref{prop:interpolation}.
By Proposition~\ref{prop:uk},
\[
 u_n\ge-n^C\quad\text{on }\C^2,
\]
so
\[
 u_0=\sum_{n\ge K}2^{-n}u_n
\]
is bounded below.  Increase the initial index $K$, if necessary, so
that $4S_n<1$ for every $n\ge K$.  Then for every $\|z\|\ge1$ we are in
the outer region of \eqref{eq:uk} for every $n$, and therefore
\[
 u_n(z)=A_n\log\frac{\|z\|}{2S_n}.
\]
Summing the series gives
\[
 u_0(z)=A\log\|z\|+B\qquad(\|z\|\ge1),
\]
where
\begin{equation}\label{eq:outside}
 \begin{aligned}
 A&=\sum_{n\ge K}2^{-n}A_n>0,\\
 B&=-\sum_{n\ge K}2^{-n}A_n\log(2S_n).
 \end{aligned}
\end{equation}
Both series converge.  Indeed $A_n\le n^C$, while
$\log(1/S_n)=O(n^{3/4})$, so the summands are a polynomial factor times
$2^{-n}$.

Choose $\mu>0$ so small that
\[
 \mu u_0(z)<1\qquad(\|z\|\le1),
\]
and set
\begin{equation}\label{eq:hartogs}
 \rho(s,t)=\mu u_0(s,t),\qquad
 \Omega=\{(s,t,w)\in\C^3:|w|^2+\rho(s,t)<1\}.
\end{equation}
We first verify the global properties of this domain.

Put
\[
 R=\exp\frac{1/\mu-B}{A}.
\]
Because $\mu u_0<1$ on $\|z\|\le1$ and the outer formula gives
$u_0(z)=A\log\|z\|+B$ at $\|z\|=1$, we have $\mu B<1$ and hence
$R>1$.  For $\|z\|\ge1$,
\[
 \rho(z)<1
 \quad\Longleftrightarrow\quad
 \mu(A\log\|z\|+B)<1
 \quad\Longleftrightarrow\quad
 \|z\|<R.
\]
Inside the unit ball the inequality holds by our choice of $\mu$.
Consequently
\begin{equation}\label{eq:base}
 \{(s,t)\in\C^2:\rho(s,t)<1\}=B(0,R).
\end{equation}
Thus the base of the Hartogs domain is bounded.  Since each fiber in the
$w$-variable is a disc centered at the origin, $\Omega$ is connected.
It is bounded as well: $\|(s,t)\|<R$, and since $u_0$ is bounded below,
\[
 |w|^2<1-\mu\inf_{\C^2}u_0.
\]

The function
\[
 r(s,t,w)=|w|^2+\rho(s,t)-1
\]
is a global smooth plurisubharmonic defining function.  We only need to
check that its differential does not vanish on the zero set.  If
$w\ne0$, then
\[
 \frac{\partial r}{\partial w}=\overline w\ne0.
\]
If $w=0$ and $r=0$, then \eqref{eq:base} gives $\|(s,t)\|=R>1$.
In that region
\[
 \rho(z)=\mu(A\log\|z\|+B),
\]
so, writing $z=(z_1,z_2)=(s,t)$,
\[
 \frac{\partial r}{\partial z_j}(z,0)
 =\frac{\mu A\overline z_j}{2\|z\|^2},
 \qquad j=1,2.
\]
At least one of these derivatives is nonzero.  Hence $dr\ne0$ on
$\partial\Omega$.  We have proved that $\Omega$ is bounded,
$C^\infty$-smoothly bounded, and pseudoconvex.

We now insert the analytic discs.  Set
\[
 \delta_n=\mu2^{-n}.
\]
Proposition~\ref{prop:interpolation} says that, uniformly for
$\zeta\in\overline\D$,
\[
 u_0(\gamma_n(\zeta))
 =2^{-n}\rho_n(\zeta)+o(2^{-n}).
\]
Multiplying by $\mu$ gives
\begin{equation}\label{eq:u-hartogs}
 \rho(\gamma_n(\zeta))
 =\delta_n\bigl(\rho_n(\zeta)+o(1)\bigr).
\end{equation}
The normal component of the disc is chosen to have precisely the same
scale $\delta_n$:
\begin{equation}\label{eq:normal-discs}
 \varphi_n(\zeta)=
 \left(\gamma_n(\zeta),
 1+\frac{\delta_n}{2}(-1+a_n\zeta)\right),
 \qquad\zeta\in\D.
\end{equation}
We check directly that $\varphi_n(\D)\subset\Omega$ for all large $n$.
Write
\[
 L_n(\zeta)=-1+a_n\zeta.
\]
Then
\begin{align*}
 \left|1+\frac{\delta_n}{2}L_n(\zeta)\right|^2-1
 &=\delta_n\Ree L_n(\zeta)
   +\frac{\delta_n^2}{4}|L_n(\zeta)|^2\\
 &=\delta_n(-1+a_n\Ree\zeta)
   +\frac{\delta_n^2}{4}|-1+a_n\zeta|^2.
\end{align*}
Combining this identity with \eqref{eq:u-hartogs},
\begin{align*}
 r(\varphi_n(\zeta))
 &=\delta_n\bigl(-1+a_n\Ree\zeta
   +\rho_n(\zeta)+o(1)\bigr)
   +\frac{\delta_n^2}{4}|-1+a_n\zeta|^2.
\end{align*}
Lemma~\ref{lem:scalar}, applied with $a=a_n$, gives
\[
 \rho_n(\zeta)\le1-a_n\Ree\zeta-\frac1{16}
 \qquad(|\zeta|\le1).
\]
The linear terms therefore cancel and leave a fixed negative margin:
\[
 -1+a_n\Ree\zeta+\rho_n(\zeta)\le-\frac1{16}.
\]
Moreover $|-1+a_n\zeta|=O(a_n)$ uniformly on $\overline\D$.  Hence
\[
 r(\varphi_n(\zeta))
 \le-\frac{\delta_n}{16}+o(\delta_n)
       +O(\delta_n^2a_n^2).
\]
Now
\[
 \delta_na_n^2
 =\mu2^{-n}\frac{n^4}{4}\longrightarrow0,
\]
so the last term is $o(\delta_n)$.  Therefore
\[
 r(\varphi_n(\zeta))<0
\]
uniformly on $\overline\D$ for all sufficiently large $n$.  This proves
$\varphi_n(\D)\subset\Omega$.

The reason for using the nodal curve is now visible.  Since
$\gamma_n(0)=\gamma_n(\tau_n)=0$, both endpoints of the disc lie on the
same complex normal line.  At $\zeta=0$,
\[
 \varphi_n(0)=\left(0,0,1-\frac{\delta_n}{2}\right).
\]
At $\zeta=\tau_n=(2a_n)^{-1}$ we have $a_n\tau_n=1/2$, so
\[
 \varphi_n(\tau_n)
 =\left(0,0,1+\frac{\delta_n}{2}\left(-1+\frac12\right)\right)
 =\left(0,0,1-\frac{\delta_n}{4}\right).
\]
Since $\delta_{n+1}=\delta_n/2$,
\[
 \varphi_n(\tau_n)=\varphi_{n+1}(0).
\]
Thus the discs concatenate exactly; no interpolation error enters the
endpoint matching.

Finally,
\[
 \tau_n=(2a_n)^{-1}=n^{-2}.
\]
Holomorphic maps decrease the Kobayashi distance, so
\[
 k_\Omega\bigl(\varphi_n(0),\varphi_{n+1}(0)\bigr)
 \le\rho_\D(0,\tau_n)
 =\operatorname{arctanh}(n^{-2}).
\]
Therefore
\[
 \sum_{n\ge n_0}
 k_\Omega\bigl(\varphi_n(0),\varphi_{n+1}(0)\bigr)
 \le\sum_{n\ge n_0}\operatorname{arctanh}(n^{-2})<\infty.
\]
The endpoint sequence is consequently Kobayashi-Cauchy.  In Euclidean
coordinates it is simply
\[
 \left(0,0,1-\frac{\mu}{2^{n+1}}\right),
\]
so it converges to $(0,0,1)\in\partial\Omega$.  Since $u_0$ is flat at
the origin, $d\rho(0)=0$, and therefore
\[
 dr_{(0,0,1)}=2\,d(\Ree w).
\]
Thus the endpoint sequence approaches along the inward real normal.

It remains only to rule out the possibility that this Kobayashi-Cauchy
sequence converges, in the Kobayashi metric, to an interior point.  Let
$\Omega\subset B(0,M)$.  Each coordinate function divided by $2M$ maps
$\Omega$ holomorphically into $\D$.  By Schwarz--Pick,
Kobayashi convergence in $\Omega$ implies convergence of every
coordinate, hence Euclidean convergence.  Our sequence has Euclidean
limit $(0,0,1)\notin\Omega$, so it has no Kobayashi limit in $\Omega$.
The Kobayashi distance is therefore incomplete.

The same sequence is Carath\'eodory-Cauchy because
$c_\Omega\le k_\Omega$.  Conversely the coordinate functions divided
by $2M$ belong to the defining family for the Carath\'eodory distance,
so Carath\'eodory convergence also implies Euclidean convergence.
Therefore the sequence has no Carath\'eodory limit in $\Omega$, and the
Carath\'eodory distance is incomplete as well.

Finally, because $\rho$ is flat at $(0,0)$, the holomorphic curve
\[
 \zeta\longmapsto(\zeta,0,1)
\]
has infinite order of contact with $\partial\Omega$ at $(0,0,1)$.
Indeed
\[
 r(\zeta,0,1)=\rho(\zeta,0)
\]
is flat at $\zeta=0$.  Thus the boundary point responsible for
incompleteness has infinite D'Angelo type.  This completes the proof of
Theorem~\ref{thm:main}.

\begin{remark}\label{rem:dichotomy}
The construction also shows concretely where the finite-type
self-improvement argument breaks down.  For the centers
\[
 z_n=\varphi_n(0)=\left(0,0,1-\frac{\delta_n}{2}\right),
 \qquad \delta_n=\mu2^{-n},
\]
we have $\rho(0)=0$, and hence the actual defining-function depth is
\[
 d_n:=-r(z_n)
 =1-\left(1-\frac{\delta_n}{2}\right)^2
 =\delta_n-\frac{\delta_n^2}{4}
 \asymp\delta_n.
\]
Since $2a_n=n^2$ and $\Gamma'(0)=(0,-1)$,
\[
 \gamma_n'(0)=(0,-b_nn^2),
 \qquad b_n=e^{-n^{3/4}}.
\]
Therefore
\[
 \|\varphi_n'(0)\|\ge b_nn^2=n^2e^{-n^{3/4}},
\]
and for every fixed $\varepsilon>0$,
\[
 \frac{\|\varphi_n'(0)\|}{d_n^\varepsilon}
 \ge c_\varepsilon n^2
 \exp\!\left(\varepsilon n\log2-n^{3/4}\right)
 \longrightarrow\infty.
\]
Thus, for every fixed $\varepsilon>0$, the centers of these discs violate every estimate of the form
\[
 \|f'(0)\|\le C(-r(f(0)))^\varepsilon.
\]
This is precisely
the first step of the finite-type self-improvement that fails in the
present infinite-type example.  At the same time,
\[
 \left|\sum_{j=1}^N r_{z_j}(z_n)(\varphi_n)'_j(0)\right|
 =\left(1-\frac{\delta_n}{2}\right)\frac{\delta_na_n}{2}
 \asymp \delta_n n^2
 \asymp \delta_n\log^2\frac1{\delta_n},
\]
whereas Proposition~\ref{prop:self-general} gives an upper bound of order
$\delta\log(1/\delta)$ for the normal derivative of an analytic disc,
and the sharper estimate in Remark~\ref{rem:sharp-normal} improves this to
$\delta\log(1/\delta)/\log\log(1/\delta)$.
The same contrast is visible at the distance level: here
\[
 k_\Omega(z_n,z_{n+1})\le\operatorname{arctanh}(n^{-2})=O(n^{-2}),
\]
while the first completeness estimate gives, for dyadically separated normal
depths in a finite-type domain, a lower bound of order $1/n$, and the sharper
estimate gives a lower bound of order $(\log n)/n$.
\end{remark}

\medskip
\noindent\textbf{Acknowledgements.}
The author thanks Ahmed Yekta \"{O}kten, Gautam Bharali, and
Nikolai Nikolov for discussions concerning finite-type estimates and
questions arising in the proof.

OpenAI ChatGPT was used to improve the exposition of the manuscript and
to assist in developing some of the mathematical arguments. The author
takes full responsibility for the mathematical content of the paper.

\end{document}